\let\OLDthebibliography\thebibliography
\renewcommand\thebibliography[1]{
  \OLDthebibliography{#1}
  \setlength{\parskip}{0pt}
  \setlength{\itemsep}{0pt plus 0.3ex}
}
\newtheorem{thm}{Theorem}[section]
\newtheorem{mainthm}{Theorem}
\newtheorem{lem}[thm]{Lemma}
\newtheorem{prop}[thm]{Proposition}
\newtheorem{cor}[thm]{Corollary}
\theoremstyle{definition}
\newtheorem{defn}[thm]{Definition}
\newtheorem{prob}[thm]{Problem} 
\newtheorem{rmk}[thm]{Remark}
\theoremstyle{remark}
\newcommand{\dmo}{\DeclareMathOperator}
\newcommand{\R}{\mathbb{R}}
\newcommand{\Q}{\mathbb{Q}}
\newcommand{\co}{\mathbb{C}}\newcommand{\Z}{\mathbb{Z}}
\newcommand{\al}{\alpha}\newcommand{\ga}{\gamma}\newcommand{\ep}{\epsilon}\newcommand{\ze}{\zeta}
\newcommand{\Om}{\Omega}\newcommand{\la}{\lambda}
\newcommand{\Ga}{\Gamma}
\newcommand{\what}{\widehat}\newcommand{\wtil}{\widetilde}\newcommand{\Lam}{\Lambda}
\newcommand{\cd}{\cdots}
\newcommand{\sbs}{\subset}\newcommand{\bs}{\backslash}\newcommand{\pa}{\partial}
\newcommand{\xra}{\xrightarrow}
\newcommand{\ra}{\rightarrow}
\newcommand{\hra}{\hookrightarrow}\newcommand{\onto}{\twoheadrightarrow}
\newcommand{\bb}[1]{\mathbb{#1}}\newcommand{\ca}[1]{\mathcal{#1}}\newcommand{\mf}{\mathfrak}
\newcommand{\fr}[2]{\frac{#1}{#2}}
\newcommand{\op}{\oplus}
\newcommand{\ti}{\times}
\newcommand{\pair}[1]{\langle #1 \rangle}
\dmo{\sgn}{sign}\dmo{\Span}{span}
\dmo{\we}{\wedge}
\dmo{\ind}{ind}\dmo{\Ind}{Ind}
\dmo{\bop}{\bigoplus}\dmo{\pic}{Pic}
\dmo{\coker}{coker}\dmo{\vol}{Vol}\dmo{\gal}{Gal}\dmo{\perm}{Perm}
\dmo{\tor}{Tor}\dmo{\ext}{Ext}\dmo{\Ext}{Ext}
\dmo{\aut}{aut}
\dmo{\Aut}{Aut}
\dmo{\inn}{Inn}\dmo{\var}{Var}
\dmo{\dep}{depth}
\dmo{\ad}{ad}\dmo{\curl}{curl}
\dmo{\hy}{\bb H}
\dmo{\Sl}{SL}
\dmo{\SO}{SO}\dmo{\psl}{PSL}
\dmo{\isom}{Isom}\dmo{\Isom}{Isom}
\dmo{\conf}{Conf}
\dmo{\stab}{Stab}\dmo{\Jac}{Jac }
\dmo{\diam}{diam}\dmo{\fix}{Fixed}\dmo{\Fix}{Fix}
\dmo{\injR}{injRad}\dmo{\Ad}{Ad}
\dmo{\esv}{ess-vol}\dmo{\out}{Out}\dmo{\Out}{Out}
\dmo{\nil}{Nil}\dmo{\sol}{Sol}
\dmo{\Div}{div}
\dmo{\SU}{SU}
\dmo{\SP}{SP}
\dmo{\Sp}{Sp}
\dmo{\rk}{rk}
\dmo{\rank}{rank}
\dmo{\psp}{PSp}\dmo{\psu}{PSU}
\dmo{\PU}{PU}\dmo{\pgl}{PGL}
\dmo{\Mod}{Mod}\dmo{\range}{Range}
\dmo{\eu}{eu}\dmo{\mi}{mi}
\dmo{\Log}{Log}\dmo{\supp}{supp}
\dmo{\maps}{Maps}\dmo{\Gr}{Gr}
\dmo{\Pin}{Pin}
\dmo{\Spin}{Spin}\dmo{\Str}{Str}
\dmo{\Sq}{Sq}\dmo{\Symp}{Symp}
\dmo{\pd}{PD}\dmo{\PD}{PD}\dmo{\sig}{Sig}
\dmo{\Set}{Set}\dmo{\Top}{Top}
\dmo{\ev}{ev}\dmo{\St}{St}
\dmo{\Pt}{Pt}\dmo{\pt}{pt}
\dmo{\colim}{colim }\dmo{\Pl}{PL}
\dmo{\String}{String}\dmo{\smear}{smear}
\dmo{\dev}{dev}
\dmo{\met}{Met}\dmo{\contact}{Contact}
\dmo{\teich}{Teich}\dmo{\Teich}{Teich}\dmo{\qi}{QI}
\dmo{\der}{Der}
\dmo{\cl}{Cliff}\dmo{\Cl}{Cl}
\dmo{\Pf}{Pf}
\dmo{\GL}{GL}
\dmo{\PSL}{PSL}
\dmo{\ch}{ch}\dmo{\diag}{diag}
\dmo{\grad}{grad}\dmo{\Char}{char}
\dmo{\spec}{Spec}\dmo{\Arg}{Arg}
\dmo{\rad}{rad}\dmo{\im}{Im}
\dmo{\Hom}{Hom}\dmo{\End}{End}
\dmo{\tr}{tr}\dmo{\id}{Id}
\dmo{\gl}{GL}
\dmo{\sym}{Sym}\dmo{\Sym}{Sym}
\dmo{\com}{Comm}
\dmo{\Lk}{Lk}
\dmo{\CAT}{CAT}
\dmo{\Rep}{Rep}
\dmo{\Res}{Res}
\dmo{\Conf}{Conf}
\dmo{\PConf}{PConf}
\dmo{\Push}{Push}
\dmo{\Cont}{Cont}
\dmo{\sm}{\setminus}
\dmo{\vn}{\varnothing}
\dmo{\disk}{\mathbb D}
\dmo{\Trd}{Trd}\dmo{\Mat}{Mat}
\dmo{\Riem}{Riem}
\dmo{\Diffn}{\Diff_0}\dmo{\diff}{diff}
\dmo{\Diff}{Diff}\dmo{\homeo}{Homeo}
\dmo{\Homeo}{Homeo}\dmo{\Fr}{Fr}
\dmo{\rot}{rot}\dmo{\Emb}{Emb}
\dmo{\Ham}{Ham}\dmo{\Met}{Met}
\dmo{\Ein}{Ein}\dmo{\CP}{\co P}
\dmo{\Per}{Per}\dmo{\Ric}{Ric}
\newcommand{\C}{\mathbb C}\dmo{\Nrd}{Nrd}
\dmo{\Comp}{Comp}\dmo{\PSC}{PSC}
\dmo{\Cent}{Cent}\dmo{\Orb}{Orb}
\dmo{\aind}{a-ind}\dmo{\tind}{t-ind}
\dmo{\constant}{constant}
\dmo{\Td}{Td}
\dmo{\LMod}{LMod}
\dmo{\SMod}{SMod}
\dmo{\SDiff}{SDiff}
\dmo{\Br}{Br}
\dmo{\csch}{csch}
\dmo{\triv}{triv}
\dmo{\genus}{genus}
\dmo{\Homeq}{HomEq}
\dmo{\PP}{\mathbb{P}}
\dmo{\U}{U}
\dmo{\Gal}{Gal}
\dmo{\BDiff}{\wtil{\Diff}}
\dmo{\BAut}{\wtil{\Aut}}
\dmo{\Iso}{Iso}
\dmo{\SL}{SL}
\dmo{\Cone}{Cone}
\dmo{\codim}{codim}
\dmo{\II}{II}
\dmo{\I}{I}
\dmo{\InjRad}{InjRad}
\dmo{\Inn}{Inn}
\dmo{\sys}{sys}
\dmo{\Comm}{Comm}
\dmo{\PO}{PO}
\dmo{\vertex}{Vert}
\dmo{\POm}{P\Om}
\dmo{\ab}{ab}
\dmo{\PSO}{PSO}
\dmo{\CRS}{CRS}
\dmo{\Tr}{Tr}
\newcommand{\mrt}{Millson--Raghunathan tuple}
\newcommand{\introthms}{\ref{thm:SL}--\ref{thm:ROS}}
\begin{document}

\title{Counting flat cycles in the homology of locally symmetric spaces}

\author{Daniel Studenmund and Bena Tshishiku}

\maketitle

\begin{abstract}
For a prime $p$, let $\Ga(p^\ell)<\SL_{n+1}(\Z)$ be a principal congruence subgroup and let $Y(p^\ell)$ denote its locally symmetric space. We prove that the subspace of $H_n(Y(p^\ell);\Q)$ generated by flat cycles grows at least as fast as as a linear function of $\vol(Y(p^\ell))^{\fr{n+1}{n^2+2n}}$. We also prove similar results for other locally symmetric spaces. This addresses a question of Avramidi--Nguyen-Phan.
\end{abstract}

\tableofcontents

\section{Introduction}\label{sec:introduction}

Let $G<\GL_d$ be an algebraic group defined over $\Q$ whose real points $G(\R)$ is a semisimple Lie group of noncompact type. It is a basic problem to understand the (co)homology of the arithmetic group $G(\Z)$ and its principal congruence subgroups
\[\Ga(s):=\ker\big[G(\Z)\ra G(\Z/s\Z)\big].\]
%In this paper we study the growth of the homology groups $H_n(\Ga(p^\ell);\Q)$, where $n$ is the real rank of $G(\R)$, $p$ is a fixed prime, and $\ell$ varies.

\begin{prob} \label{prob:growth}
For a fixed prime $p$ and $k\ge1$, determine the rate of growth of the homology groups $H_k(\Ga(p^\ell);\Q)$ as $\ell$ increases.
\end{prob}

Let $Y(p^\ell)=\Ga(p^\ell)\bs G(\R)/K$ denote the  locally symmetric space for $\Gamma(p^\ell)$. Then \[H_*(\Ga(p^\ell);\Q)\cong H_*(Y(p^\ell);\Q),\] and by the theory of $L^2$-Betti numbers, if $k\neq \frac{1}{2}\dim Y(p^\ell)$, then $H_k(Y(p^\ell);\Q)$ grows sublinearly in terms of the volume $\vol(Y(p^\ell))$, i.e.\ 
\begin{equation}\label{eqn:L2}\lim_{\ell\ra\infty}\fr{\dim H_k(Y(p^\ell);\Q)}{\vol(Y(p^\ell))}=0.\end{equation} %\cite{deGeorge-wallach}.
See e.g.\ \cite[\S0.6,\S5.2]{lueck-book} and \cite{Gaboriau}. When $k=\frac{1}{2}\dim Y(p^\ell)$ then $H_k(Y(p^\ell);\Q)$ grows linearly in the volume for certain $G$ \cite[op.\ cit.]{lueck-book}.

%In most cases, neither sublinear upper bounds nor nonzero lower bounds on the rate of growth of $H_k(Y(p^\ell);\Q)$ are known. 
No other general results are known about the growth of $H_k(Y(p^\ell);\Q)$ outside of Borel's stable range. Some sporadic results are known for $G(\R)=\SO(n,1)$ \cite{xue} and $G(\R)=U(n,m)$ \cite{marshall,marshall-shin,gerbelli-gauthier,stover}. 

Our main results provide lower bounds on the rate of growth of $H_n(Y(p^\ell);\Q)$ for certain $G$, where $n$ is the real rank of $G$. Precise statements follow.

% This problem is related to a conjecture of Sarnak--Xue. To explain this, recall that the cohomology of an arithmetic group can be described representation-theoretically by Matsushima's formula, and computing $H_i(Y(p^\ell);\C)$ reduces to computing the multiplicity $m(\pi,p^\ell)$ of certain representations $\pi$ occurring in $L^2(\Ga(p^\ell)\bs G)$. Sarnak--Xue \cite{sarnak-xue} conjectured upper bounds on the growth of these multiplicities, but this conjecture has been approached in only a small number of cases; for example, see the work of Marshall, Marshall--Shin, and Gerbelli-Gauthier \cite{marshall,marshall-shin,gerbelli-gauthier}, where the case $G=U(n,m)$ is studied using the theory of endoscopy. 
%It would be interesting to prove an analogue of Theorem \ref{thm:SL} for $G=U(n,m)$ and to compare it to the preceding results. 

% Using a more geometric approach, we prove the following. 

\begin{mainthm}\label{thm:SL}
Fix $n\ge2$ and let $G=\SL_{n+1}$. Denoting $\Ga(s)<\SL_{n+1}(\Z)$ and $Y(s)=\Ga(s)\bs \SL_{n+1}(\R)/\SO(n)$ as above, 
%For $s\ge1$ let \[\Ga(s)=\ker\big[\SL_{n+1}(\Z)\ra\SL_{n+1}(\Z/s\Z)\big]\] denote the principal congruence subgroup, and set $Y(s)=\Ga(s)\bs \SL_{n+1}(\R)/\SO(n+1)$. 
there are infinitely many primes $p$ with the following property. There exist constants $\ca C, \ca L > 0$ such that 
\[\dim H_n(Y(p^\ell);\Q)\ge \ca C\cdot\vol(Y(p^\ell))^{\fr{n+1}{n^2+2n}}\]
for each $\ell\ge\ca L$. 
\end{mainthm}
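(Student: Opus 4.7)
The plan is to construct many linearly independent flat $n$-cycles in $H_n(Y(p^\ell);\Q)$ by lifting a single closed flat in $Y(1)$ through the cover $\pi:Y(p^\ell)\to Y(1)$, then bounding the span of the preimage components via an equivariant intersection pairing.

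\textbf{Arithmetic construction of the flat.} Fix a totally real number field $K/\Q$ of degree $n+1$. The regular representation of $K^\times$ on $K$ sends the norm-one subgroup $K^{(1)}$ into $\SL_{n+1}(\Q)$; its Zariski closure $T\subseteq\SL_{n+1}$ is a $\Q$-anisotropic maximal torus with $T(\R)\cong\R^n$ split over $\R$. By Dirichlet's unit theorem, $T(\Z)$ is a cocompact lattice in $T(\R)$, so the image of the $T(\R)$-orbit in $Y(1)$ is a closed flat $n$-torus $F$.

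\textbf{Nonvanishing and choice of primes.} Using a Millson--Raghunathan tuple, find an auxiliary rational element $g\in\SL_{n+1}(\Q)$ together with a dual cycle $D$ of complementary dimension $\dim Y-n=(n^2+n)/2$ (built from $T':=gTg^{-1}$ and its associated geodesic submanifold) so that the algebraic intersection number $[F]\cdot[D]$ is nonzero in some finite cover. The primes admitted by the theorem are those for which the relevant rational data --- $K$, $T$, $T'$, $D$ --- have good reduction mod $p$; Chebotarev supplies infinitely many.

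\textbf{Counting components and comparison to volume.} Components of $\pi^{-1}(F)\subset Y(p^\ell)$ are indexed by $\Ga(p^\ell)\bs\Ga(1)/\stab_{\Ga(1)}(\wtil F)$. Because $\stab_{\Ga(1)}(\wtil F)$ is commensurable with $N_{\SL_{n+1}}(T)(\Z)$, whose $\Q$-rank equals $\dim T=n$, its image in $\SL_{n+1}(\Z/p^\ell)\cong\Ga(1)/\Ga(p^\ell)$ has order $\asymp p^{\ell n}$. Hence the number of preimage components is $N\asymp p^{\ell(n^2+n)}$; the same estimate bounds the number of components of $\pi^{-1}(D)$. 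Since $\vol(Y(p^\ell))\asymp p^{\ell(n^2+2n)}$, the target bound $\vol^{(n+1)/(n^2+2n)}\asymp p^{\ell(n+1)}$ demands on the order of $N^{1/n}$ linearly independent classes among the $\{[F_i]\}$.

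\textbf{Main obstacle: the dimension bound.} The crux is to show $\dim\Span\{[F_i]\}$ is at least $\ca C\cdot p^{\ell(n+1)}$. I would use the algebraic intersection pairing with $\{[D_j]\}$: it is equivariant for the deck action of $\SL_{n+1}(\Z/p^\ell)$ and, by the nonvanishing step, has at least one nonzero entry coming from the single pair lying over $F\cdot D$. Both the domain and codomain of the pairing are quotients of permutation modules $\Q[\SL_{n+1}(\Z/p^\ell)/H]$ for the reductions of the stabilizers, and equivariance together with the seed nonzero entry forces the rank of the pairing matrix to be at least the dimension of the smallest irreducible constituent common to both permutation modules that the pairing does not annihilate. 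Identifying this constituent, verifying that its dimension is $\asymp p^{\ell(n+1)}$, and confirming that it survives the quotient to homology is the most delicate part of the argument; it is what pins down the precise exponent $(n+1)/(n^2+2n)$ and ultimately restricts the result to an infinite, but thin, set of primes.
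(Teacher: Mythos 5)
Your construction of the flat via a $\Q$-anisotropic, $\R$-split torus coming from a totally real field of degree $n+1$ is exactly the paper's starting point, and your counting that the number of preimage components of the flat is $\asymp p^{\ell(n^2+n)}$ agrees with the paper's estimate $|W|/|W_1|$. However, the proposal has a genuine gap in the construction of the dual cycle $D$. You build $D$ from a conjugate torus $T' = gTg^{-1}$ and ``its associated geodesic submanifold,'' but a conjugate torus produces another \emph{flat} of the same dimension $n$, not a cycle of the complementary dimension $\dim Y - n = \tfrac{n^2+n}{2}$. Conjugating by $g$ can never change the dimension, so for $n\ge 2$ this cannot yield the transverse dual cycle you need. (The conjugate-flat trick does work in the Hilbert modular case, where the flat is half-dimensional, but not for $\SL_{n+1}$.) The paper instead takes $G_2$ to be the centralizer of an involution $A_2$ acting by $-1$ on a hyperplane $P$ and trivially on the line $L = \R v$; this makes $G_2(\Q)$ conjugate to $\GL_n(\Q)\subset \SL_{n+1}(\Q)$ and $X_2$ of dimension $\tfrac{n(n+1)}{2}$, complementary to the flat. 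The size $|W_2|\sim (p^{n^2-1})^\ell$ of the corresponding stabilizer, not $p^{\ell n}$, is what produces the exponent $(n+1)/(n^2+2n)$.

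The second issue is that you have correctly identified the ``main obstacle'' --- lower-bounding $\dim\Span\{[F_i]\}$ --- but the representation-theoretic route you sketch (decomposing the permutation modules $\Q[\SL_{n+1}(\Z/p^\ell)/H]$ and isolating the irreducible constituent carried by the intersection pairing) is left unresolved and is considerably more involved than what is actually needed. The paper sidesteps all representation theory with Xue's elementary pigeonhole lemma: if every translate of $[D]$ pairs nontrivially with at least one translate of $[F]$, and each translate of $[F]$ can intersect at most $t := |Y_1\cap Y_2|\cdot|W_1|$ translates of $[D]$, then $\dim\Span\{[F_i]\}\ge |W|/(|W_1|\,|W_2|\,|Y_1\cap Y_2|)$. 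This yields the exponent $n+1$ directly once the orders of $W$, $W_1$, $W_2$ are computed. One further point: computing $|W_1|$ requires controlling the image of the torus $G_1(\Z)$ in $\SL_{n+1}(\Z/p^\ell)$, and since strong approximation fails for tori the paper must argue by hand (Lemma~\ref{lem:diagonalizable}) that for Chebotarev-many primes the characteristic polynomial of $A_1$ splits completely mod $p^j$, making $A_1$ diagonalizable there --- this is the precise content behind your informal ``good reduction'' condition and is what restricts the theorem to infinitely many (rather than all) primes.
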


\begin{mainthm}\label{thm:SO}
For $n=2m\ge2$, let $Q_n$ be the 
unimodular, even integral symmetric bilinear form with signature $(n,n)$. %% DHS
Let $G=\SO(Q_n)$ be its isometry group, and define $\Ga(s)<\SO(Q_n;\Z)$ and $Y(s)=\Ga(s)\bs G(\R)/K$ as above. 
%For $s\ge1$, denote
%\[\Ga(s)=\ker\big[\SO(Q_n;\Z)\ra\SO(Q_n;\Z/s\Z)\big],\]
%and $Y(s)=\Ga(s)\bs\SO(Q_n;\R)/K$. 
For any $\ep > 0$, there are infinitely many primes $p$ with the following property. There exist constants $\ca C, \ca L > 0$ such that 
\[\dim H_n(Y(p^\ell);\Q)\ge\ca C\cdot \vol(Y(p^\ell))^{\fr{n-1-\ep}{n^2}}\]
for each $\ell\ge\ca L$. 
\end{mainthm}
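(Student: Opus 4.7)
The plan is to adapt the approach of Theorem \ref{thm:SL} to the orthogonal group $G = \SO(Q_n)$, producing many closed flat $n$-tori in $Y(p^\ell)$ and detecting them in homology. Since $G$ has real rank $n$, any $\Q$-anisotropic maximal $\R$-split torus $T < G$ gives rise to a closed flat $n$-torus $F_T = T(\Z)\bs T(\R)\hra Y(1)$, and the components of its preimage in $Y(p^\ell)$ supply $n$-cycles in $H_n(Y(p^\ell);\Q)$.

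The first step is to construct an initial $\Q$-anisotropic, $\R$-split torus $T < \SO(Q_n)$ of dimension $n$. Although $Q_n$ is $\Q$-split, it admits $\Q$-rational orthogonal decompositions into binary subforms that are themselves $\Q$-anisotropic and $\R$-split (for example, binary forms $\Q$-equivalent to $x^2 - dy^2$ for a positive non-square integer $d$). Writing $Q_n = q_1\op\cd\op q_n$ over $\Q$ for such binary forms, the stabilizer of this decomposition in $G$ is a $\Q$-anisotropic, $\R$-split torus of dimension $n$, with compact flat orbit $F_T\hra Y(1)$.

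Next I would produce many cycles, either by counting components of the preimage of $F_T$ (indexed by cosets $\Ga(p^\ell)\bs\Ga(1)/N_{\Ga(1)}(T)$) or by varying the $\Q$-decomposition over a family parametrized by tuples $(d_1,\dots,d_n)$ satisfying suitable local conditions at the chosen prime $p$. This produces a collection of flat $n$-tori in $Y(p^\ell)$ of cardinality at least $\vol(Y(p^\ell))^{(n-1-\ep)/n^2}$, with the $\ep$ absorbing an arithmetic counting error (stemming, for instance, from bounds on class numbers of the associated orders or from an estimate for the number of $\Ga(p^\ell)$-orbits on admissible decompositions).

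The main obstacle is showing that the constructed cycles are linearly independent in $H_n(Y(p^\ell);\Q)$. Following the strategy of Theorem \ref{thm:SL} (in the spirit of Avramidi--Nguyen-Phan), I would detect each flat by pairing it with a compactly supported cocycle Poincar\'e dual to a transverse complementary-dimensional flat, with the prime $p$ chosen so the local structure of $Q_n$ at $p$ guarantees the primary and dual flats meet transversely and in controlled numbers. The key technical difficulty is bounding the off-diagonal intersection numbers in this orthogonal setting, since the $\Q$-forms of anisotropic tori in $\SO(Q_n)$ arise from a far less rigid family than the norm tori of totally real extensions used for $\SL_{n+1}$; this flexibility is what ultimately forces the $\ep$ loss in the exponent, through the density estimate needed to extract a pairing matrix of large rank.
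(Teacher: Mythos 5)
The proposal has the right broad outline---construct a $\Q$-anisotropic $\R$-split torus in $\SO(Q_n)$, pass to congruence covers, and detect flat cycles by intersection with complementary cycles---and this is indeed the framework the paper uses (flat cycles from centralizers of specific matrices in $\SO(Q_n;\Z)$, dualized against a complementary cycle, counted via Xue's linear-algebra argument). But the proposal is wrong or silent on the parts where the actual work happens.

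First, the complementary cycle you pair against is \emph{not} a flat. Here $\dim X = n^2$ and the flat $X_1$ has dimension $n$, so the dual cycle $X_2$ has dimension $n^2-n = n(n-1)$. In the paper $G_2(\R)$ is essentially $\SO(n,n-1)$, and $X_2$ is its symmetric space, not a torus. Treating it as a flat obscures the actual count of $|W_2(r,s)|$, which requires strong approximation for a semisimple group (via the spin cover, since $\SO(Q_n)$ is not simply connected) and a separate estimate of $|\Om(\wtil Q_{n-1};\Z/p^j\Z)|$.

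Second, the proposal misattributes the $\ep$ loss. You suggest it absorbs ``bounds on class numbers of the associated orders'' or ``the number of $\Ga(p^\ell)$-orbits on admissible decompositions'' or a density estimate for a pairing-matrix rank. None of this occurs. In the paper the exponent is $\kappa = 1 - \frac{\log\bigl(p^{n^2-n+1} + 2p^{(n^2-n+2)/2} + p^{2n-1}\bigr)}{\log\bigl(p^{n^2}+p^{n(n-1)/2}\bigr)}$, which satisfies $\kappa \ge \frac{n-1-\log_p 3}{n^2}$; the $\ep$ is just the lower-order term $\log_p 3$, which goes to zero as $p\to\infty$. It is pure bookkeeping in the cardinalities of finite orthogonal groups, not an arithmetic class-number issue.

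Third, and most importantly, the proposal never actually proves the claimed cardinality lower bound; it is asserted. The content of the theorem is the estimate $\frac{|W(r,s)|}{|W_1(r,s)|\cdot|W_2(r,s)|}$, obtained by (i) showing $(G,G_1,G_2,K,p^k)$ is a Millson--Raghunathan tuple (this requires $p\equiv 3\pmod 4$ for the spinor-norm argument, and a nontrivial observation that the building-block $B$ is $4\times4$ with irreducible characteristic polynomial, so diagonal elements of $G_1(\Q)$ in each block are $\pm I$), and (ii) computing $|W_1(r,s)|$. The latter is the genuine difficulty the paper flags: $G_1$ is a torus, so strong approximation fails, and the authors instead choose $p$ to split completely in the splitting field $\Q(\sqrt2,\sqrt5)$ of $\chi$ (via Kronecker--Weber and Dirichlet) so that $A_1$ is diagonalizable mod $p^j$ and $G_1(\Z)$ surjects onto the centralizer of $A_1$ in $\SO(Q_n;\Z/p^j\Z)$. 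Your proposal does not mention this obstruction or how to overcome it, and without it there is no way to bound $|W_1(r,s)|$ and hence no way to derive the exponent.
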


Theorems \ref{thm:SL} and \ref{thm:SO} are proved by giving a lower bound on the dimension of the subspace of $H_n(Y(p^\ell);\Q)$ spanned by \emph{flat cycles}, i.e.\ compact, totally geodesic, flat submanifolds of $Y(p^\ell)$ of maximal dimension $n$. Using the same technique we can prove a similar result for Hilbert modular varieties. %equal to the real rank of the group $G(\R)$ used to define $Y(r)$. 

\begin{mainthm}\label{thm:ROS}
Fix $n\ge2$ and fix a degree-$n$ totally real number field $F/\Q$ with ring of integers $\ca O_F$. For a nonzero 
% WAS: prime
ideal $\mf s\sbs\ca O_F$, consider the congruence subgroup 
\[\Ga(\mf s)=\ker\big[\SL_{2}(\ca O_F)\ra\SL_2(\ca O_F/\mf s)\big]\] and let $Y(\mf s)=\Ga(\mf s)\bs (\bb H^2)^{\ti n}$. There are infinitely many prime ideas $\mf p\sbs\ca O_F$ with the following property. There exist constants $\ca C,\ca L>0$ such that if $\ell\ge\ca L$, then the subspace of $H_n(Y(\mf p^\ell);\Q)$ spanned by flat cycles has dimension at least $\ca C\cdot\vol(Y(\mf p^\ell))^{1/3}$.
\end{mainthm}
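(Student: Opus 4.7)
The plan is to adapt the Millson--Raghunathan style flat-cycle construction used for Theorems \ref{thm:SL} and \ref{thm:SO} to the Hilbert modular setting. Since $\vol(Y(\mathfrak p^\ell)) \sim [\SL_2(\mathcal O_F):\Gamma(\mathfrak p^\ell)] \sim N(\mathfrak p)^{3\ell}$, the target is to produce $\sim N(\mathfrak p)^\ell$ linearly independent classes in $H_n(Y(\mathfrak p^\ell);\Q)$ represented by compact, totally geodesic, flat $n$-submanifolds. To build a single such cycle $C\subset Y$, I would choose a totally real quadratic extension $L/F$ in which every real place of $F$ splits; equivalently, $L$ is totally real of degree $2n$ with $F$ as its maximal totally real subfield. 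Such $L$ is abundant (take $L = F(\sqrt\alpha)$ for $\alpha \in \mathcal O_F$ totally positive and non-square in each completion $F_v$). Fixing an $\mathcal O_F$-basis of $\mathcal O_L$, the regular representation of $L$ on itself embeds $L\hookrightarrow M_2(F)$ over $F$ and carries the norm-one torus
\[
T := \{x\in L^\times : N_{L/F}(x)=1\}\hookrightarrow \SL_2(F).
\]
On real points, $T(F\otimes\R)\cong(\R^\times)^n$, whose quotient by its compact part $(\pm 1)^n$ is a maximal flat $\R^n$ inside $(\mathbb H^2)^n$. By Dirichlet's unit theorem, $\Lambda := T \cap \SL_2(\mathcal O_F)$, the norm-one units of $\mathcal O_L$, has $\Z$-rank $n$, so acts cocompactly on this flat; its quotient embeds (after passing to a torsion-free finite-index subgroup of $\SL_2(\mathcal O_F)$ if necessary) as the desired flat $n$-cycle $C \subset Y$.

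Next, by Chebotarev density, pick infinitely many primes $\mathfrak p\subset\mathcal O_F$ inert in $L/F$, so that $T$ is anisotropic mod $\mathfrak p$ and $[\Lambda : \Lambda\cap\Gamma(\mathfrak p^\ell)] \sim N(\mathfrak p)^\ell$. In the cover $Y(\mathfrak p^\ell)\to Y$, the cycle $C$ lifts to roughly $[\Gamma:\Gamma(\mathfrak p^\ell)\cdot\mathrm{Stab}_\Gamma(C)] \sim N(\mathfrak p)^{2\ell}$ geometrically distinct flat cycles $\{C_i\}$. To establish independence of a suitably chosen subfamily in $H_n$, construct dual flat $n$-cycles from a second $F$-torus $T' \subset \SL_2$ transverse to $T$ at each real place of $F$ — either from a linearly disjoint quadratic extension $L'/F$ with the same signature data, or, more economically, from a $\GL_2(F)$-conjugate $T' = gTg^{-1}$ chosen so that the geodesic axes of $T(F_v)$ and $T'(F_v)$ cross in every $\mathbb H^2$ factor. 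Translating $T'$ by carefully selected representatives of $\Gamma/\Gamma(\mathfrak p^\ell)$ yields a matching family $\{D_j\}$ of dual cycles.

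The main obstacle is the resulting combinatorial computation in $\SL_2(\mathcal O_F/\mathfrak p^\ell)$. Since two transverse geodesics in $\mathbb H^2$ meet in exactly one point, two transverse flat $n$-cycles in $(\mathbb H^2)^n$ meet once on the universal cover, and the intersection numbers $C_i\cdot D_j$ in $Y(\mathfrak p^\ell)$ reduce to counts of double cosets in $\SL_2(\mathcal O_F/\mathfrak p^\ell)$ with prescribed relative Bruhat position for the reductions of the two tori. Paralleling Theorems \ref{thm:SL} and \ref{thm:SO}, I expect the hard part to be selecting an index set $I$ of size $\sim N(\mathfrak p)^\ell$ and twist parameters so that the $|I|\times|I|$ intersection pairing matrix $[C_i\cdot D_j]$ is non-degenerate — ideally a signed permutation matrix on $I$. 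Once this is in hand, $\{[C_i]\}_{i\in I}$ span a subspace of dimension at least $\ca C \cdot N(\mathfrak p)^\ell \sim \ca C \cdot \vol(Y(\mathfrak p^\ell))^{1/3}$ in $H_n(Y(\mathfrak p^\ell);\Q)$ for $\ell$ sufficiently large, as desired.
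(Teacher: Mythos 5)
Your overall framework matches the paper's: realize a periodic maximal flat via the norm-one torus of a totally real quadratic extension $L/F$ (equivalently, the centralizer of a companion matrix of $\xi=x^2+ax+1\in\ca O_F[x]$, which is what the paper does in \S\ref{sec:flats-ROS}), choose a transverse conjugate torus $T'=gTg^{-1}$ to produce dual flat cycles (Lemma \ref{lem:approximate-flat}), and try to extract $\sim N(\mf p)^\ell$ linearly independent translates in the congruence cover. But the part you flag as ``the hard part'' and explicitly leave undone --- choosing $\sim N(\mf p)^\ell$ representatives and ``twist parameters'' so that the $|I|\times|I|$ intersection matrix $[C_i\cdot D_j]$ is nondegenerate, ideally a signed permutation matrix --- is precisely the step the paper circumvents, and as you describe it the step is both unproved and substantially harder than what is needed.

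The paper never selects representatives or analyzes the Bruhat double-coset combinatorics of $\SL_2(\ca O_F/\mf p^\ell)$. Instead it uses the soft counting lemma of Xue (Lemma \ref{lem:linear-algebra} and Proposition \ref{prop:xue}): take $S$ and $S^*$ to be \emph{all} deck-group translates of $[Y_1]$ and $[Y_2]$; condition (1) of Lemma \ref{lem:linear-algebra} holds automatically once a single transverse pair $(Y_1,Y_2)$ has nonzero algebraic intersection; condition (2) is a covering-space count showing each translate of $Y_1$ meets at most $|Y_1\cap Y_2|\cdot|W_1|$ translates of $Y_2$. The conclusion $\dim\ge|W|/(|W_1||W_2|)$ then comes out directly, with $|W|\sim N(\mf s)^3$ and $|W_i|\lesssim N(\mf s)$ by the finite-torus estimates of \S\ref{sec:finite-lie} and \S7.2. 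There is no need for the intersection matrix to be a permutation matrix, to be square, or even to be nondegenerate on any preselected index set. So your proposal is missing the key device that makes the counting tractable.

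A second, logically prior gap: you never establish that a single flat cycle $C$ is nonzero in $H_n(Y(\mf p^k);\Q)$ in the first place, i.e.\ that the transverse intersections of $Y_1(\mf p^k)$ and $Y_2(\mf p^k)$ all carry the same sign. This is exactly the Millson--Raghunathan-style orientation/sign analysis, which in the Hilbert modular case is Zschumme's theorem (quoted here as Theorem \ref{thm:zschumme}); the paper explicitly calls this the ``most involved part'' of the ingredient theorems. ``Passing to a torsion-free finite-index subgroup'' is not enough --- you need a specific congruence level at which all local intersection numbers agree in sign, and that requires an argument. A couple of smaller points: you propose using primes \emph{inert} in $L/F$, while the paper uses primes that \emph{split completely} (Lemma \ref{lem:diagonalizable}) so the reduced torus is split and the surjectivity/size bounds are routine; the needed upper bound on $|W_1|$ is in fact available in both regimes, so this is a stylistic choice, but the split choice substantially simplifies the analysis of the finite torus. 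And the exponent bookkeeping ($N(\mf p)^{2\ell}$ lifts, of which $N(\mf p)^\ell$ are independent) is asserted without the mechanism that actually produces the factor-of-$N(\mf p)^\ell$ loss, which again is Xue's counting bound $|W_1|$.
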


In Theorem \ref{thm:ROS}, the real rank $n$ is equal to half the dimension of $Y(\mf p^\ell)$, and it is known that $H_n(Y(\mf p^\ell);\Q)$ grows linearly in the volume, again by results in $L^2$-cohomology \cite[Thm.\ 5.12]{lueck-book} and \cite{Gaboriau}. In particular, the homology we produce using flat cycles only accounts for a fraction of $H_n(Y(\mf p^\ell);\Q)$. 

Avramidi--Nguyen-Phan \cite[\S9]{ANP} pose the problem of determining the growth rate of the subspace of homology spanned by flat cycles in congruence covers. Our results provide nontrivial lower bounds. We do not know of any upper bound other than that given by (\ref{eqn:L2}). In particular, it would be interesting to determine whether the exponent $\frac{1}{3}$ in Theorem \ref{thm:ROS} is sharp; if it is, then the growth of $H_n(Y(\mf p^\ell);\Q)$ is not fully accounted for by flat cycles. 

% Avramidi--Nguyen-Phan \cite[\S9]{ANP} ask about the growth rate of the subspace of homology spanned by flat cycles. Our results give an answer to this question in the form of lower bounds. It would be interesting to also prove upper bounds. In particular, we wonder if the exponent of $\frac{1}{3}$ in Theorem \ref{thm:ROS} is sharp; if it is, then the growth of $H_n(Y(\mf q);\Q)$ is not fully accounted for by flat cycles. 

\begin{rmk}
Problem \ref{prob:growth} is related to a conjecture of Sarnak--Xue. Recall that the cohomology of an arithmetic group can be described representation-theoretically by Matsushima's formula, and computing $H_k(Y(p^\ell);\C)$ reduces to computing the multiplicity $m(\pi,p^\ell)$ of certain representations $\pi$ occurring in $L^2(\Ga(p^\ell)\bs G(\R))$. Sarnak--Xue \cite{sarnak-xue} conjectured upper bounds on the growth of these multiplicities. Ultimately, this has consequences for the growth of Betti numbers, but this conjecture has been approached in only a small number of cases. See, for example, the work of Marshall, Marshall--Shin, and Gerbelli-Gauthier \cite{marshall,marshall-shin,gerbelli-gauthier}, where the case $G=U(n,m)$ is studied using the theory of endoscopy. 
\end{rmk}

\paragraph{About the proof techniques.} 
This paper combines several ingredients, as indicated in the table of contents. The starting point is recent work \cite{ANP}, \cite{tshishiku-geocycles}, \cite{zschumme} that proves that flat cycles give nontrivial homology classes in some congruence cover $Y(p^k)$ in the cases considered in Theorems \ref{thm:SL}, \ref{thm:SO}, and \ref{thm:ROS}, respectively. We combine this with a simple topological argument of Xue \cite{xue} to estimate the number of linearly independent cycles in an orbit of the deck group of a further cover $Y(p^\ell)\rightarrow Y(p^k)$. This reduces the problem mostly to computing the size of the image of $H(\Z)\ra H(\Z/m\Z)$ for certain algebraic groups $H$. A key difficulty is that we must consider the case when $H$ is an algebraic torus, and these groups do not satisfy strong approximation. We overcome this with ad hoc arguments that use classical results from number theory about splitting primes in field extensions, including Chebotarev's density theorem and the Kronecker--Weber theorem; see e.g.\ Lemma \ref{lem:diagonalizable} and Proposition \ref{prop:splitting}. The orthogonal case (Theorem \ref{thm:SO}) is the most involved. %This is related to the fact that our results only work for infinitely many primes. 

Theorems \introthms\ remain true when their conclusions are strengthened from the existence of ``infinitely many primes'' to ``a set of primes of positive density.'' This is immediate from the proofs presented in \S\ref{sec:finite-lie} and \S\ref{sec:counting}, which require the exclusion of finitely many primes and applications Chebotarev's density theorem and Dirichlet's theorem on primes in arithmetic progressions.

\begin{rmk}
%The homology classes giving the lower bounds in our theorems come from \emph{flat cycles}. If $G_1<G$ is a $\Q$-anisotropic torus, then the image $Y_1(r)$ of $G_1(\R)$ in $Y(r)$ is a compact, totally geodesic flat submanifold. The homology class $[Y_1(r)]$ in $H_*(Y(r);\Q)$ is called a flat cycle. 
Examples of nonzero flat cycles in the homology of $Y=\Ga\bs G(\R)/K$ are constructed for $G=\SL_{n+1}$, $G=\SO(Q_n)$, and $G=R_{F/\Q}(\SL_2)$ by \cite{ANP}, \cite{tshishiku-geocycles}, and \cite{zschumme}, respectively. The same works \cite{ANP,tshishiku-geocycles,zschumme} prove that flat cycles span subspaces in $H_n(Y(p^\ell);\Q)$ of arbitrarily large dimension as $\ell$ increases.
% The works \cite{ANP,tshishiku-geocycles,zschumme} that prove that flat cycles give nontrivial homology classes in the cases also show that the dimension of the subspace of $H_n(Y(p^\ell);\Q)$ spanned by flat cycles is arbitrarily large (as $\ell$ increases).
Our main theorems both quantify and give an alternate proof of this latter fact, assuming that a single non-zero flat cycle exists (which is the most involved part of \cite{ANP, tshishiku-geocycles,zschumme}). 
\end{rmk}

\begin{rmk}
The cohomology of arithmetic groups has applications to the study of fiber bundles. In particular, Theorem \ref{thm:SO} implies a similar growth theorem for the homology for certain connected covers of the classifying space $B\Diff(W_n)$ for bundles with fiber $W_n=\#_n(S^{2d}\ti S^{2d})$ for $d\ge2$. For more information, see \cite[Cor.\ 2]{tshishiku-geocycles} and Theorem 29 in the Appendix of \cite{tshishiku-geocycles} by M.\ Krannich. 
\end{rmk}

\paragraph{Paper structure.} Sections \ref{sec:xue} and \ref{sec:geocycles} present the general approach used to prove each of Theorems \introthms. The details of the proofs appear in parallel in Sections \ref{sec:flats}--\ref{sec:counting}.

\paragraph{Acknowledgements.} The second author is supported by NSF grant DMS-2104346. The authors thank Jenny Wilson and Jeremy Miller for organizing of the Banff Workshop 21w5011, where the idea for this project was formed. Thanks to Kevin Schreve for a helpful email about L2 cohomology.

\section{Topological argument for homology growth in finite covers}\label{sec:xue}

The main goal of this section is to explain a simple topological argument of Xue \cite[\S3]{xue} that we use to prove our main theorems. Xue explains the argument in the context of arithmetic groups and congruence covers, but the argument applies more generally. We explain this below. It could be of independent interest. 

Fix an oriented manifold $Y$ (possibly noncompact) with oriented, embedded submanifolds $Y_1,Y_2\sbs Y$ of complementary dimension $n_1+n_2=\dim Y$. Assume that $Y_1$ is compact and that $Y_1,Y_2$ intersect transversely with nonzero algebraic intersection number $Y_1\cdot Y_2\neq0$. 

The submanifold $Y_1\sbs Y$ defines a homology class $[Y_1]\in H_{n_1}(Y;\Q)$. Since $Y_2$ is not assumed to be compact, it only defines a homology class in Borel--Moore homology with closed supports $H_{n_2}^{BM}(Y;\Q)$. By Poincar\'e duality, the intersection pairing between $H_{n_1}(Y;\Q)$ and $H_{n_2}^{BM}(Y;\Q)$ is a perfect pairing. Note in particular, that $[Y_1]\neq0$ in $H_{n_1}(Y;\Q)$ because $Y_1\cdot Y_2\neq0$. 

Fix a basepoint $y\in Y_1\cap Y_2$. Fix a finite quotient $\rho:\pi_1(Y)\onto W$, and let $\pi:\what Y\ra Y$ be the corresponding covering space. Choose a component $\what Y_i$ of $\pi^{-1}(Y_i)$ for $i=1,2$, and set $W_i=\rho(\pi_1(Y_i))$. In particular $\what Y_i\ra Y_i$ is a regular cover with deck group $W_i$. 
%\footnote{Technically, we should specify a basepoint for $\pi_1(\cdot)$. It is convenient to choose a point of $Y_1\cap Y_2$.}

The following result gives a lower bound on how many $W$-translates of $\what Y_1$ are linearly independent in $H_{n_1}(\what Y;\Q)$. 

\begin{prop}[Xue]\label{prop:xue}
With the preceding setup, the $W$-translates of $[\what Y_1]\in H_{n_1}(\what Y;\Q)$ span a subspace of dimension at least $\frac{1}{|Y_1\cap Y_2|}\cdot\frac{|W|}{|W_1|\cdot|W_2|}$. 
%\[\frac{1}{|Y_1\cap Y_2|}\cdot\frac{|W|}{|W_1|\cdot|W_2|}.\]
%\[\dim_\Q \text{\emph{span}}\{w.[\what Y_1]: w\in W\}\ge \frac{1}{|Y_1\cap Y_2|}\cdot\frac{|W|}{|W_1|\cdot|W_2|}.\]
\end{prop}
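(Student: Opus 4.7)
My plan is to reduce the dimension estimate to a rank computation for a matrix of intersection numbers, and then bound that rank using K\"onig's theorem on bipartite matchings.

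I would first form the $|W/W_1| \times |W/W_2|$ integer matrix $M$ with entries
\[
M_{wW_1,\,w'W_2} := (w\what Y_1) \cdot (w'\what Y_2),
\]
well-defined because the deck group action of $W_i$ preserves the component $\what Y_i$. Since the Poincar\'e--Lefschetz pairing $H_{n_1}(\what Y;\Q) \times H_{n_2}^{BM}(\what Y;\Q) \to \Q$ is non-degenerate, any linear dependence among the classes $[w\what Y_1]$ in $V$ produces the same dependence among the rows of $M$, so $\dim V \ge \mathrm{rank}(M)$.

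Next I would lower-bound $\mathrm{rank}(M)$ by the maximum matching number $\nu(G)$ of the bipartite graph $G$ on $W/W_1 \sqcup W/W_2$ whose edges record the non-zero entries of $M$: any matching of size $k$ singles out a $k \times k$ submatrix with a permutation pattern of non-zero entries, hence of rank $k$. To control $\nu(G)$, I would count both edges and maximum degree via the cover structure. The preimage $\pi^{-1}(Y_1 \cap Y_2)$ consists of exactly $|W|\cdot|Y_1\cap Y_2|$ points, each lying in a unique pair $(w\what Y_1,\,w'\what Y_2)$. Since $\pi\colon w\what Y_1 \to Y_1$ is $|W_1|$-to-one, the vertex $wW_1 \in G$ has degree at most $|W_1|\cdot|Y_1\cap Y_2|$, and symmetrically each $w'W_2$ has degree at most $|W_2|\cdot|Y_1\cap Y_2|$. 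Combined with the identity $\sum_{ij} M_{ij} = |W|\,(Y_1 \cdot Y_2) \neq 0$ (which lower-bounds $|E(G)|$ via the maximum entry of $M$) and the K\"onig inequality $\nu(G) \ge |E(G)|/\Delta(G)$, these estimates should yield the claimed $\nu(G) \ge \frac{|W|}{|W_1|\,|W_2|\,|Y_1 \cap Y_2|}$.

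The main technical subtlety I anticipate is bounding $\max_{ij}|M_{ij}|$, since a single pair $(w\what Y_1,\,w'\what Y_2)$ may contain up to $|W_1 \cap (w^{-1}w')W_2(w^{-1}w')^{-1}|\cdot|Y_1 \cap Y_2|$ transverse intersection points (the fibers of the projection $\what Y_1 \cap (w^{-1}w')\what Y_2 \to Y_1 \cap Y_2$ have that size, as one checks by chasing the deck action through a chosen basepoint lift). Playing the two symmetric degree bounds on $G$ against one another in K\"onig's inequality should absorb this $|W_1 \cap W_2|$-type factor and produce the stated coefficient $\frac{1}{|W_1||W_2||Y_1 \cap Y_2|}$ in the conclusion.
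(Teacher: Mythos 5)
Your combinatorial inputs are correct and essentially match the paper's: you set up the right matrix $M$, and your degree bound for $wW_1$ — that it has at most $|W_1|\cdot|Y_1\cap Y_2|$ neighbors, obtained by lifting $Y_1 \cap Y_2$ through the $|W_1|$-to-one covering $w\what{Y}_1 \to Y_1$ — is precisely the estimate the paper uses. However, the mechanism by which you convert these bounds into a rank estimate is flawed: the claim that a matching of size $k$ ``singles out a $k\times k$ submatrix with a permutation pattern of non-zero entries, hence of rank $k$'' is false. A matching of size $k$ guarantees $k$ nonzero entries in distinct rows and columns, but says nothing about the \emph{other} entries of that $k\times k$ submatrix, which can be nonzero and can collapse the rank. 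The all-ones $n\times n$ matrix has matching number $n$ and rank $1$. Indeed the inequality between rank and matching number of the support runs the opposite direction from what you need: the matching number equals the term rank, which is an \emph{upper} bound on rank, not a lower bound. This also means the remaining steps (bounding $\max_{ij}|M_{ij}|$, K\"onig's theorem, ``playing the degree bounds against each other'') are aimed at a target that would not give the proposition even if reached.

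The paper avoids all of this with a much simpler linear-algebra lemma (Lemma~\ref{lem:linear-algebra}, attributed to Xue). In matrix language: let $r=\operatorname{rank}(M)$ and extract a basis of the row space from among the rows of $M$. If some column $j$ is nonzero, one of these $r$ basis rows must be nonzero in position $j$ (otherwise every row, being in their span, would vanish there). Since each row has at most $t$ nonzero entries, at most $rt$ columns can be nonzero, hence $r \ge (\text{number of nonzero columns})/t$. Applied to your $M$: the number of nonzero columns is $|W|/|W_2|$ (each translate $w'\what{Y}_2$ pairs nontrivially with some $u\what{Y}_1$, since the total intersection number $|W_2|\cdot(Y_1\cdot Y_2)\neq 0$), and $t = |W_1|\cdot|Y_1\cap Y_2|$ is exactly your degree bound. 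This gives the claimed $\frac{|W|}{|W_1||W_2||Y_1\cap Y_2|}$ without any control on $\max|M_{ij}|$. If you discard the K\"onig framing and substitute this basis-extraction count, your proof goes through.
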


The proof is an easy consequence of the following linear algebra fact, proved in \cite[Lem.\ 6]{xue}. 

\begin{lem}\label{lem:linear-algebra}
Let $V$ be a vector space with dual $V^*$, and let $\phi\cdot u$ denote the pairing between $\phi\in V^*$ and $u\in V$. Let $S\sbs V$ and $S^*\sbs V^*$ be finite sets and assume that $(1)$ for each $\phi\in S^*$ there exists $u\in S$ such that $\phi\cdot u\neq0$ and $(2)$ there exists $t\ge1$ such that for each $u\in S$, there are at most $t$ elements in $S^*$ such that $\phi\cdot u\neq0$. Then 
\[\dim_\Q\text{\emph{span}}(S)\ge\frac{|S^*|}{t}\]
\end{lem}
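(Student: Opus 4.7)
The plan is to reduce the statement to an elementary double-counting argument on the set of pairs $\{(\phi,u) \in S^* \times S : \phi \cdot u \ne 0\}$. Conditions (1) and (2) give a lower bound of $|S^*|$ and an upper bound of $t\cdot|S|$ on the cardinality of this set, which combine to yield only $|S^*| \le t\cdot |S|$. This is weaker than what we want, namely $|S^*| \le t\cdot \dim\text{span}(S)$, so the main task is to replace $|S|$ by $\dim\text{span}(S)$ on the right.

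The key observation is that both hypotheses are preserved when $S$ is replaced by a maximal linearly independent subset $S_0 \subseteq S$, for which $|S_0| = \dim\text{span}(S) =: d$ and $\text{span}(S_0) = \text{span}(S)$. First I would verify that condition (2) remains valid for $S_0$ with the same constant $t$, which is immediate since restricting to a subset of $S$ can only decrease the per-element count. For condition (1), given $\phi \in S^*$ pick $u \in S$ with $\phi\cdot u \ne 0$; expanding $u$ as a linear combination of elements of $S_0$ forces $\phi \cdot u_i \ne 0$ for at least one $u_i \in S_0$.

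With $S_0$ in hand, I would run the double count on the pair set $\{(\phi,u) \in S^* \times S_0 : \phi \cdot u \ne 0\}$. The modified condition (1) contributes at least one pair per $\phi \in S^*$, giving a lower bound of $|S^*|$, while condition (2) bounds the contribution of each $u \in S_0$ by $t$, giving an upper bound of $t\cdot|S_0| = t\cdot d$. Combining gives $|S^*| \le t\cdot d$, i.e.\ $\dim\text{span}(S) \ge |S^*|/t$.

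The argument is short, and there is no real obstacle; the only mild subtlety is recognizing that one must first pass from $S$ to a maximal linearly independent subset before counting, after which the conclusion is a one-line pigeonhole.
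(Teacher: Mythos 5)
Your proof is correct. The paper does not reproduce a proof of this lemma, citing instead Lemma 6 of Xue \cite{xue}. Your argument — pass to a maximal linearly independent subset $S_0\subseteq S$, verify that hypotheses (1) and (2) persist, and double-count incidences $(\phi,u)\in S^*\times S_0$ with $\phi\cdot u\neq0$ — is complete. The only nontrivial verification is that (1) survives the restriction to $S_0$, and you handle it correctly: if $u=\sum_i c_i u_i$ with $u_i\in S_0$ and $\phi\cdot u\neq0$, then linearity of $\phi$ forces $\phi\cdot u_i\neq0$ for some $i$. Condition (2) for $S_0$ is immediate since $S_0\subseteq S$. The pigeonhole count $|S^*|\le t\,|S_0|=t\,\dim\mathrm{span}(S)$ then gives the stated bound.
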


\begin{proof}[Proof of Proposition \ref{prop:xue}]
First we bound the number of translates of $\what Y_2$ that have nontrivial algebraic intersection with $\what Y_1$. 
\[\begin{array}{rcl}
\#\{w(\what Y_2): \what Y_1\cdot w(\what Y_2)\neq0\}&\le& \#\{w(\what Y_2): \what Y_1\cap w(\what Y_2)\neq\vn\}\\[2mm]
&\le& |\what Y_1\cap\pi^{-1}(Y_2)|\\[2mm]
&=&|Y_1\cap Y_2|\cdot|W_1|.\end{array}\]
%Each of these inequalities is trivial.
The first inequality holds because disjoint submanifolds have zero algebraic intersection number. %if two submanifolds that have nonzero algebraic intersection number must intersect
The second inequality says that the number of components of $\pi^{-1}(Y_2)$ that intersect $\what Y_1$ is bounded by the total number of intersections (so there is equality precisely when each component of $\pi^{-1}(Y_2)$ intersects $\what Y_1$ at most once). For the last equality, observe that $\what Y_1\cap \pi^{-1}(Y_2)$ is the preimage of $Y_1\cap Y_2$ under the covering map $\what Y_1\ra Y_1$.

An identical bound holds for $\#\{w(\what Y_2): u(\what Y_1)\cdot w(\what Y_2)\neq0\}$ for any $u\in W$. 

The proposition now follows immediately by applying Lemma \ref{lem:linear-algebra} to $V=H_{n_1}(\what Y;\Q)$, $V^*=H_{n_2}^{BM}(\what Y;\Q)$, and $S$ (resp.\ $S^*$) equal to the $W$-translates of $[\what Y_1]$ (resp.\ $[\what Y_2]$). 
\end{proof}

\section{Algebraic groups and geometric cycles}\label{sec:geocycles}

Here we explain the general setup for studying geometric cycles in the homology of arithmetic locally symmetric manifolds, following methods of Millson \cite{millson} and Millson--Raghunathan \cite{millson-raghunathan}. A good reference is \cite{schwermer}. 

Let $G<\GL_d$ be an algebraic subgroup defined over $\Q$ such that $G(\R)$ is a semisimple Lie group. 
%For a  In particular, $G(\C)<\GL_d(\C)$ is a group of matrices, and for a subring $R\sbs\C$, we denote $G(R)=G(\C)\cap\GL_d(R)$. 
Fix reductive $\Q$-subgroups $G_1,G_2<G$. 

For any subring $R\subset\C$, we define  $G(R):=G\cap\GL_d(R)$, and similarly for $G_1,G_2$. If $G$ is actually defined over $\Z$ (as will hold for some of our examples), then for any ring $R$ (e.g.\ $R=\Z/t\Z$), we can define $G(R)=G\cap \GL_d(R)$. More generally, since $G$ is defined over $\Q$, the group $G(\Z/t\Z)$ is defined as long as $t$ is relatively prime to the (finitely many) denominators appearing in the defining equations of $G$.

For $r\ge1$, let $\Ga(r)<G(\Z)$ denote the congruence subgroup 
\[\Ga(r)=\ker\big[G(\Z)\hra\GL_d(\Z)\ra \GL_d(\Z/r\Z)\big],\]
and set $\Ga_i(r)=G_i(\Z)\cap\Ga(r)$. When $G$ is defined over $\Z$, we can equivalently define $\Ga(r)$ as the kernel of the reduction map $G(\Z)\ra G(\Z/r\Z)$. 

Suppose $K\sbs G(\R)$ is a maximal compact subgroup such that $K_i:=G_i(\R)\cap K$ is a maximal compact subgroup of $G_i(\R)$ for $i=1,2$. Then the orbit map $G_i(\R)\ra G(\R)/K$ defined by $g\mapsto gK$ descends to a totally geodesic embedding of $X_i:=G_i(\R)/K_i$ into $X:=G(\R)/K$. The embedding $X_i\hra X$ descends to an immersion of $Y_i(r):=\Ga_i(r)\bs G_i(\R)/K_i$ into $Y(r):=\Ga(r)\bs G(\R)/K$ for $i=1,2$.
%We write $X=G(\R)/K$ and $X_i=G_i(\R)/K_i$ for the symmetric spaces. Viewing $eK$ as the basepoint, the orbit map $G_i(\R)\ra X$ descends to a (totally geodesic) embedding $X_i\hra X$. 

\begin{defn}
Let $G\leq \GL_n$ and $G_1,G_2 \leq G$ be as above. For a maximal compact subgroup $K\sbs G(\R)$ and integer $r\geq 1$, say that $(G,G_1,G_2,K,r)$ is a {\em \mrt}\ if, using the above notation, the following conditions hold:
\begin{enumerate}[label=({A}\arabic*), ref={A}\arabic*]

\item \label{eqn:compact} $G_1(\Z)$ is cocompact in $G_1(\R)$.

\item \label{eqn:descends} $K_i \leq G_i(\R)$ is a maximal compact subgroup for $i=1,2$.

\item \label{eqn:dimension}
$\dim X_1+\dim X_2=\dim X$ and $X_1\cap X_2=\{eK\}$.

\item \label{eqn:embedded}
$Y_i(r)$ is orientable, and $Y_i(r)\ra Y(r)$ is an embedding.

\item \label{eqn:sign}
$Y_1(r),Y_2(r)$ intersect transversely and each intersection has the same sign.

\end{enumerate}
\end{defn}

Given a \mrt\ $(G,G_1,G_2,K,r)$, for any natural number $s$ that is a multiple of $r$, each component of the lift of $Y_1(r)$ over the covering map $Y(s) \to Y(r)$ represents a nonzero element of homology. In particular, following the results of \S\ref{sec:xue} we have the following bound on the dimension of homology groups of covers of locally symmetric spaces arising from \mrt s.

\begin{lem} \label{lem:mrtbound}
Suppose $(G,G_1,G_2,K,r)$ is a \mrt\ and let $d= \dim(X_1)$. Given $s$ any multiple of $r$, let  $W(r,s):=\Ga(r)/\Ga(s)$ and $W_i(r,s)=\Ga_i(r)/\Ga_i(s)$ for $i=1,2$. Then there is a constant $\ca C > 0$ such that 
\[
\dim( H_d(Y(s); \Q) ) \geq \ca C \cdot \frac{|W(r,s)|}{|W_1(r,s)|\cdot|W_2(r,s)|}
\]
\end{lem}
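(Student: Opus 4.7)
The plan is to deduce this lemma as a direct application of Proposition \ref{prop:xue}. I would take the covering space to be $\pi \colon Y(s) \to Y(r)$ and use $Y_1(r), Y_2(r) \sbs Y(r)$ as the two submanifolds. The relevant finite quotient is $\rho \colon \pi_1(Y(r)) = \Ga(r) \onto W(r,s)$, the natural projection; a component of $\pi^{-1}(Y_i(r))$ is a copy of $Y_i(s)$; and since $\Ga(s) \sbs \Ga(r)$ one has
\[
\rho(\Ga_i(r)) \;=\; \Ga_i(r)/(\Ga_i(r) \cap \Ga(s)) \;=\; \Ga_i(r)/\Ga_i(s) \;=\; W_i(r,s),
\]
so $W_i(r,s)$ plays the role of $W_i$ in Proposition \ref{prop:xue}, and the deck group of $Y_i(s) \to Y_i(r)$ is regular with deck group $W_i(r,s)$ because $\Ga(s)$ is normal in $\Ga(r)$.

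Next, I would verify the hypotheses of Proposition \ref{prop:xue} by matching the axioms of a \mrt. Condition \ref{eqn:compact} makes $Y_1(r)$ compact. Condition \ref{eqn:embedded} gives that each $Y_i(r)$ is an oriented, embedded submanifold; the ambient space $Y(r)$ is orientable because the symmetric space $X$ is orientable and $\Ga(r)$ acts by orientation-preserving diffeomorphisms. Condition \ref{eqn:dimension} provides complementary dimensions $\dim Y_1(r) + \dim Y_2(r) = \dim Y(r)$, and its identification $X_1 \cap X_2 = \{eK\}$ descends to exhibit $[eK] \in Y_1(r) \cap Y_2(r)$, so the intersection is nonempty. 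Finally, condition \ref{eqn:sign} ensures the intersection is transverse with all intersection points of the same sign, so the algebraic intersection number equals $\pm |Y_1(r) \cap Y_2(r)| \neq 0$.

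With these hypotheses verified, Proposition \ref{prop:xue} will yield
\[
\dim H_d(Y(s); \Q) \;\ge\; \frac{1}{|Y_1(r) \cap Y_2(r)|} \cdot \frac{|W(r,s)|}{|W_1(r,s)| \cdot |W_2(r,s)|},
\]
and setting $\ca C = |Y_1(r) \cap Y_2(r)|^{-1}$ (a constant, since $r$ is fixed) will give the lemma. No substantial obstacle arises at this step: the lemma is essentially a repackaging of Proposition \ref{prop:xue} in the arithmetic setting, with the axioms of a \mrt\ designed precisely to supply its hypotheses. The genuine difficulty lies elsewhere, namely in producing specific \mrt s and then in estimating the ratio $|W(r,s)|/(|W_1(r,s)|\cdot|W_2(r,s)|)$ in terms of $\vol(Y(s))$, which is the content of the later sections.
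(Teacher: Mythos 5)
Your proposal is correct and is essentially identical to the paper's proof: both identify $W(r,s)$ as the deck group of $Y(s) \to Y(r)$, note that condition (A5) makes the algebraic intersection $Y_1(r) \cdot Y_2(r)$ nonzero so that $[Y_1(r)]$ pairs nontrivially with $[Y_2(r)]$ in Borel--Moore homology, and then apply Proposition \ref{prop:xue} with $\ca C = |Y_1(r)\cap Y_2(r)|^{-1}$. You spell out the verification of Xue's hypotheses (including the identification $\Ga_i(r)\cap\Ga(s) = \Ga_i(s)$ so that $\rho(\Ga_i(r)) = W_i(r,s)$) a bit more explicitly than the paper does, but the argument is the same.
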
 

\begin{proof}
As discussed in \S\ref{sec:xue}, condition (\ref{eqn:sign}) guarantees that the homology class $[Y_1(r)]$ in $H_d(Y(r);\Q)$ is nonzero, because it pairs nontrivially with the class $[Y_2(r)]$ in the Borel--Moore homology of $Y(r)$ with closed supports.

The group $W(r,s)$ is the deck group of the cover $Y(s)\ra Y(r)$. By Proposition \ref{prop:xue}, the number of linearly independent translates of $[Y_1(s)] \in H_d(Y(s);\Q)$ under $W(r,s)$ is at least 
\[\frac{1}{|Y_1(r)\cap Y_2(r)|}\cdot \frac{|W(r,s)|}{|W_1(r,s)|\cdot|W_2(r,s)|}\]
\end{proof}

\paragraph{Construction of \mrt s.} We prove Theorems \ref{thm:SL}--\ref{thm:ROS} by constructing \mrt s $(G,G_1,G_2,K,r)$ with suitable properties. While constructing \mrt s for general $G$ is challenging, many specific examples are known. We begin in \S\ref{sec:flats} by constructing $G_1<G$ as maximal $\R$-split tori $G_1(\R)\cong(\R^\ti)^n$ that are anisotropic over $\Q$.

Given $G$ and $G_1$, it is not always possible to choose $G_2$ so that condition (\ref{eqn:dimension}) holds. In \S\ref{sec:flat-homology} we build on the constructions of \S\ref{sec:flats} to explicitly construct $G_2 < G$ and $K<G(\R)$ satisfying (\ref{eqn:descends}) and (\ref{eqn:dimension}).

For $G$, $G_1$, and $G_2$ satisfying (\ref{eqn:compact})--(\ref{eqn:dimension}), condition (\ref{eqn:embedded}) holds quite generally. If $r$ is sufficiently large, then $Y_i(r)\ra Y(r)$ is an embedding by a result of Raghunathan; see \cite[Thm.\ E and its proof]{schwermer}. Similarly, one can find infinitely many primes $p$ so that if $r$ is a multiple of $p^k$ for $k$ sufficiently large, then $Y_i(r)$ is oriented for $i=1,2$; see \cite[Thm.\ F]{schwermer} and \cite[Prop.\ 2.2 and its proof]{rohlfs-schwermer}. When $G=\SL_{n+1}$, any prime $p$ admits such $k$, as argued in \cite[\S5]{ANP}. 

In special cases, it is known that one can choose $r$ so that condition (\ref{eqn:sign}) holds. This can be shown using an argument that originated in work of Millson \cite{millson} and Millson--Raghunathan \cite{millson-raghunathan}. We apply these techniques in \S\ref{sec:flat-homology}.

In light of Lemma \ref{lem:mrtbound}, to prove Theorems \ref{thm:SL}--\ref{thm:ROS} we are interested in bounding $\fr{|W(r,s)|}{|W_1(r,s)|\cdot|W_2(r,s)|}$ below. This is done in \S\ref{sec:counting}. We outline the approach here. By the definitions, there is a short exact sequence
\begin{equation}\label{eqn:congruence-SES}
1\ra W(r,s)\ra G(\Z)/\Ga(s)\ra G(\Z)/\Ga(r)\ra 1,
\end{equation}
and the group $G(\Z)/\Ga(t)$ is the image of $G(\Z)\hra\GL_d(\Z)\ra \GL_d(\Z/t\Z)$. We will provide lower bounds on the size of the image of $G(\Z)\ra\GL_d(\Z/s\Z)$, and upper bounds on the size of the images of $G_i(\Z)\ra\GL_d(\Z/s\Z)$ for $i=1,2$. For groups $G$ and $G_2$ we consider, these bounds will follow from strong approximation. Our groups $G_1$ are algebraic tori, for which strong approximation does not apply. %In the following sections, we will use strong approximation to give a lower bound on $|W(r,s)|$ and use an elementary argument to give an upper bound on $|W_i(r,s)|$. 

%Remark: Since we want \emph{upper} bounds on $W_i(r,s)$, it seems would could use the \emph{failure} of strong approximation to get a better lower bound than the one stated in Theorem \ref{thm:SL}.. But perhaps this is difficult to execute.. Also maybe there isn't an improvement, e.g.\ if the characteristic polynomial $\chi$ of $\tau_1$ has repeated roots over $\Z/p\Z$ then its centralizer get \emph{larger}.. 

%\subsection{Hyper-regular elements}\label{sec:hyperregular}

\section{Explicit construction of periodic maximal flats}\label{sec:flats}

Let $X=G(\R)/K$ be a symmetric space of noncompact type, and let $\Ga<G(\R)$ be a lattice. A \emph{flat} is a geodesically embedded Euclidean space $\bb E^i\hra X$. The maximal dimension of a flat is the real rank of $G(\R)$, which we denote by $n$. A maximal flat is preserved by a Cartan subgroup $H$ of $G(\R)$, and we call the flat \emph{periodic} if $\Ga\cap H$ is a lattice in $H$. In this case, $(\Ga\cap H)\bs H$ is a compact flat manifold and there is an immersion $(\Ga\cap H)\bs H \looparrowright \Ga\bs X$. %We refer to both $(\Ga\cap H)\bs H$ and it universal cover in $X$ as periodic maximal flats.  

There are general results of Prasad--Raghunathan \cite{prasad-raghunathan} that guarantee the existence of periodic maximal flats. These results, however, are not constructive and are consequently difficult to apply in our setting. The explicit construction we give below will facilitate the computation in \S\ref{sec:finite-lie} for the proofs of Theorems \ref{thm:SL}--\ref{thm:ROS}. 

%We give a simple way to construct fields with this property for each $n$. %It may be well-known that such fields exist for each $n$, but we did not find a reference, so we give a construction below. 

%The explicit construction we give below will facilitate the computation in \S\ref{sec:finite-lie} for the proofs of Theorems \ref{thm:SL}--\ref{thm:SO}. 

%Since we are interested in the specific cases of Theorems \ref{thm:SL}--\ref{thm:SO}, we can avoid using their abstract results, which are not constructive. Below we give content ourselves with concrete examples, which are better adapted to our arguments in later sections. 

To construct periodic maximal flats in our cases, it suffices to find a $\Q$-subgroup $G_1<G$ such that $G_1(\R)\cong(\R^\ti)^n$ and $G_1(\Z)<G_1(\R)$ is cocompact (equivalently, $G_1(\Z)$ contains a free abelian group of rank $n$). For then $G_1(\R)$ has maximal compact subgroup $K_1\cong \{\pm1\}^n$ and $G_1(\R)/K_1\cong\bb E^n$ embeds in $G(\R)/K$ as a maximal flat for any maximal compact subgroup $K\sbs G_(\R)$, which is periodic because $G_1(\Z)<G_1(\R)$ is cocompact. 

\begin{rmk}For $G=\SL_{n+1}$, it is easy to find $G_1$. Fix a totally real number field $E/\Q$ of degree $n+1$, and let $G_1=R_{E/\Q}^1(G_m)$ be the norm torus in the restriction of scalars group $R_{E/\Q}(G_m)$. The group $G_1(\Z)\doteq\ca O_E$ is cocompact in $G_1(\R)\cong(\R^\ti)^{n}$ by Dirichlet's unit theorem. For the computation in \S\ref{sec:finite-lie}, it is helpful to assume further that the ring of integers $\ca O_E$ contains a unit $\la\in\ca O_E^\ti$ that is also a primitive element, i.e.\ $E=\Q(\la)$. It may be well known that such fields exist for each $n$, but we did not find a reference, so we give a construction below. We also note that this simple construction for $G=\SL_{n+1}$ does not seem to work as well for $G=\SO(Q_n)$. 
\end{rmk}

In each case, we define $G_1$ as the centralizer of a certain matrix $A$. There are two basic constructions that will be useful for showing that $G_1(\Z)<G_1(\R)$ is cocompact. Let $F/\Q$ be a number field with ring of integers $\ca O_F$.
\begin{itemize} 
\item Fix a polynomial $\xi\in\ca O_F[x]$ that is irreducible in $F[x]$, and let $\la$ be a root of $\xi$. View the number field $E=F[x]/(\xi)\cong F(\la)$ as an $F$ vector space with basis $1,\la,\ldots,\la^{d-1}$, where $d=\deg(\xi)$. Multiplication by $E$ on itself defines a ring homomorphism 
\[\rho:E\ra M_d(F).\] The matrix $\rho(\la)$ belongs to $M_d(\ca O_F)$ and has characteristic polynomial $\xi$ (this is the so-called \emph{companion matrix} of $\xi$). If $\xi(0)=1$, then $\rho(\la)\in\SL_d(\ca O_F)$. In this case $\ca O_F[\la]^\ti$ maps into the centralizer of $\rho(\la)$ in $\GL_d(\ca O_F)$. 
\item Let $A\in\SL_d(\ca O_F)$ be a matrix whose characteristic polynomial $\chi$ is irreducible in $F[x]$, and let $\la$ be a root of $\chi$ (i.e.\ an eigenvalue of $A$). The field $F[x]/(\chi)$ can be identified with 
\begin{enumerate}
\item[(1)] the subalgebra of $M_d(F)$ generated by $A$ (the map $F[x]\ra M_d(F)$ defined by $x\mapsto A$ factors through $(\chi)$ by the Cayley--Hamilton theorem, and this ideal is the kernel because $\chi$ is the minimal polynomial of $A$), and
\item[(2)] the vector space $V=F^d$ viewed as a $F[x]$-module with $x$ acting by $A$ (for any nonzero $v\in V$, the vectors $v,Av,\ldots,A^{d-1}v$ are linearly independent -- $A$ acts irreducibly on $V$ because $\chi$ is irreducible).
\end{enumerate}
%Then $\chi$ is also the minimal polynomial of $A$, and the algebra generated by $A$ in $M_d(\ca O_F)$ is isomorphic to $F[x]/(\chi)\cong F(\la)$. Equivalently, there is an embedding (of $F$ algebras) $\rho:F(\la)\ra M_d(F)$ defined by $\rho(\la)=A$. 
%\end{itemize} 
\end{itemize} 

\subsection{Periodic maximal flats for $\SL_{n+1}$} \label{sec:flats-SL}

Fix a polynomial 
\[\xi=x^{n+1}+a_nx^n+\cdots+a_{1}x+1\]
with integer coefficients $a_i\in\Z$, and assume that $\xi$ is irreducible in $\Q[x]$ and has only real roots. Consider the companion matrix 
\[A=\left(\begin{array}{ccccc}
0&0&\cdots&0&-1\\
1&0&\cdots&0&-a_1\\
0&1&\cdots&0&-a_2\\
\vdots&\vdots&\ddots&\vdots&\vdots\\
0&0&\cdots&1&-a_n
\end{array}\right)\in\SL_{n+1}(\Z).\]

\begin{lem}\label{lem:SL-centralizer}
For $\xi$ and $A$ as above, define $G_1<\SL_{n+1}$ be the centralizer of $A$. Then $G_1(\R)\cong(\R^\ti)^n$, and $G_1(\Z)$ is cocompact in $G_1(\R)$. 
\end{lem}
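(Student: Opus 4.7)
The plan is to identify $G_1$ with the norm-one torus of the number field $E=\Q[x]/(\xi)$ and then deduce both claims from classical facts: the structure of the unit group of a totally real number field and Dirichlet's unit theorem.

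\textbf{Step 1: Identify the centralizer algebra.} Because $A$ is a companion matrix, its minimal polynomial equals its characteristic polynomial $\xi$. Hence $A$ is non-derogatory, and its centralizer in the matrix algebra $M_{n+1}(\Q)$ is exactly the cyclic subalgebra $\Q[A]$. Since $\xi$ is irreducible over $\Q$, the map $\Q[x]/(\xi)\to \Q[A]$, $x\mapsto A$, is an isomorphism of $\Q$-algebras, so $\Q[A]\cong E:=\Q(\la)$ where $\la$ is any root of $\xi$. Under this isomorphism, the restriction of $\det:\GL_{n+1}\to\GL_1$ to $\Q[A]^{\times}$ corresponds to the norm $N_{E/\Q}:E^{\times}\to\Q^{\times}$. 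Therefore $G_1\cong R_{E/\Q}^1(\GL_1)$, the norm-one torus of $E$.

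\textbf{Step 2: Real points.} Since $\xi$ is irreducible with only real roots, $E$ is a totally real number field of degree $n+1$, so $E\otimes_\Q\R\cong\R^{n+1}$ as $\R$-algebras. Consequently
\[
G_1(\R)\;\cong\;\bigl\{(x_1,\ldots,x_{n+1})\in(\R^{\times})^{n+1}:\,x_1\cdots x_{n+1}=1\bigr\},
\]
which is isomorphic to $(\R^{\times})^n$ via projection onto the first $n$ coordinates.

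\textbf{Step 3: Cocompactness.} Since $\xi(0)=1$, the element $\la$ is a unit in the ring of integers $\ca O_E$, and the matrix $A$ corresponds to $\la$ under the isomorphism $\Q[A]\cong E$. Let $\ca O:=M_{n+1}(\Z)\cap\Q[A]$; this is an order in $E$ containing $\Z[\la]$. The integral centralizer of $A$ in $\SL_{n+1}(\Z)$ is $G_1(\Z)=\ca O^{\times,1}$, the norm-one units of $\ca O$. By Dirichlet's unit theorem applied to the order $\ca O$ in the totally real field $E$ of degree $n+1$, the group $\ca O^{\times}$ has rank $n$. The norm-one units form a subgroup of index at most $2$, hence also have rank $n$. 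The logarithmic embedding identifies $\ca O^{\times,1}$ with a cocompact lattice in the identity component $G_1(\R)^0\cong\R^n$, so $G_1(\Z)$ is cocompact in $G_1(\R)$.

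\textbf{Main obstacle.} None of the steps involves a deep new idea; the step requiring the most care is Step 1, where one must check that the $\Q$-algebraic group $G_1$ (defined as the centralizer scheme of $A$) really is the norm-one torus of $E$ and that the determinant corresponds to the field norm, so that Step 3 can invoke Dirichlet cleanly. A minor subtlety is that the integral centralizer is the norm-one units of an order $\ca O\subseteq\ca O_E$ that may be strictly smaller than $\ca O_E$, but this does not affect the rank or cocompactness conclusion.
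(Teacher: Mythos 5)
Your proof is correct and follows essentially the same route as the paper: identify the centralizer algebra $\Q[A]$ with the totally real field $E=\Q[x]/(\xi)$, deduce $G_1(\R)\cong(\R^\ti)^n$ from the splitting of $E\otimes_\Q\R$ (the paper phrases this via diagonalizability of $A$ over $\R$, which amounts to the same thing), and obtain cocompactness from Dirichlet's unit theorem applied to an order in $E$ (the paper uses $\Z[A]$ where you use $M_{n+1}(\Z)\cap\Q[A]$, but both are orders of the same rank).
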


%The lemma leads to a periodic maximal flat. Consider an inner product on $\R^{n+1}$ with respect to which the eigenspaces of $\tau$ are orthogonal. Let $K\cong\SO(n+1)$ be the isometry group of this inner product. Then $G_1(\R)\cap K\cong\{\pm1\}^n$, 

%To prepare for the proof, consider the field $F=\Q[x]/(\xi)$. Fixing the $\Q$-basis $1,x,\ldots,x^n$ for $F$, multiplication by $F$ on itself defines a ring homomorphism $\rho:F\ra M_{n+1}(\Q)$. Note in particular that $\rho(x)=A$. 

\begin{proof}
Since $A$ has irreducible characteristic polynomial and real roots, $A$ is diagonalizable over $\R$. This implies that $G_1(\R)\cong(\R^\ti)^n$. 

To show $G_1(\Z)$ is cocompact in $G_1(\R)$, we show that $G_1(\Z)$ contains an abelian subgroup of rank $n$. Consider the subalgebra $\Q[A]\sbs M_{n+1}(\Q)$ generated by $A$. As mentioned above, $\Q[A]$ is isomorphic to the field $E=\Q[x]/(\xi)$. The group $\Z[A]^\ti$ is contained in $\GL_{n+1}(\Z)$ and centralizes $A$. The group $\Z[A]^\ti\cap\SL_{n+1}(\Z)$ has finite index in $\Z[A]^\ti$ and is contained in $G_1(\Z)$, so it suffices to show $\Z[A]^\ti$ has rank $n$. For this, note that $\Z[A]^\ti\cong\Z[x]/(\xi)$ has the same rank as $\ca O_E^\ti$, which is $n$ by Dirichlet's unit theorem; cf.\ \cite[Ch.\ I, Thms.\ 7.4 and 12.12]{neukirch}. 
%is commensurable $\ca O_E^\ti$,  the ring of integers $\ca O_E$, where As mentioned above, $E\cong\Q[x]/(\xi)$, where $\la$ is a root of $\xi$. In particular, $\Z[\la]^\ti\sbs E^\ti$ consists of matrices in $\GL_{n+1}(\Z)$ that commute with $A$
%
%set $E=\Q[x]/(\xi)$ we show that $G_1(\Z)$ contains an abelian subgroup of rank $n$. 
%For this, let $\la$ be a root of $\xi$ (so that $\Q[x]/(\xi)\cong\Q(\la)$), and observe that $\rho:F\ra M_{n+1}(\Q)$ restricts to a homomorphism
%\[\big(\Z[\la])^\ti\ra\GL_{n+1}(\Z)\]
%whose image (when intersected with $\SL_{n+1}(\Z)$) is contained in the centralizer of $A=\rho(x)$. The group $\big(\Z[\la])^\ti$ has finite index in $\ca O_F^\ti$ (because $\Z[\la]$ has finite index in $\ca O_F$)\footnote{\texttt{https://math.stackexchange.com/questions/91405/subrings-of-finite-index-and-units}}, so it has rank $n$ by Dirichlet's unit theorem. This shows $G_1(\Z)$ contains $\Z^n$, as desired. 
\end{proof}

\begin{rmk}[Irreducible is necessary]
If $\xi$ is reducible and square-free with real roots, then again $G_1(\R)=(\R^\ti)^n$, but now $G_1(\Z)$ has rank $<n$ and hence is not cocompact. For example, suppose $A$ is a block matrix $B\oplus C$, where $B,C$ have irreducible characteristic polynomials which are relatively prime. Then $A$ can be diagonalized over $\R$ using a block diagonal matrix, and this implies that its centralizer is also block diagonal. Now one computes $\rank G_1(\Z)<n$ using Dirichlet's unit theorem similar to the proof of Lemma \ref{lem:SL-centralizer}. 
\end{rmk}

%\begin{prop}
%For each $n\ge0$, there is a degree-$(n+1)$ monic polynomial $\xi\in\Z[x]$ that is irreducible in $\Q[x]$, has only real roots, and satifies $\xi(0)=1$. 
%\end{prop}

Next we show that for each $n\ge0$, there is a degree-$(n+1)$ monic polynomial $\xi\in\Z[x]$ that is irreducible in $\Q[x]$, has only real roots, and satisfies $\xi(0)=1$. 
%there exist polynomials $\xi$ with the desired properties (integer coefficients, irreducible, monic, $\xi(0)=1$, and with only real roots). 
There are likely several arguments for this, but we were not able to find a reference. Below we make a few remarks, and then give explicit examples in Lemma \ref{lem:irreducible-poly}. 

\begin{rmk}\label{rmk:rivin}
The characteristic polynomial of a ``random" element in $\SL_{n+1}(\Z)$ is irreducible by Rivin \cite{rivin}, but typically one expects the characteristic polynomial will have non-real roots. It would be nice to show that a random symmetric matrix in $\SL_{n+1}(\Z)$ has irreducible characteristic polynomial (since then the roots are necessarily real by the spectral theorem). There seem to be difficulties to adapting Rivin's argument \cite[Thm.\ 5.2]{rivin} to show this. 
\end{rmk}

\begin{rmk}
If $E/\Q$ is a number field and $\alpha\in\ca O_E^\ti$ is a unit and also $\alpha$ is a primitive element ($E=\Q(\al)$), then the minimal polynomial of $\al$ has the desired properties. However, not every number field has a primitive element that's also a unit \cite{primitive-unit}. %(In particular, it does not seem sufficient to observe that there exists a totally real number field of any degree.) 
\end{rmk}

\begin{rmk}
Kedlaya \cite{kedlaya} constructs for each $n=r+2s$, monic, irreducible polynomials $\xi\in\Z[x]$ so that the number field $F=\Q[x]/(\xi)$ has $r$ real embeddings and $s$ complex embeddings, and such that $\ca O_F=\Z[x]/(\xi)$. The polynomials $\xi$ are somewhat complicated, and we did not check if Kedlaya's construction includes examples with $\xi(0)=1$. In any case, the examples we give below are quite simple.
\end{rmk}

\begin{lem}\label{lem:irreducible-poly}
Fix $n\ge6$, and let $a_0,a_1,\ldots,a_n$ be integers such that 
\[0 = a_0<a_1<a_2<\cdots<a_n\>\>\>\text{ and }\>\>\>a_{i+1}-a_i > 2.\]
Then the degree-$(n+1)$ polynomial
\begin{equation}\label{eqn:irreducible-poly}\xi=1+x(x-a_1)\cdots(x-a_n) \in\Z[x]\end{equation}
has only real roots and is irreducible in $\Q[x]$. 
\end{lem}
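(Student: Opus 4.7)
The plan is to treat real-rootedness and irreducibility separately. Write $\xi(x) = 1 + f(x)$ where $f(x) = \prod_{i=0}^{n}(x - a_i)$. For real-rootedness, since $\xi(a_i) = 1 > 0$ for each $i$, it suffices to locate a negative value of $\xi$ in each bounded interval $(a_i, a_{i+1})$ on which $f$ is negative. I would take the midpoint $m = (a_i + a_{i+1})/2$: the hypothesis $a_{i+1} - a_i > 2$ immediately gives $|m - a_i| = |m - a_{i+1}| > 1$, and the same gap hypothesis applied iteratively yields $|m - a_j| > 1$ for every remaining $j$ (indeed such factors exceed $3$). Hence $|f(m)| > 1$, so $\xi(m) = 1 + f(m) < 0$, and the intermediate value theorem produces two real roots of $\xi$ in $(a_i, a_{i+1})$. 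Summing over all negative bounded intervals, together with one additional root in $(-\infty, a_0)$ when $n$ is even (where $f \to -\infty$), accounts for exactly $n+1$ real roots of $\xi$.

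For irreducibility, suppose toward contradiction that $\xi = gh$ with $g, h \in \Z[x]$ monic and nonconstant (Gauss's lemma). Since $\xi(a_i) = 1$, each pair $(g(a_i), h(a_i))$ is either $(1,1)$ or $(-1,-1)$. Partition the indices into $S_+ \sqcup S_-$ accordingly. The polynomial $g - 1$ has $|S_+|$ distinct roots $\{a_i : i \in S_+\}$, so $|S_+| \leq \deg g$; similarly $|S_-| \leq \deg g$, and by symmetry both are $\leq \deg h$. Together with $|S_+| + |S_-| = n + 1 = \deg g + \deg h$, these inequalities force $\deg g, \deg h \geq (n+1)/2$. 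When $n$ is even the total strictly exceeds $n + 1$: contradiction.

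When $n$ is odd the inequalities are tight, so $\deg g = \deg h = |S_+| = |S_-| = (n+1)/2$, and hence $g - 1 = \prod_{i \in S_+}(x - a_i)$ while $g + 1 = \prod_{i \in S_-}(x - a_i)$. Subtracting and evaluating at any $a_k$ with $k \in S_-$ gives $\prod_{i \in S_+}(a_k - a_i) = -2$. But the $a_i$ are integers with $a_{i+1} - a_i \geq 3$, so $|a_k - a_i| \geq 3$ for every $i \neq k$; the left-hand side thus has absolute value at least $3^{(n+1)/2}$, which exceeds $2$ for all $n \geq 1$ and in particular for $n \geq 6$, giving the contradiction. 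The main obstacle is this odd-$n$ case: pure degree-counting is inconclusive, and the argument depends essentially on integrality together with the strict gap condition $a_{i+1} - a_i > 2$ (equivalently, $\geq 3$ over $\Z$) to eliminate the remaining putative factorization.
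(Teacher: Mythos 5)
Your proof is correct, and your irreducibility argument takes a genuinely different route from the paper's. The paper's real-rootedness argument is essentially the same as yours: evaluate $\xi$ at midpoints $(a_{i-1}+a_i)/2$ and at a point beyond each end, observe alternating signs via the gap hypothesis, and invoke the intermediate value theorem. For irreducibility, however, the paper cites Polya's irreducibility criterion (\cite[Thm.\ 2.2.8]{prasolov}): since $\xi$ takes the value $1$ at the $n+1$ integers $0, a_1, \ldots, a_n$ and $1 < k!/2^k$ once $k = \lfloor(n+2)/2\rfloor \geq 4$, Polya gives irreducibility for $n \geq 6$. You instead give a self-contained factorization-by-values argument: Gauss's lemma reduces to a monic factorization $\xi = gh$ over $\Z$, the constraints $g(a_i)h(a_i) = 1$ partition the nodes into $S_+ \sqcup S_-$ with $|S_\pm| \leq \min(\deg g, \deg h)$, and degree-counting kills the even-$n$ case outright; in the odd-$n$ case the tight inequalities pin down $g \pm 1$ as explicit products, and evaluating their difference at a node forces an integer with absolute value $\geq 3^{(n+1)/2}$ to equal $2$, which is absurd. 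What your approach buys: it avoids invoking Polya as a black box, and it actually establishes the result for all $n \geq 1$, making the hypothesis $n \geq 6$ unnecessary (the paper flags that ad hoc examples cover $n \leq 5$; your argument dispenses with that caveat). The paper's approach is shorter once Polya is available, but yours is more elementary and strictly more general in the range of $n$ it covers.
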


Lemma \ref{lem:irreducible-poly} only gives examples for $n\ge6$, but it is easy to construct ad hoc examples for $n\le 5$. 
%Note also that the polynomial $\xi$ in (\ref{eqn:irreducible-poly}) is monic and satisfies $\xi(0)=1$, so $\xi$ satisfies the conditions outlined above. 
The proof of Lemma \ref{lem:irreducible-poly} uses the following irreducibility criterion, which can be found in \cite[Thm.\ 2.2.8]{prasolov}. 

\begin{thm}[Polya irreducibility criterion]\label{thm:polya}
Let $\xi$ be a degree-$d$ polynomial and let $k=\lfloor\fr{d+1}{2}\rfloor$. If there are distinct integers $b_0,\ldots,b_{d-1}$ such that 
\[0<|\xi(b_i)|<\frac{k!}{2^k}\>\>\>\text{ for }\>\>\>0\le i\le d-1,\]
then $\xi$ is irreducible in $\Q[x]$. 
\end{thm}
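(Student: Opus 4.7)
The proof splits into two independent parts: showing $\xi$ has only real roots, and deducing irreducibility from Theorem \ref{thm:polya}.

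For reality of roots, write $p(x) = \prod_{j=0}^{n}(x - a_j)$ with $a_0 = 0$, so that $\xi = 1 + p$ and $\xi(a_i) = 1$ for every $i$. The plan is to evaluate $\xi$ at the integer $a_i + 1$, which lies strictly inside the gap $(a_i, a_{i+1})$ since the hypothesis $a_{i+1} - a_i > 2$ forces the integer difference to satisfy $a_{i+1} - a_i \ge 3$. Direct computation gives
\[
p(a_i + 1) = \prod_{j \neq i}(a_i + 1 - a_j),
\]
a product in which the factor at $j = i$ is $1$, the factors with $j > i$ are each $\le -2$, and the factors with $j < i$ are each $\ge 4$. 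Thus $|p(a_i+1)| \ge 2$, and the sign of $p(a_i + 1)$ is $(-1)^{n-i}$, matching the sign of $p$ on the interior of $(a_i, a_{i+1})$. For each $i$ with $n-i$ odd this gives $\xi(a_i + 1) \le -1$, and the intermediate value theorem produces two real roots of $\xi$ in that gap, bracketed by $\xi(a_i) = \xi(a_{i+1}) = 1$.

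A parity count then closes the root tally. When $n$ is odd, the $(n+1)/2$ negative gaps yield $n+1$ real roots. When $n$ is even, the $n/2$ negative gaps yield $n$ roots, and the remaining root comes from $(-\infty, 0)$: by Rolle's theorem all critical points of $p$ lie in $(0, a_n)$, so $p$ is monotonic on $(-\infty, 0)$, and the leading behavior drives $p \to -\infty$, producing exactly one additional sign change of $\xi$. In either case $\xi$ has $n+1$ real roots, saturating its degree.

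For irreducibility, I apply Theorem \ref{thm:polya} with the $n+1$ distinct test integers $b_i = a_i$. At each we have $|\xi(a_i)| = 1$, so Polya's hypothesis reduces to $1 < k!/2^k$, where $k = \lfloor (n+2)/2 \rfloor$. This inequality first holds at $k = 4$ (giving $4!/2^4 = 3/2$), so it holds precisely when $n \ge 6$, which is exactly the hypothesis of the lemma. The main bookkeeping hurdle I anticipate is the parity-based count of real roots, in particular the delicate accounting of the outer interval $(-\infty, 0)$ when $n$ is even; by contrast, the bound $|p(a_i+1)| \ge 2$ is a routine consequence of the gap hypothesis, and the Polya step is nearly automatic once the test set $b_i = a_i$ is identified.
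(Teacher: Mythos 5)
Your proposal does not prove the statement in question. The statement is Polya's irreducibility criterion (Theorem \ref{thm:polya}): a general criterion for an integer polynomial of degree $d$ to be irreducible over $\Q$, given that it takes small nonzero values at $d$ distinct integers. What you have written is instead a proof of Lemma \ref{lem:irreducible-poly}, the application of that criterion to the specific polynomial $\xi=1+x(x-a_1)\cdots(x-a_n)$; indeed your argument explicitly invokes Theorem \ref{thm:polya} as an ingredient (``deducing irreducibility from Theorem \ref{thm:polya}''), so as a proof of the theorem itself it is circular and contains none of the needed content. For the record, the paper does not prove this theorem either --- it cites Prasolov --- and a proof would run along different lines entirely: by Gauss's lemma reduce to a factorization $\xi=gh$ in $\Z[x]$ with $\deg g=m\ge\deg h\ge 1$, so $m\ge k=\lfloor\frac{d+1}{2}\rfloor$; note that $h(b_i)$ is a nonzero integer, hence $|g(b_i)|\le|\xi(b_i)|<\frac{k!}{2^k}\le\frac{m!}{2^m}$ at all $d$ points; and contradict this via the Lagrange-interpolation (finite-difference) estimate that a degree-$m$ polynomial in $\Z[x]$ must satisfy $\max_i|g(x_i)|\ge\frac{m!}{2^m}$ over any $m+1\le d$ distinct integers, since its leading coefficient $\sum_i g(x_i)/\prod_{j\ne i}(x_i-x_j)$ is a nonzero integer and $\sum_i\prod_{j\ne i}|x_i-x_j|^{-1}\le\frac{2^m}{m!}$. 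None of this appears in your write-up.

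As a side remark, what you did prove is essentially correct and is a reasonable variant of the paper's proof of Lemma \ref{lem:irreducible-poly}: you test $\xi$ at the integers $a_i+1$ and track parity (including the unbounded interval when $n$ is even), whereas the paper evaluates at the midpoints $m_i=\frac{a_{i-1}+a_i}{2}$ together with $m_0=-2$ and $m_{n+1}=a_n+2$, which handles all sign changes uniformly without a separate parity discussion; the Polya step with test points $0,a_1,\ldots,a_n$ is identical. But that is not the statement you were asked to prove.
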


\begin{proof}[Proof of Lemma \ref{lem:irreducible-poly}]
We sketch the argument. To see that $\xi$ has $n+1$ real roots, let $m_0 = -2$, $m_i = \frac{a_{i-1}+a_{i}}{2}$ for $i=1,\dotsc, n$, and $m_{n+1} = a_n+2$. Then the assumption that $a_{i+1}-a_i > 2$ implies that the sign of $\xi(m_i)$ is $(-1)^{n-i}$, and so $\xi$ has $n+1$ real roots by the intermediate value theorem.

Next we explain why $\xi$ is irreducible. Observe that $\xi(0)=\xi(a_1)=\cdots=\xi(a_n)=1$. Since $1<\frac{k!}{2^k}$ for $k\ge4$, we can apply Polya's Theorem \ref{thm:polya} as long as $\lfloor\frac{n+2}{2}\rfloor\ge4$, i.e.\ $n\ge6$, to conclude that $\xi$ is irreducible. 
\end{proof}

\subsection{Periodic maximal flats for $R_{F/\Q}(\SL_2)$} \label{sec:flats-ROS}

This case can be treated similar to the $\SL_{n+1}$ case. Let $F/\Q$ be a totally real number field of degree $n\ge2$. 

Let $\xi=x^2+ax+1$ be a polynomial in $\ca O_F[x]$ that is irreducible in $F[x]$ and has real roots (equivalently, $a^2-4$ is positive and is not a square in $F$). Define 
\[A=\left(\begin{array}{cc}0&-1\\1&-a\end{array}\right)\in\SL_2(\ca O_F),\]
which has characteristic polynomial $\xi$. 

Consider $\ca H = \SL_2$ viewed as an algebraic group over $F$, and define $\ca H_1<\ca H$ to be the centralizer of $A$. Set $G=R_{F/\Q}(\ca H)$ and $G_1=R_{F/\Q}(\ca H_1)$. 

%Fix $A\in\SL_2(\ca O_F)$. Assume that the characteristic polynomial $\chi$ of $A$ is irreducible over $F$ and has a real root $\la$. Then

%$E:=F[x]/(\chi)\cong F(\la)$ is a totally real field. 

%Define $\ca H_1<\ca H$ to be the centralizer of $A$. If $G_1=R_{F/\Q}(\ca H_1)$, then $G_1(\R)\cong\R^\ti\ti\cd\ti\R^\ti$, which is embedded diagonally in $G(\R)=\SL_2(\R)\times\cdots\times\SL_2(\R)$.

\begin{lem}\label{lem:ROS-centralizer}
The group $G_1(\R)$ is isomorphic to $(\R^\ti)^n$ and is embedded diagonally in $G(\R)=\big(\SL_2(\R)\big)^n$. Also, $G_1(\Z)<G_1(\R)$ is cocompact. \end{lem}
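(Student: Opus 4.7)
My plan is to mimic the proof of Lemma \ref{lem:SL-centralizer}, using Dirichlet's unit theorem applied to the quadratic extension $E=F(\lambda)$. First, I would identify $G(\R)$ and $G_1(\R)$ via the $n$ real embeddings $\sigma_1,\ldots,\sigma_n\colon F\hra\R$. The defining property of restriction of scalars gives $G(\R)\cong\SL_2(\R)^n$ and $G_1(\R)\cong\prod_{i=1}^n\Cent_{\SL_2(\R)}(\sigma_i(A))$. Because $F$ is totally real and $a^2-4$ is a totally positive non-square in $F$ (which is what the hypotheses on $\xi$ encode), each matrix $\sigma_i(A)$ has positive discriminant $\sigma_i(a)^2-4$, hence two distinct real eigenvalues, so its centralizer in $\SL_2(\R)$ is a maximal $\R$-split torus, conjugate to $\{\diag(t,t^{-1}):t\in\R^\ti\}\cong\R^\ti$. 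Combining the $n$ factors gives $G_1(\R)\cong(\R^\ti)^n$, embedded in $G(\R)=\SL_2(\R)^n$ as a product of Cartan subgroups, one in each $\SL_2(\R)$ factor, which is the ``diagonal'' embedding claimed.

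Next I would describe $G_1$ algebraically using the second construction from the bullet list preceding this subsection. Since $\xi$ is irreducible in $F[x]$, the subalgebra $F[A]\subset M_2(F)$ is the quadratic field extension $E:=F(\lambda)\cong F[x]/(\xi)$, and $E$ is totally real of degree $2n$ over $\Q$. The centralizer of $A$ in $\GL_2(F)$ is $F[A]^\ti\cong E^\ti$; under the identification $F^2\cong E$ via the basis $\{1,\lambda\}$, multiplication by $x\in E^\ti$ has determinant $N_{E/F}(x)$, so $\ca H_1(F)\cong E^1:=\ker\bigl(N_{E/F}\colon E^\ti\to F^\ti\bigr)$. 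Passing to integral points identifies $G_1(\Z)=\ca H_1(\ca O_F)$ with $\ca O_F[\lambda]^\ti\cap E^1$, which has finite index in $\ca O_E^1:=\ca O_E^\ti\cap E^1$ since $\ca O_F[\lambda]$ is a $\Z$-order in $E$.

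Finally, a rank count completes the argument. Dirichlet's unit theorem gives $\rank\ca O_E^\ti=2n-1$ and $\rank\ca O_F^\ti=n-1$, so
\[\rank\ca O_E^1\ge(2n-1)-\rank N_{E/F}(\ca O_E^\ti)\ge(2n-1)-(n-1)=n.\]
On the other hand, $G_1(\Z)$ sits discretely in the abelian Lie group $G_1(\R)\cong(\R^\ti)^n$, which has real dimension $n$, forcing $\rank G_1(\Z)=n$. Since $E$ is totally real, the only roots of unity in $\ca O_E^\ti$ are $\pm1$, so $G_1(\Z)$ projects under the logarithm map to a rank-$n$ discrete subgroup of $G_1(\R)^\circ\cong\R^n$, hence forms a lattice, and $G_1(\Z)<G_1(\R)$ is cocompact. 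The main technical care I anticipate needing is in the finite-index comparisons between $\ca O_F[\lambda]^\ti$, $\ca O_E^\ti$, and their respective norm-one subgroups; this is routine from the theory of orders in number fields and does not affect the rank computation.
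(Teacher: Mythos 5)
Your proof is correct and follows essentially the same route as the paper's: identify $G_1(\R)$ via restriction of scalars as a product of split Cartan subgroups (each $\cong\R^\ti$), realize the centralizer of $A$ algebraically inside the quadratic extension $E=F(\lambda)$, and carry out the rank count $(2n-1)-(n-1)=n$ via Dirichlet's unit theorem applied to the norm map $N_{E/F}=\det$. Two small points worth noting: the paper is slightly more careful to say that $G_1(\Z)$ and $\ca H_1(\ca O_F)$ are merely commensurable rather than equal (this does not affect the rank or cocompactness argument), and you correctly make explicit that the condition ``real roots'' must hold at every real place of $F$ (i.e.\ $a^2-4$ totally positive), which is implicit in the paper's use of restriction of scalars.
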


\begin{proof} The proof is similar to the proof of Lemma \ref{lem:SL-centralizer}. Since $A$ is diagonalizable in $\ca H_1(\R)$, its centralizer is isomorphic to $\R^\ti$. Applying restriction of scalars gives the first statement. 

For the second statement, we prove that $G_1(\Z)$ contains a rank-$n$ abelian group. The groups $G_1(\Z)$ and $\ca H_1(\ca O_F)$ are commensurable, so it suffices to work with $\ca H_1(\ca O_F)$.

Similar to the proof of Lemma \ref{lem:SL-centralizer}, consider the subalgebra $F[A]\sbs M_{2}(F)$ generated by $A$, which is isomorphic to the field $E=F[x]/(\xi)$. The group $(\ca O_F[A])^\ti$ is contained in $\GL_{2}(\ca O_F)$ and centralizes $A$. Consequently, $\ca H_1(\ca O_F)$ contains the kernel of the composition 
\[\big(\ca O_F[A])^\ti\ra\GL_2(\ca O_F)\xra{\det}\ca O_F^\ti.\]
This kernel has the same rank as the kernel of the norm map
\[\ca O_E^\ti\ra\GL_2(\ca O_F)\xra{\det}\ca O_F^\ti,\] 
which is $n=(2n-1)-(n-1)$ by Dirichlet's unit theorem. (Note that the norm map restricted to $\ca O_F^\ti$ is the map $\la\mapsto\la^2$, so the map $\ca O_E^\ti\ra\ca O_F^\ti$ is virtually surjective.) 
\end{proof}

%By basic linear algebra, it's possible to find an eigenvector $\tau_1z=\la_1z$ with $z=(z_1,z_2)\in\ca O_E^2$; see \cite[Prop.\ 1]{wallace}. Then $M=\ca O_F\{z_1,z_2\}$ is an ideal in $\ca O_F[\la_1]$ and $z_1,z_2$ are linearly independent over $F$; see \cite[Prop.\ 7,8]{wallace}. Then the action of $\big(\ca O_F[\la]\big)^\ti$ on $M$ defines an embedding
%\[\big(\ca O_F[\la]\big)^\ti\ra\SL_2^{\pm}(F)\]
%that sends $x\in F[x]/(\chi_1)\equiv E$ to a matrix conjugate to $\tau_1$. Observe that $\big(\ca O_F[\la])^\ti/\ca O_F^\ti$ has the same rank as $\ca O_E^\ti/\ca O_F^\ti$, which is $n=(2n-1)-(n-1)$ by Dirichlet's unit theorem. This shows that $G_1(\Z)$ has rank $n$, which proves the claim. 

%Each embedding $F\ra\R$ extends to an embedding $\SL_2(F)\ra\SL_2(\R)$, and we that $\tau_1$ is hyperbolic (i.e.\ $|\text{trace}|>2$) with respect to each embedding; this in particular implies that the eigenvalues of $\tau_1$ are real. Define $G_1<G$ to be the centralizer of $\tau_1$. Then $G_1(\R)\cong\R^\ti\ti\cd\ti\R^\ti$, which is embedded diagonally in $G(\R)=\SL_2(\R)\times\cdots\times\SL_2(\R)$. 

%Assume that the characteristic polynomial $\chi_1$ is irreducible over $F$. The field $E=F[x]/(\chi_1)$ is a quadratic extension of $F$, and $E/\Q$ is also totally real (because $E=F(\la)$, where $\la\in\R$ is an eigenvalue of $\tau_1$). Then rank of $G_1(\Z)$ (as an abelian group) is $n$: 

%In this case, we take both $G_1$ and $G_2$ to be maximal $\R$-split, $\Q$-anisotropic tori. 

\begin{rmk}[Example]
Consider $F=\Q(\sqrt{2})$ and $\xi=x^2-4x+1$. Then   
\[A=\left(\begin{array}{cc}0&-1\\1&4\end{array}\right)\in\SL_2(\ca O_F),\] 
which has $\la=2+\sqrt{3}$ as an eigenvalue. Here $\ca O_F=\Z\{1,\sqrt{2}\}$ and $E\cong\Q(\sqrt{2},\sqrt{3})$. The ring $\ca O_F[A]$ is isomorphic to 
\[\ca O_F[\la]=\Z\{1,\sqrt{2},\sqrt{3},\sqrt{6}\},\] which has finite index in $\ca O_E=\Z\{1,\sqrt{2},\sqrt{3},\frac{\sqrt{2}+\sqrt{6}}{2}\}$. We can see in this case that the centralizer $\ca H_1(\ca O_F)$ has rank 2 since $\la$ and $(5-4\sqrt{2})+(2\sqrt{2})\la$ belong to $\ca O_F[\la]^\ti$ and are linearly independent. The corresponding commuting matrices $A$ and $(5-4\sqrt{2})I+(2\sqrt{2})A$ in $\SL_2(\ca O_F)$ are
\[\left(\begin{array}{cc}0&-1\\1&4\end{array}\right)\>\>\>\text{ and }\>\>\>\left(\begin{array}{cc}
5 - 4 \sqrt{2}& -2\sqrt{2}\\
2 \sqrt{2}& 5 + 4\sqrt{2}
\end{array}\right).\]
\end{rmk}

\subsection{Periodic maximal flats in $\SO(Q_n)$}\label{sec:flats-SO}

Define 
\begin{equation}\label{eqn:hyperbolic-form}Q_n=\left(\begin{array}{cc}0&I_n\\I_n&0\end{array}\right)\end{equation}
where $I_n$ is the $n\ti n$ identity matrix. The bilinear form defined by $Q_n$ is the unique unimodular, even integral
symmetric bilinear %% DHS
form with signature $(n,n)$. For a ring $R$, we define
\[\SO(Q_n;R)=\{g\in\SL_{2n}(R): g^tQ_ng=Q_n\}.\]

Explicitly constructing periodic maximal flats in this case is more subtle than the preceding cases because there is no known simple characterization of the characteristic polynomials of elements in $\SO(Q_n)$. See e.g.\ \cite{gross-mcmullen} and \cite{bayer}. The main result we use follows.

\begin{lem}\label{lem:SO-centralizer}
Fix $n\ge2$, and fix a rational
symmetric bilinear %% DHS
form $Q$ of signature $(n,n)$. Assume that $A\in\SO(Q;\Z)$ has characteristic polynomial $\chi$ that's irreducible in $\Q[x]$ and has only real roots. Let $G_1<\SO(Q)$ be the centralizer of $A$.  Then $G_1(\R)\cong(\R^\ti)^n$ and $G_1(\Z)$ is cocompact in $G_1(\R)$. 
\end{lem}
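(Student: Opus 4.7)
My plan is to mirror the strategy of Lemmas \ref{lem:SL-centralizer} and \ref{lem:ROS-centralizer}, exhibiting $G_1$ as a norm-one torus attached to a quadratic extension of totally real number fields. First I would view $V=\Q^{2n}$ as the space on which $A$ acts with irreducible characteristic polynomial $\chi$ of degree $2n$; by Cayley--Hamilton, the subalgebra $\Q[A]\sbs M_{2n}(\Q)$ is isomorphic to the field $E:=\Q[x]/(\chi)$, and $V$ acquires the structure of a one-dimensional $E$-vector space with $A$ acting as multiplication by a root $\la$ of $\chi$. Because $A$ acts irreducibly on $V$, the commutant theorem identifies the centralizer of $A$ in $\GL(V)$ with $E^\ti$ acting by $E$-scalar multiplication.

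The key step is to cut this down to $\SO(Q)$. From $A^tQ=QA^{-1}$, the rule $a\mapsto Q^{-1}a^tQ$ defines an involution $\sigma$ of $\Q[A]\cong E$ sending $A\mapsto A^{-1}$, and I would set $F:=E^\sigma$. Since $\la\neq\la^{-1}$ for $n\ge2$, $\sigma$ is nontrivial, giving $[E:F]=2$ and $[F:\Q]=n$. I would then verify the compatibility relation $Q(ev,w)=Q(v,\sigma(e)w)$ for $e\in E$ and $v,w\in V$; nondegeneracy of $Q$ forces an element $g\in E^\ti$ to lie in $\SO(Q)$ exactly when $\sigma(g)g=N_{E/F}(g)=1$. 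Hence $G_1\cong R_{F/\Q}\bigl(R^1_{E/F}(\bb G_m)\bigr)$. I expect this identification, and in particular correctly pinning down the involution $\sigma$ and the equivalence of the orthogonality and norm-one conditions, to be the main obstacle; once it is in hand, the remaining steps are standard.

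The rest follows from classical number theory. Since $\chi$ has only real roots and $F=\Q(\la+\la^{-1})$, both $F$ and $E$ are totally real, and each of the $n$ real places $v$ of $F$ splits in $E$, so $E\otimes_{F,v}\R\cong\R\ti\R$ and $R^1_{E/F}(\bb G_m)(\R)\cong\R^\ti$ at every such place; taking restriction of scalars gives $G_1(\R)\cong(\R^\ti)^n$. For cocompactness, $R^1_{E/F}(\bb G_m)$ is $F$-anisotropic (the character lattice of a quadratic norm torus carries no Galois-fixed elements), so $G_1$ is $\Q$-anisotropic and Borel--Harish-Chandra yields cocompactness of $G_1(\Z)$ in $G_1(\R)$. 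Alternatively, $G_1(\Z)$ is commensurable with $\ker\bigl(N_{E/F}:\ca O_E^\ti\ra\ca O_F^\ti\bigr)$, which by Dirichlet's unit theorem has rank $(2n-1)-(n-1)=n=\dim G_1(\R)$, as desired.
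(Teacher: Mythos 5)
Your proposal is correct and lands on the same central idea as the paper's proof: identify $\Q[A]$ with the field $E=\Q[x]/(\chi)$, recognize the involution $\la\mapsto\la^{-1}$, let $F$ be its fixed field of degree $n$, and conclude that the relevant group is the norm-one torus of $E/F$. The differences are worth noting. To produce the norm-one description, the paper invokes Milnor's structure theorem \cite[Lem.\ 1.1]{milnor-isometries}, which supplies a Hermitian form $h:E\times E\ra E$ with $\Tr_{E/\Q}\circ h = Q$; you instead define $\sigma(a)=Q^{-1}a^tQ$ directly from $A^tQ=QA^{-1}$ and verify the adjointness $Q(ev,w)=Q(v,\sigma(e)w)$ by hand, which is more self-contained (and you should also note that $N_{E/F}(g)=1$ forces $\det(g)=N_{E/\Q}(g)=1$, so the centralizer automatically lies in $\SO$ rather than just $O$). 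For $G_1(\R)\cong(\R^\ti)^n$, the paper argues directly by exhibiting an isotropic eigenbasis with Gram matrix $Q_n$, whereas you deduce it from the torus structure by observing that each real place of $F$ splits in the totally real field $E$; both work, and yours makes the torus picture do double duty. For cocompactness, both proofs contain the same Dirichlet-unit rank count $(2n-1)-(n-1)=n$; your additional Borel--Harish-Chandra argument via $\Q$-anisotropy of $R_{F/\Q}(R^1_{E/F}(\bb G_m))$ is a clean alternative that the paper does not give. One small thing to keep in mind: your identification of $G_1$ with $R_{F/\Q}(R^1_{E/F}(\bb G_m))$ is checked on $\Q$-points; to conclude at the level of $\Q$-algebraic groups one should remark that the same computation works functorially (or over $\bar\Q$ with Galois descent), but this is routine.
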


%\begin{lem}\label{lem:SO-centralizer}
%Fix $n\ge2$. Assume that $A\in\SO(Q_n)$ has characteristic polynomial $\chi$ that's irreducible in $\Q[x]$ and has only real roots. Let $G_1<\SO(Q_n)$ be the centralizer of $A$.  Then $G_1(\R)\cong(\R^\ti)^n$ and $G_1(\Z)$ is cocompact in $G_1(\R)$. 
%\end{lem}

\begin{rmk}
We do not actually know if there always exists $A\in\SO(Q;\Z)$ with irreducible characteristic polynomial and only real roots. Generically one expects that the characteristic polynomial is irreducible by an argument similar to \cite[Thm.\ 5.2]{rivin}. (For $Q=Q_n$, one can use \cite{gross-mcmullen} to prove that the map from (orthogonal) matrices to characteristic polynomials is dominant as a map of varieties.) But similar to Remark \ref{rmk:rivin}, we are unaware of an argument that ensures that there are examples with only real roots. 
\end{rmk}

\begin{rmk}
From the proof we will see that we do not need to assume that $\chi$ is irreducible. For example, we have the same conclusion if $\chi=\chi_1\cdots\chi_j$ is a product of irreducible polynomials with real roots. We will use this observation to construct explicit examples in each $\SO(Q_n)$ after the proof of Lemma \ref{lem:SO-centralizer}.
\end{rmk}  %$Q=Q_1\oplus Q_2$, and $A=A_1\oplus A_2$ with $A_i\in\SO(Q_i)$ has irreducible characteristic polynomial and real roots. 

\begin{rmk}
The statement of Lemma \ref{lem:SO-centralizer} can be generalized to $Q$ of signature $(n,m)$ with minor changes. For example, if $Q'=Q\op P$ where $Q$ has signature $(n,n)$ and $P$ is positive (or negative) definite, then a conclusion similar to that of Lemma \ref{lem:SO-centralizer} holds for $A\in \SO(Q')$ of the form $A=A'\oplus\id$, where $A'\in\SO(Q)$ has irreducible characteristic polynomial with real roots. In this case the characteristic polynomial is not irreducible %(it has the form $\chi=\chi'(x-1)^k$) 
and the centralizer of $A$ is not a Cartan subgroup, rather $G_1(\R)$ is the product of $(R^\ti)^n$ with the compact group $\SO(P;\R)$. 
\end{rmk}

\begin{proof}[Proof of Lemma \ref{lem:SO-centralizer}]
%By assumption, $A$ has real eigenvalues not equal to $\pm1$, so $A$ can be diagonalized in $\SO(Q_n;\R)$, and this implies that $G_1(\R)\cong(\R^\ti)^n$. 
By assumption, the action of $A$ on $\R^{2n}$ has $2n$ distinct real eigenvalues, which occur in $(\la,1/\la)$ pairs with $\la\neq\pm1$. From this one deduces that there is an eigenbasis $u_1,\ldots,u_n,v_1,\ldots,v_n\in\R^{2n}$ consisting of isotropic vectors whose intersection matrix is $Q_n$. Then it is not hard to see that $G_1(\R)$ is isomorphic to $\SO(1,1)^n\cong(\R^\ti)^n$. 

%Let $u_1,\ldots,u_4\in\R^4$ be an orthogonal eigenbasis with $B(u_i)=\la_iu_i$. Since $B$ does not have $\pm1$ as eigenvalues, each $u_i$ is isotropic. Furthermore, since
%\[\pair{u_i,u_j}=\pair{\tau u_i,\tau u_j}=\la_i\la_j\pair{\tau_i,\tau_j},\]
%we conclude that $u_i$ and $u_j$ are orthogonal unless, $\la_j=1/\la_i$. Since $\pair{\cdot,\cdot}$ is nondegenerate, it follows that if $\la_j=1/\la_i$, then $\pair{u_i,u_j}\neq0$, which implies that $\R\{u_i,u_j\}\cong\R^{1,1}$. This shows there is a ($B$-invariant) decomposition $\R^d=\R^{1,1}\oplus\R^{1,1}$, which shows the form $Q$ has type $(n,n)$. 

Now we show that $G_1(\Z)$ is cocompact in $G_1(\R)$ by showing that $G_1(\Z)$ contains a rank-$n$ free abelian group. For this, we use some structural results of Milnor \cite{milnor-isometries}. Let $V\cong\Q^{2n}$ denote the standard representation of $\SO(Q;\Q)$. For any nonzero vector $v\in V$, the vectors $v,Av,\ldots,A^{2n-1}v$ are linearly independent (otherwise the action of $A$ on $V$ would be reducible, contradicting the fact that $\chi$ is irreducible). This allows us to identify $E:=\Q[x]/(\chi)$ with $V$ as $\Q[x]$-modules, with multiplication by $x$ on $\Q[x]/(\chi)$ corresponding to the action of $A$ on $V$. According to \cite[Lem.\ 1.1]{milnor-isometries}, there exists a unique 
bilinear %% DHS
form $h:E\ti E\ra E$ that is Hermitian with respect to the involution $e\mapsto \bar e$ on $E$ defined by $\bar x= x^{-1}$ and such that 
\[(u,v)\mapsto\Tr_{E/\Q}(h(u,v))\]
is equal to the given form $Q$ on $V$ (with respect to the identification of $E$ and $V$). From this we conclude that the action of $e\in E$ on $V$ preserves the form $Q$ if and only if $e\bar e=1$. 

Let $\la$ be a root of $\chi$. Under the map $E\cong\Q(\la)\ra M_{2n}(\Q)$ sending $\la$ to $A$, the subgroup of $\Z[\la]^\ti$ of elements satisfying $e\bar e=1$ maps into $G_1(\Z)$. We will show this subgroup has rank $n$. Since $\Z[\la]^\ti$ has finite index in $\ca O_E^\ti$, it suffices to prove the same statement for $\ca O_E^\ti$. 

Define $\Om^\pm=\{e\in \ca O_E^\ti: \bar e = e^{\pm 1}\}$, each of which is a subgroup of $\ca O_E^\ti$. We want to compute the rank of $\Om^-$. The product $\Om^+\ti\Om^-$ is a finite-index subgroup of $\ca O_E^\ti$ (this is a general fact about integral representations of $\Z/2\Z$). The group $\Om^+$ coincides with $\ca O_F$, where $F=\{e\in E:\bar e=e\}$ is the fixed field of the involution. Now $F/\Q$ has degree $n$ because $E/F$ has degree 2.
% (by Galois theory).
Then $\rank \Om^+=\rank \ca O_F^\ti=n-1$. Since $\rank\ca O_E^\ti=2n-1$, this implies that $\rank\Om^-=n$. This completes the proof. 
\end{proof}

Next we construct explicit examples $A\in \SO(Q_n)$ for every even $n\ge2$. Start with the case $n=2$ and consider the matrix  
\begin{equation}\label{eqn:orthogonal-matrix}
B=\left(\begin{array}{rrrr}
1& 1& -2& 2\\
1& 2& -4& 2\\
-2& -4& 10& -5\\
2& 2& -5& 5
\end{array}\right).
%\>\>\>\text{ and }\>\>\>
%C=\left(\begin{array}{rrrrrr}
%1& 1& 1& -2& 0& 2\\
%1& 2& 2& -4& -1& 3\\
%1& 2& 3& -5& -2& 3\\
%-2& -4& -5& 11& 2& -6\\
%0& -1& -2& 2& 4& -2\\
%2& 3& 3& -6& -2& 6
%\end{array}\right)
\end{equation}
%These matrices preserve the quadratic form $Q_m$ for $m=2,3$. The characteristic polynomials of $B$ and $C$ are
This matrix preserves the 
bilinear %% DHS: was `quadratic'
form $Q_2$ and has characteristic polynomial
\begin{equation}\label{eqn:orthogonal-poly}\chi=1 - 18 x + 43 x^2 - 18 x^3 + x^4.\end{equation}
%and
%\[\chi_C=1 - 27 x + 149 x^2 - 271 x^3 + 149 x^4 - 27 x^5 + x^6.
%\]
This polynomial has real roots (note $B$ is symmetric) and is irreducible in $\Q[x]$ (according to Mathematica).\footnote{The matrix $B$ was basically constructed as $U^tU$ with $U$ a random upper-triangular unipotent matrix in $\SO(Q_2)$.} 

%\begin{lem}\label{lem:SO-centralizer}
%Let $B$ be the matrix in (\ref{eqn:orthogonal-matrices}) and let $G_1$ be its centralizer in $\SO(Q_2)$. Then $G_1(\R)\cong(\R^\ti)^2$ and $G_1(\Z)$ is cocompact in $G_1(\R)$. An analogous statement holds for the matrix $C$ in (\ref{eqn:orthogonal-matrices}). 
%\end{lem}

Applying Lemma \ref{lem:SO-centralizer} to the matrix $B$ above, we obtain periodic maximal flats for $G(\Z)\bs G(\R)/K$ when $G=\SO(Q_2)$ and $K\sbs G(\R)$ is any maximal compact subgroup. (Note that $\SO(2,2)$ is isogenous to $\SL_2(\R)\ti\SL_2(\R)$, so this case is related to \S\ref{sec:flats-ROS}.)

%Applying Lemma \ref{lem:SO-centralizer} to the matrices $B$ and $C$ above, we obtain periodic maximal flats for $G(\Z)\bs G(\R)/K$ when $G=\SO(Q_n)$, $n=2,3$. Note that $\SO(2,2)$ is isogenous to $\SL_2(\R)\ti\SL_2(\R)$ and $\SO(3,3)$ is isogenous $\SL_4(\R)$. But which lattice in these latter groups is commensurable to $\SO(Q_m;\Z)$?

We can obtain periodic maximal flats for any even $n\ge2$ as follows. Fixing $n=2m$, consider the block diagonal matrix 
\begin{equation} \label{eqn:Ablockdefn}
A=B\oplus B^2\oplus \ldots\oplus B^m,
\end{equation}
and let $G_1 < \SO(Q_n)$ be the centralizer of $A$. Then by the proof of Lemma \ref{lem:SO-centralizer}, we conclude that $G_1(\R)\cong(\R^\ti)^n$ and $G_1(\Z)$ is cocompact in $G_1(\R)$. A key point here is that $A$ has no repeated eigenvalues; otherwise its centralizer is larger than a Cartan subgroup and the symmetric space for $G_1(\R)$ is not flat.
%Similarly if $n=2m+1$ is even, we can use $A=B\oplus\ldots\oplus B^{m-1}\oplus C$.  

The following proposition will be used in the proof of Theorem \ref{thm:SO} in \S\ref{sec:SO-proof}. 

\begin{prop}\label{prop:splitting}
For $\chi=1-18 x+43 x^2-18 x^3+x^4$ and $E=\Q[x]/(\chi)$, there are infinitely many primes $p\equiv3\pmod4$ that split completely in $E$. 
\end{prop}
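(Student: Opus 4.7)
The plan is to combine Chebotarev's density theorem with the fact that $E$ is totally real. Let $L \subset \C$ denote the Galois closure of $E/\Q$. Since $\chi$ has only real roots, every embedding of $E$ into $\C$ lands in $\R$, so $E$, and therefore $L$, is totally real. In particular $L \subset \R$, which forces $L \cap \Q(i) = \Q$.

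Form the compositum $M = L(i)$. The linear disjointness of $L$ and $\Q(i)$ over $\Q$ gives a natural isomorphism
\[
\Gal(M/\Q) \;\cong\; \Gal(L/\Q) \times \Gal(\Q(i)/\Q).
\]
Under this identification, the Frobenius conjugacy class of a prime $p$ unramified in $M$ corresponds to a pair $(\si_L, \si_i)$. Apply Chebotarev's density theorem to $M/\Q$ to produce a set of primes of density $\frac{1}{2[L:\Q]} > 0$ whose Frobenius equals $(1, c)$, where $c$ is the nontrivial element of $\Gal(\Q(i)/\Q)$.

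For any such prime $p$, triviality of the Frobenius on $L$ implies that $p$ splits completely in $L$, and hence splits completely in the subfield $E$; non-triviality of the Frobenius on $\Q(i)$ means $p$ is inert in $\Q(i)$, which is equivalent to $p \equiv 3 \pmod{4}$. Discarding the finitely many primes ramified in $M$ still leaves infinitely many primes with both properties, which is what is needed.

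The only nontrivial verification in this outline is the linear disjointness $L \cap \Q(i) = \Q$, and I expect it to be the main conceptual point; happily it is immediate from $L$ being totally real. In particular, no information about $L$ beyond this is required, and one need not compute $[L:\Q]$ or the precise Galois structure of $L/\Q$ to finish the argument.
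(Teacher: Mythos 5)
Your proposal is correct, and it is a genuinely different argument from the one in the paper. The paper proceeds very concretely: it first checks (via Sage) that $E/\Q$ is Galois with group $\Z/2\Z\ti\Z/2\Z$, then computes the roots of $\chi$ explicitly to identify $E\cong\Q(\sqrt{2},\sqrt{5})$, invokes Kronecker--Weber to embed $E$ in $\Q(\ze_{40})$, identifies $\Gal(\Q(\ze_{40})/E)=\{1,9,31,39\}\sbs(\Z/40\Z)^\ti$, and then applies Dirichlet's theorem to the arithmetic progression $40x+31$. Your argument instead forms the Galois closure $L$ of $E$, observes that $L\sbs\R$ since $\chi$ is totally real (which immediately gives $L\cap\Q(i)=\Q$), and applies Chebotarev to $M=L(i)$ with Frobenius target $(1,c)\in\Gal(L/\Q)\ti\Gal(\Q(i)/\Q)$. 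This buys generality and avoids computation: you never need $E/\Q$ to be Galois, never need to know $[L:\Q]$ or the Galois group, and the same argument proves the analogous statement for any monic irreducible $\chi\in\Z[x]$ with only real roots. What you lose relative to the paper is the explicit arithmetic progression of primes $p\equiv31\pmod{40}$, which is occasionally useful for downstream computations but is not needed for the theorem. Both approaches are valid; yours is the cleaner one conceptually.
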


\begin{proof}

First we note that $E=\Q[x]/(\chi)$ is a Galois extension of $\Q$ and has Galois group $\Z/2\Z\ti\Z/2\Z$. We checked this using Sage. 

By the Kronecker--Weber theorem \cite[Ch.\ V, Thm.\ 1.10]{neukirch}, every abelian extension of $\Q$ is contained in a cyclotomic field $\Q(\ze_m)$, where $\ze_m$ a primitive $m$-th root of unity. In fact, $E\sbs\Q(\ze_{40})$. To see this, first compute that the roots of $\chi$ are 
\[\begin{array}{c}
\fr{1}{2}\big(9 - 6\sqrt{2} - 3\sqrt{5} + 2\sqrt{10}\big)\\[2mm]
\fr{1}{2}\big(9 - 6\sqrt{2} + 3\sqrt{5} - 2\sqrt{10}\big)\\[2mm]
\fr{1}{2}\big(9 + 6\sqrt{2} - 3\sqrt{5} - 2\sqrt{10}\big)\\[2mm]
\fr{1}{2}\big(9 + 6\sqrt{2} + 3\sqrt{5} + 2\sqrt{10}\big)\end{array}\]
This implies that $E\cong\Q(\sqrt{2},\sqrt{5})$, and since
\[\sqrt{2}=\ze_8+\ze_8^{-1}\>\>\>\text{ and }\>\>\>\sqrt{5}=(\ze_5+\ze_5^{-1})-(\ze_5^2+\ze_5^{-2})\]
we conclude that $E\sbs\Q(\ze_5,\ze_8)\sbs\Q(\ze_{40})$.

The primes $p\nmid 40$ that split completely in $E$ are characterized as follows. The Galois group of $\Q(\ze_{40})/\Q$ is $(\Z/40\Z)^\ti$ and contains $\Gal(\Q(\ze_{40})/E)$ as a subgroup. If $p\nmid 40$ then $p$ splits completely if and only if the image of $p$ in $(\Z/40\Z)^\ti\cong\Gal(\Q(\ze_{40})/\Q)$ lies in $\Gal(\Q(\ze_{40})/E)$; \cite[Ch.\ I, \S10 and Ch.\ VII, Lem.\ 13.5]{neukirch}. One computes
\[\Gal(\Q(\ze_{40})/E)=\{1,9,31,39\}\sbs(\Z/40\Z)^\ti.\]
Therefore, $\chi$ splits completely in $\Z/p\Z$ whenever $p$ is a prime in one of the following arithmetic progressions. 
\[40x+1, \>\>\>40x+9,\>\>\> 40x+31,\>\>\> 40x+39\]
By Dirichlet's theorem, each of these arithmetic progressions contain infinitely many primes. In particular, since $40x+31\equiv3\pmod{4}$, there are infinitely many primes congruent to $3\pmod{4}$ so that $\chi$ splits completely in $\Z/p\Z$. 
%A prime $p$ splits completely in $\Q(\ze_m)$ if and only if $p\equiv 1\pmod{m}$ \cite[Ch.\ VI, \S7]{neukirch}. Therefore, each prime $p\equiv 1\pmod{m}$ splits completely in $E$. 
\end{proof}

\section{Periodic maximal flats in homology}\label{sec:flat-homology}

In this section we complete our construction of \mrt s that will be used in the proofs of Theorems \ref{thm:SL}--\ref{thm:ROS}. For the algebraic tori $G_1$ constructed in \S\ref{sec:flats}, we construct $G_2$ and $K$ and identify primes $p$ with the property that $(G,G_1,G_2,K,p^k)$ is a \mrt\ for sufficiently large $k$. To do this, we apply the main results of \cite{ANP}, \cite{zschumme}, and \cite{tshishiku-geocycles} about flat cycles in homology.

% In this section we apply the main results of \cite{ANP}, \cite{zschumme}, and \cite{tshishiku-geocycles} about flat cycles in homology. The main goal is to specify groups $G_1,G_2<G$ that satisfy the conditions (\ref{eqn:compact})--(\ref{eqn:sign}) from \S\ref{sec:geocycles}. The group $G_1$ will be an algebraic tori as constructed in \S\ref{sec:flats}. 
%In some cases we require slightly different formulations that are better adapted to prove our main theorems. 

\subsection{Flats and the homology of $\SL_{n+1}(\Z)$ }\label{sec:flat-homology-SL}

%Set $G=\SL_{n+1}$. %Throughout this section we use the same notation as in \S\ref{sec:geocycles}: $X,Y(r), \Gamma(r), W(r,s)$, etc. 

Let $G = \SL_{n+1}$. As in \S\ref{sec:flats-SL}, we define $G_1$ as the centralizer of $A_1\in\SL_{n+1}(\Z)$, where $A_1$ has real eigenvalues and whose characteristic polynomial is irreducible in $\Q[x]$. By Lemma \ref{lem:SL-centralizer}, the group $G_1$ is an $\R$-split torus $G_1(\R)\cong(\R^\ti)^n$, and $G_1(\Z)<G_1(\R)$ is cocompact. 

There is a unique maximal compact $K<G(\R)$ such that $K_1:=K\cap G_1(\R)$ is a maximal compact subgroup of $G_1(\R)$. Specifically, let $v_0,\ldots,v_n\in\R^{n+1}$ be an eigenbasis for $A_1$, fix the inner product for $\R^{n+1}$ where $v_0,\ldots,v_n$ is an orthonormal basis, and let $K\cong\SO(n+1)$ be the group of isometries of $\R^{n+1}$ with respect to this inner product. Then $K$ is a maximal compact subgroup of $G(\R)$ and it contains the maximal compact subgroup $K_1\cong(\Z/2\Z)^n$ of $G_1(\R)$.

We will choose $G_2$ as follows. Fix a vector $v\in\Q^{n+1}$, set $L=\R\{v\}$, and let $P$ be the orthogonal complement of $L$ in $\R^{n+1}$ (with respect to our fixed inner product). Let $A_2\in\GL_{n+1}(\Q)$ be the matrix that preserves the decomposition $\R^{n+1}=P\oplus L$, and acts on $P$ and $L$ by $-1$ and $+1$, respectively. Define $G_2$ to be the centralizer in $\SL_{n+1}$ of $A_2$. By construction, $K_2:=K\cap G_2(\R)$ is the maximal compact subgroup of $G_2(\R)$, and $K_2$ is isomorphic to $\Isom(P)\times\Isom(L)$.

%to be the centralizer in $\SL_{n+1}$ of $A_2\in\GL_{n+1}(\Q)$ with eigenvalues $(-1,\ldots,-1,1)$ and that is diagonalizable over $\Q$. 

The group $G_2(\Q)$ is conjugate in $\SL_{n+1}(\Q)$ to
\[\left\{\left(\begin{array}{cc}A&0\\0&\det(A)^{-1}\end{array}\right): A\in\GL_n(\Q)\right\}\sbs\SL_{n+1}(\Q).\]
We will impose further condition on the matrix $A_2$ below to ensure that $X_1$ and $X_2$ meet transversely in a single point. 

\paragraph{Intersection of $X_1$ and $X_2$.} 
Recall that $X_i:=G_i(\R)/K_i$, where $K_i=K\cap G_i(\R)$. 

We explain how to choose $A_2$ so that the $X_1$ and $X_2$ meet transversely in a single point. By construction, $X_1$ and $X_2$ intersect in the basepoint $eK\in G/K= X$, so the point is to show that the intersection is no larger. Avramidi--Nguyen-Phan \cite{ANP} give a projective geometry criterion for $X_1$ and $X_2$ to intersect transversely in a single point. Instead we use Lemma \ref{lem:generic-SL} below, which is simpler for our purposes.

Fix $v\in\Q^{n+1}$ such that if we write $v=a_0v_0+\cdots+a_nv_n$, then $a_i\neq0$ for each $i$. Set $L=\R\{v\}$ and $P$ be the orthogonal complement of $L$ in $\R^{n+1}$ (with respect to our chosen inner product). Let $A_2\in\GL_{n+1}(\Q)$ be the matrix that acts trivially on $L$ and by $-1$ on $P$. Observe that $K\cap G_2(\R)=\Isom(P)\ti\Isom(L)$ is a maximal compact subgroup of $G_2(\R)$. %The following lemma shows that $X_1$ and $X_2$ intersect transversely in a point (the basepoint $eK\in G(\R)/K\equiv X$). 

\begin{lem}\label{lem:generic-SL}
For $G_1$ and $G_2$ as above, $G_1(\R)\cap G_2(\R)=\{\pm I\}$. Consequently, $X_1\cap X_2=\{eK\}$. 
\end{lem}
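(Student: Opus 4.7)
My plan is to prove both assertions by a direct linear-algebra computation exploiting the hypothesis that every coefficient $a_i$ of $v$ in the eigenbasis is nonzero. First I would record explicit descriptions of both centralizers: since $A_1$ has $n+1$ distinct real eigenvalues with eigenbasis $v_0,\ldots,v_n$, the group $G_1(\R)$ consists exactly of matrices diagonal in this basis of determinant one; since $A_2$ is semisimple with eigenspaces $L$ and $P$, the group $G_2(\R)$ consists of determinant-one matrices preserving both $L$ and $P$.

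Now suppose $g\in G_1(\R)\cap G_2(\R)$ and write $g=\diag(c_0,\ldots,c_n)$ in the basis $v_0,\ldots,v_n$. Preservation of $L=\R v$ forces $g(v)=\mu v$ for some $\mu\in\R$, and expanding $v=\sum_i a_iv_i$ yields $c_ia_i=\mu a_i$ for every $i$. Because every $a_i$ is nonzero, $c_i=\mu$ for all $i$, so $g=\mu I$, and the determinant condition $\mu^{n+1}=1$ over $\R$ gives $\mu\in\{\pm 1\}$ (only $\mu=1$ when $n+1$ is odd). This proves $G_1(\R)\cap G_2(\R)\subseteq\{\pm I\}$, and the reverse containment is clear.

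For the consequence $X_1\cap X_2=\{eK\}$, I would pass to the matrix model of $X$ as the space of positive-definite symmetric unimodular matrices via $gK\leftrightarrow gg^t$, with transpose taken in the chosen inner product that makes $v_0,\ldots,v_n$ orthonormal. Under this identification $X_1$ becomes the set of positive diagonal matrices (in the $v_i$ basis) of determinant one, and $X_2$ becomes the set of positive-definite symmetric matrices of determinant one preserving both $L$ and $P$; the latter uses that $L\perp P$, so $G_2(\R)$ is closed under transpose and hence so is the orbit of $I$ under $g\mapsto gIg^t$. Then $X_1\cap X_2$ consists of positive diagonal matrices in the $v_i$ basis that preserve $L$; the same argument using $a_i\neq 0$ forces such a matrix to be scalar, and positivity together with determinant one forces it to equal $I$, i.e.\ $eK$.

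There is no real obstacle, only one subtlety to flag: the equality $X_1\cap X_2=\{eK\}$ does \emph{not} follow formally from $G_1(\R)\cap G_2(\R)=\{\pm I\}$, since a point of $X_1\cap X_2$ only yields $g\in G_1(\R)K\cap G_2(\R)K$, rather than $g\in G_1(\R)\cap G_2(\R)$. This is precisely why one needs the symmetric-matrix model (or, equivalently, a Cartan involution argument) for the second assertion rather than a naive group-theoretic intersection.
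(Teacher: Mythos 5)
Your proof of the first assertion ($G_1(\R)\cap G_2(\R)=\{\pm I\}$) is essentially identical to the paper's: both write a common element $g$ as diagonal in the $A_1$-eigenbasis, use commutation with $A_2$ to force $g(v)=\mu v$, and then use $a_i\neq 0$ for all $i$ to conclude $g$ is scalar. The second assertion is where you genuinely diverge. The paper argues abstractly: $X_1\cap X_2$ is a convex (totally geodesic) subset containing $eK$, so if it contains more than one point it has positive dimension, and a nonzero tangent vector at $eK$ lies in $\mf p_1\cap\mf p_2\subset\Lie(G_1(\R))\cap\Lie(G_2(\R))$, contradicting the finiteness of $G_1(\R)\cap G_2(\R)$. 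You instead pass to the model of $X$ as positive-definite symmetric unimodular matrices and rerun the eigenbasis argument there: you identify $X_1$ with positive diagonal matrices, identify $X_2$ with positive-definite symmetric matrices preserving $L$ and $P$ (using $L\perp P$ to get transpose-stability of $G_2(\R)$), and then use positivity plus $a_i\neq 0$ to force the intersection to be $\{I\}$. Both routes are valid; yours is more concrete and self-contained, while the paper's is shorter because it cites a general fact about totally geodesic submanifolds. Your parenthetical remark that the second assertion ``does not follow formally'' from the first is accurate and is exactly the reason the paper inserts the connectedness-plus-dimension step rather than a naive group-theoretic deduction.
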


\begin{proof}%[Proof of Lemma \ref{lem:generic-SL}]
Fix $g\in G_1(\R)\cap G_2(\R)$. Since $g$ commutes with $A_1$ and $A_2$, there exist nonzero constants $c_0,\ldots,c_n,c$ so that $g(v_i)=c_iv_i$ and $g(v)=cv$. 
%\[g(v_i)=c_iv_i\>\>\>\text{ and }\>\>\>g(v)=cv.\]
On the one hand, 
\[g(v)=cv=c(a_0v_0+\cdots+a_nv_n)\]
and on the other hand, 
\[g(v)=g(a_1v_1+\cdots+a_nv_n)=a_0c_0v_0+\cdots+a_nc_nv_n.\]
Combining these we conclude that $c_i=c$ for each $i$. This shows that $g=cI$ is a scalar matrix, and $c\in\{\pm1\}$ because $g\in\SL_{n+1}(\R)$. 

To prove the last statement of the lemma, first observe that $X_1\cap X_2$ is connected (because $X_1$ and $X_2$ are totally geodesic in $X$), so if $X_1\cap X_2$ has more than one point, then $\dim\big(X_1\cap X_2\big)>0$. This implies that $\dim\big(G_1(\R)\cap G_2(\R)\big)>0$. 
\end{proof}

\paragraph{Intersection of $Y_1(r)$ and $Y_2(r)$.} Recall that $Y_i(r)=\Ga_i(r)\bs G_i(\R)/K_i$. Fix a prime $p$. The main theorem of \cite{ANP} proves the following. 

\begin{thm}[Avramidi--Nguyen-Phan]\label{thm:ANP}
Let $G,G_1,G_2,K$ be as above and fix a prime $p$. There exists $\ca K$ such that if $k\ge\ca K$, then $Y_1(p^k), Y_2(p^k)$ are orientable embedded submanifolds of $Y(p^k)$ that intersect transversely and such that every intersection has the same sign. 
\end{thm}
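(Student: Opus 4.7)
My plan is to verify the four required conclusions—embedding, orientability, transversality, and consistent sign of intersections—in turn, using standard arithmetic-group input for the first three and a Millson--Raghunathan-type argument for the fourth. For the embedding $Y_i(p^k)\hookrightarrow Y(p^k)$ with $k$ large, I would apply Raghunathan's theorem \cite[Thm.~E]{schwermer}, which reduces to showing that any $\gamma\in\Ga(p^k)$ with $\gamma G_i(\R)\gamma^{-1}=G_i(\R)$ lies in $\Ga_i(p^k)$ for $k$ sufficiently deep; this follows from discreteness of the $\Q$-commensurator of $G_i$ modulo $G_i(\Q)$ together with the congruence property at $p$. Orientability of $Y_i(p^k)$ for $k\gg 0$ follows from \cite[Thm.~F]{schwermer} and \cite[Prop.~2.2]{rohlfs-schwermer}: the obstruction lies in the image of $\Ga_i(p^k)$ in the finite component group $\pi_0(K_i)$, and this image is trivial for large $k$; as noted in \cite[\S5]{ANP}, for $G=\SL_{n+1}$ every prime admits such a $k$.

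Transversality is then immediate from Lemma \ref{lem:generic-SL} applied to conjugate tori $\gamma G_1 \gamma^{-1}$: each intersection point of $Y_1(p^k)\cap Y_2(p^k)$ lifts to a single point of $\gamma X_1\cap X_2\subset X$ for some $\gamma\in\Ga(p^k)$ (the hypotheses of the lemma pass to the conjugated torus), and the dimension count $\dim X_1+\dim X_2=\dim X$ forces the intersection to be transverse. This leaves the sign condition, which is the hard part. The strategy is to observe that the sign at an intersection $y\in Y_1(p^k)\cap Y_2(p^k)$ corresponding to a double coset $\Ga_1(p^k)\gamma\Ga_2(p^k)$ depends only on how $\gamma$ acts on the relative orientations of $T_{eK}X_1$ and $T_{eK}X_2$ inside $T_{eK}X$, and hence only on the image of $\gamma$ in a finite quotient of $\Ga$. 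A natural tool for controlling this sign is the isometric involution $\tau(g)=A_2 g A_2^{-1}$ of $X$, which is well-defined because $A_2$ is orthogonal with $A_2^2=I$ (and therefore symmetric), so $\tau$ commutes with the Cartan involution $\theta(g)=(g^t)^{-1}$. The fixed locus of $\tau$ contains $X_2$ and $\tau$ acts by $-\id$ on the normal bundle. For primes $p$ coprime to the denominators of $A_2$ and $k$ sufficiently large, $\tau$ preserves $\Ga(p^k)$ and descends to an isometry $\bar\tau$ of $Y(p^k)$ fixing $Y_2(p^k)$ pointwise, allowing the sign analysis to be carried out uniformly over all intersection points.

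The main obstacle is carrying out this sign analysis explicitly: one must check that, after passing to a deep enough congruence subgroup, every double coset $\Ga_1(p^k)\gamma\Ga_2(p^k)$ with nonempty intersection lies in a single ``sign class''. In \cite{ANP} this is accomplished by translating the intersection data into projective-geometric configurations in $\SL_{n+1}(\Z/p^k\Z)$ whose signed contributions can be computed combinatorially and shown to be uniform. The threshold $\ca K$ emerges from the depth needed for $\Ga(p^k)$ to lie in the kernel of the resulting sign map, and finitely many ``bad'' primes may need to be excluded so that the image of $A_2$ modulo $p$ exhibits the correct behavior relative to the congruence images of $\Ga_1(p^k)$ and $\Ga_2(p^k)$.
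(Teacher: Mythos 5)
The paper does not prove Theorem \ref{thm:ANP}: it is stated as the main theorem of Avramidi--Nguyen-Phan \cite{ANP} and used as a black box, with the authors' own contribution beginning only with the subsequent counting arguments. So there is no in-paper proof to compare against, only the framework laid out in \S\ref{sec:geocycles} and the sketch of the analogous orthogonal result, Theorem \ref{thm:tshishiku}. Your outline matches that framework in broad strokes, and you correctly identify the Raghunathan, Rohlfs--Schwermer, and Millson--Raghunathan inputs.

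However, your transversality argument has a genuine gap. You claim that ``the hypotheses of Lemma \ref{lem:generic-SL} pass to the conjugated torus $\gamma G_1\gamma^{-1}$,'' and conclude that $\gamma X_1\cap X_2$ is a single transverse point for each relevant $\gamma\in\Ga(p^k)$. But the hypothesis of Lemma \ref{lem:generic-SL} is that the vector $v$ defining $A_2$ has all nonzero coefficients $a_i$ with respect to the eigenbasis $v_0,\ldots,v_n$ of the fixed matrix $A_1$. For a conjugate $\gamma A_1\gamma^{-1}$ the relevant eigenbasis becomes $\gamma v_0,\ldots,\gamma v_n$, and there is no reason why the fixed $v$ remains generic relative to this rotated basis. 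So Lemma \ref{lem:generic-SL} does not deliver the global transversality of $Y_1(p^k)$ with $Y_2(p^k)$; this must be argued separately (the paper's sketch of Theorem \ref{thm:tshishiku} attributes it to ``general theory,'' and in \cite{ANP} it is handled by a separate projective-geometric condition on pairs of tori). Finally, your sign discussion is a reasonable sketch of the Millson--Raghunathan strategy and correctly flags the involution $\tau(g)=A_2gA_2^{-1}$, but it explicitly defers the decisive step---showing the sign is uniform over all double cosets $\Ga_1(p^k)\gamma\Ga_2(p^k)$ contributing intersection points---to \cite{ANP}, so it functions as a citation rather than a proof. For contrast, the paper's own sketch of the analogous Theorem \ref{thm:tshishiku} makes this step explicit via the decomposition $\gamma=g_2g_1$ with $g_i\in G_i(\Q)$ and a spinor-norm criterion.
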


Combined with the general observations in \S\ref{sec:geocycles}, we conclude the following.
\begin{cor} \label{cor:mrt-SL}
Let $G,G_1,G_2,K$ be as above. For any prime $p$ there exists $\ca K$ such that if $k\ge\ca K$, then $(G,G_1,G_2,K,p^k)$ is a \mrt. 
\end{cor}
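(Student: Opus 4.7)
The plan is to verify conditions (\ref{eqn:compact})--(\ref{eqn:sign}) of a \mrt\ in order, citing the results already established in \S\ref{sec:flats-SL} and \S\ref{sec:flat-homology-SL}, and observing that most of the actual content is absorbed into Lemma \ref{lem:SL-centralizer}, Lemma \ref{lem:generic-SL}, and Theorem \ref{thm:ANP}.

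First I would dispatch the conditions that hold independently of $k$. Condition (\ref{eqn:compact}) is exactly the second conclusion of Lemma \ref{lem:SL-centralizer}, namely that $G_1(\Z)$ is cocompact in the $\R$-split torus $G_1(\R)\cong(\R^\ti)^n$. Condition (\ref{eqn:descends}) holds by construction: $K\cong\SO(n+1)$ was chosen as the isometry group of the inner product for which the eigenbasis $v_0,\ldots,v_n$ of $A_1$ is orthonormal, so $K_1=K\cap G_1(\R)\cong(\Z/2)^n$ is automatically maximal compact in $G_1(\R)$; and $K_2=K\cap G_2(\R)\cong\Isom(P)\ti\Isom(L)\cong O(n)\ti O(1)$ is maximal compact in $G_2(\R)\cong\GL_n(\R)$.

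Next I would verify condition (\ref{eqn:dimension}). For the dimension count, $\dim X_1=n$ (the flat of dimension equal to the real rank), and $\dim X_2=\dim(\GL_n(\R)/O(n))=\tfrac{n(n+1)}{2}$, so $\dim X_1+\dim X_2=\tfrac{n(n+3)}{2}=\dim X$. The single-point intersection $X_1\cap X_2=\{eK\}$ is precisely Lemma \ref{lem:generic-SL}, which applies because the vector $v$ defining $G_2$ was chosen to have nonzero coordinates in the eigenbasis of $A_1$.

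Finally, for conditions (\ref{eqn:embedded}) and (\ref{eqn:sign}), I would simply invoke Theorem \ref{thm:ANP} of Avramidi--Nguyen-Phan: given the prime $p$, it produces $\ca K$ such that for every $k\ge\ca K$ the submanifolds $Y_1(p^k)$ and $Y_2(p^k)$ are orientable, embedded in $Y(p^k)$, intersect transversely, and every intersection has the same sign. Setting $r=p^k$ for $k\ge\ca K$, all five conditions are satisfied, so $(G,G_1,G_2,K,p^k)$ is a \mrt. Since every nontrivial input here has already been established, the corollary is essentially a bookkeeping step; the only mild subtlety is ensuring the $K$ chosen in our construction agrees with the one implicit in Theorem \ref{thm:ANP}, but this follows because the statement of \cite{ANP} applies to any maximal compact subgroup of $G(\R)$ compatible with the $G_i(\R)$, which is exactly the compatibility arranged in condition (\ref{eqn:descends}).
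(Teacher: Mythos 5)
Your proof is correct and matches the paper's intended argument exactly: the paper states the corollary without proof (just ``Combined with the general observations in \S\ref{sec:geocycles}, we conclude the following''), and your verification of conditions (\ref{eqn:compact})--(\ref{eqn:sign}) via Lemma \ref{lem:SL-centralizer}, the construction of $K$, the dimension count, Lemma \ref{lem:generic-SL}, and Theorem \ref{thm:ANP} is precisely the intended chain of references. The closing remark about reconciling the choice of $K$ is unnecessary since Theorem \ref{thm:ANP} is stated with the same $K$ already fixed, but it does no harm.
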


\subsection{Flats and the homology of $\SL_2(\ca O_F)$} \label{sec:flat-homology-ROS}

Fix a totally real number field $F/\Q$ of degree $n\ge2$. 
%\paragraph{Algebraic tori in $R_{F/\Q}(\SL_2)$ and maximal flats in $\bb H^2\ti\cd\ti\bb H^2$.}
As in \S\ref{sec:flats-ROS}, consider $\ca H = \SL_2(\ca O_F)$ and $G = R_{F/\Q}(\ca H)$, and let $A_1\in\SL_2(\ca O_F)$ be the companion matrix of a polynomial $\xi=x^2+ax+1$ in $\ca O_F[x]$ that is irreducible in $F[x]$ and has real roots. Let $\ca H_1<\ca H$ be its centralizer and $G_1=R_{F/\Q}(\ca H_1)$. The group $G_1(\R)$ is isomorphic to $(\R^\ti)^n$ and is embedded in $G(\R)\cong(\SL_2(\R))^n$ diagonally by Lemma \ref{lem:ROS-centralizer}. The (unique) maximal compact subgroup $K_1<G_1(\R)$ maps to the subgroup $\{(\pm I,\ldots,\pm I)\}$ in $(\SL_2(\R))^n$. Since $\{\pm I\}$ is central in $\SL_2(\R)$, every maximal compact subgroup of $G(\R)$ contains $K_1$. As such, we fix any maximal compact subgroup $K<G(\R)$, and consider the corresponding maximal flat $X_1:=G_1(\R)/K_1\hookrightarrow G(\R)/K=:X$. 

%By Lemma \ref{lem:ROS-centralizer}, $G_1(\R)$ preserves a (unique) maximal flat\[\bb E^n\cong X_1 \sbs X= G(\R)/K\cong\hy^2\ti\cd\ti\hy^2.\]

%\paragraph{Maximal flats.} We return to the general setting. The matrix $A\in\SL_2(\ca O_F)$ acts on the  symmetric space $G(\R)/K\cong\hy^2\ti\cd\ti\hy^2$ preserving a unique flat $X_1$, which is $n$-dimensional. %For $g\in\SL_2(F)$, the flat $g(X_1)$ is preserved by $g\tau g^{-1}$. 

\begin{lem}\label{lem:approximate-flat}
Let $A_1\in\SL_2(\ca O_F)$ as above. There exists $g\in\SL_2(F)$ such that the flats $X_1$ and $g(X_1)$ intersect transversely in a single point.  
\end{lem}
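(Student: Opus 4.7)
\emph{Plan.} My plan is to identify $X_1$ as a product of geodesic axes in the hyperbolic plane factors of $X$, reduce the lemma to a boundary-linking condition in each factor, and then produce a suitable $g \in \SL_2(F)$ by weak approximation. Under the identification $X = G(\R)/K \cong (\bb H^2)^n$ coming from the $n$ real embeddings $\sigma_1,\ldots,\sigma_n$ of $F$, the flat $X_1$ is the product $\prod_{i=1}^n \gamma_i$, where $\gamma_i \sbs \bb H^2$ is the geodesic axis of the hyperbolic element $A_1^{\sigma_i} \in \SL_2(\R)$. Any $g \in \SL_2(F)$ acts on $X$ coordinatewise via its embeddings, so $g(X_1) = \prod_i g^{\sigma_i}(\gamma_i)$ and
\[
X_1 \cap g(X_1) \;=\; \prod_{i=1}^n \bigl(\gamma_i \cap g^{\sigma_i}(\gamma_i)\bigr).
\]

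Since two distinct geodesics in $\bb H^2$ meet in at most one point and do so transversely when they meet, the above intersection is a single transverse point iff, for each $i$, the geodesics $\gamma_i$ and $g^{\sigma_i}(\gamma_i)$ are distinct and meet. Letting $\{\xi_i^+,\xi_i^-\} \sbs \bb P^1(\R)$ denote the endpoints of $\gamma_i$ on $\pa \bb H^2$ (the eigenlines of $A_1^{\sigma_i}$), this is equivalent to the pair $\{g^{\sigma_i}(\xi_i^+),g^{\sigma_i}(\xi_i^-)\}$ linking $\{\xi_i^+,\xi_i^-\}$ on the circle $\bb P^1(\R)$. Linking is an open condition on $g^{\sigma_i} \in \SL_2(\R)$, and because $\SL_2(\R)$ acts transitively on unordered pairs of distinct points in $\bb P^1(\R)$ it is plainly satisfiable. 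Hence the set $U \sbs G(\R)$ of $g$ whose coordinates $g^{\sigma_i}$ all satisfy the linking condition is a nonempty open subset of $G(\R) \cong \SL_2(\R)^n$.

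To finish I would invoke weak approximation: the map $F \hra \R^n$ given by $(\sigma_1,\ldots,\sigma_n)$ has dense image, and since $\SL_2$ is generated as an algebraic group by its unipotent root subgroups $\bb G_a$, this density propagates to $\SL_2(F) \hra \SL_2(\R)^n = G(\R)$. Any $g \in \SL_2(F) \cap U$ then satisfies the conclusion of the lemma. The only delicate point I anticipate is this density statement, which is standard; as a concrete alternative, one can restrict to the one-parameter family $g(t) = \left(\begin{smallmatrix}1 & t \\ 0 & 1\end{smallmatrix}\right)$ with $t \in F$, translate the linking condition into an explicit open inequality on $(t^{\sigma_1},\ldots,t^{\sigma_n}) \in \R^n$, and apply weak approximation for $F$ directly to produce a $t$ satisfying it.
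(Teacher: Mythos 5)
Your proposal is correct and takes essentially the same route as the paper: both decompose $X$ as $(\bb H^2)^n$, reduce to a linking condition on endpoint pairs in $\pa\bb H^2 \cong \bb P^1(\R)$, and then combine openness of that condition with density of $\SL_2(F)$ in $\SL_2(\R)^n$. The only cosmetic difference is that the paper first picks an arbitrary flat $\ca F$ transverse to $X_1$ and approximates it by $g(X_1)$, whereas you work directly with the nonempty open set of good $g \in G(\R)$ and intersect with the dense subgroup $\SL_2(F)$; these are logically interchangeable.
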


\begin{proof}
First we recall a few facts which make the lemma easy. 

{\it Parameterizing the space of flats.} Any flat $\ca F\sbs\hy^2\ti\cd\ti\hy^2$ is a product of geodesics $\ca F\cong\ga_1\ti\cd\ti\ga_n$. Since a geodesic in $\hy$ is determined uniquely by its endpoints in $\pa\hy^2\cong S^1$, a flat is uniquely determined by a tuple $(\theta_1,\ldots,\theta_{2n})\in(S^1)^{\ti2n}$. Since $\SL_2(\R)$ acts transitively on geodesics in $\hy^2$, it is obvious that $G(\R)$ acts transitively on flats. Furthermore, since $G(\Q)\cong\SL_2(F)$ is dense in $G(\R)$, the $G(\Q)$-orbit of a flat is dense in the space of flats (where the space of flats is topologized as a subset of $(S^1)^{\ti2n}$). 

{\it Intersection of two flats.} Two flats $\ca F$ and $\ca F'$ with corresponding tuples $(\theta_1,\ldots,\theta_{2n})$ and $(\theta_1',\ldots,\theta_{2n}')$ intersect transversely in a single point if and only if the pairs $(\theta_{2i-1},\theta_{2i})$ and $(\theta_{2i-1}',\theta_{2i}')$ are linked in $\pa\hy^2$ for each $i=1,\ldots,n$.

Now we prove the lemma. Choose a flat $\ca F$ that intersects $X_1$ transversely in a point. By the preceding paragraphs, we can choose $g\in\SL_2(F)$ so that $g(X_1)$ is close enough to $\ca F$ in the space of flats so that $g(X_1)$ and $X_1$ also intersect transversely in a point. 
\end{proof}

%\paragraph{Choice of $G_1,G_2$.}

%Fix $A_1\in\SL_2(\ca O_F)$, $G_1$, and $X_1$ as above. 
Using Lemma \ref{lem:approximate-flat}, choose $g\in\SL_2(F)$ so that $X_1$ and $X_2:=g(X_1)$ intersect transversely in a single point. Set $A_2=gA_1g^{-1}$, define $\ca H_2\cong g\ca H_1g^{-1}$ to be the centralizer of $A_2$, and define $G_2=R_{F/\Q}(\ca H_2)$. 

%and assume that the characteristic polynomial $\chi_1$ of $A_1$ is irreducible over $F$ and has a real roots. Let $\ca H_1<\ca H$ be the centralizer of $A_1$, and set $G_1=R_{F/\Q}(H_1)$. Denote by $X_1\sbs G(\R)/K$ the flat preserved by $A_1$ (equivalently, $X_1$ is the sub-symmetric space corresponding to $G_1<G$). Using Lemma \ref{lem:approximate-flat}, choose $g\in\SL_2(F)$ so that $X_1$ and $X_2:=g(X_1)$ intersect transversely in a single point. Set $A_2=gA_1g^{-1}$, define $\ca H_2\cong g\ca H_1g^{-1}$ to be the centralizer of $A_2$, and define $G_2=R_{F/\Q}(\ca H_2)$. 

For an ideal $\mf r\sbs\ca O_F$, we consider the principal congruence subgroup
\[\Ga(\mf r):=\ker\big[\SL_2(\ca O_F)\ra\SL_2(\ca O_F/\mf r)\big].\]
Accordingly, we write $Y(\mf r)=\Ga(\mf r)\bs G(\R)/K$ and similarly define $\Ga_i(\mf r)$ and $Y_i(\mf r)$ for $i=1,2$. %similar to preceding sections. 
%With these choices, $G_1\cap G_2=\{\id\}$ and $X_1,X_2\sbs X$ meet transversely in a single point, cf.\ \cite[\S3]{tshishiku-geocycles}.  Furthermore, the main result of \cite{tshishiku-geocycles} proves the following analogue of Theorem \ref{thm:ANP} of Avramidi--Nguyen-Phan. 

\begin{thm}[Zschumme]\label{thm:zschumme}
Let $G,G_1,G_2,K$ be as above. For any prime ideal $\mf p\sbs\ca O_F$ there exists $\ca K$ such that if $k\ge\ca K$, then $Y_1(\mf p^k),Y_2(\mf p^k)$ are oriented submanifolds of $Y(\mf p^k)$ that intersect transversely, and each intersection of $Y_1(\mf p^k)$ and $Y_2(\mf p^k)$ has the same sign. 
\end{thm}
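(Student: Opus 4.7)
The plan is to verify that $(G, G_1, G_2, K, \mf p^k)$ forms a \mrt\ in the sense of \S\ref{sec:geocycles} for all $k$ sufficiently large, since the statement of the theorem is precisely conditions (\ref{eqn:embedded}) and (\ref{eqn:sign}) in that definition. Conditions (\ref{eqn:compact})--(\ref{eqn:dimension}) are already in place: Lemma \ref{lem:ROS-centralizer} gives (\ref{eqn:compact}) and the torus structure $G_1(\R) \cong (\R^\ti)^n$; the choice of $K$ was arranged so that $K_i := K \cap G_i(\R)$ is maximal compact in $G_i(\R)$, yielding (\ref{eqn:descends}); and the dimension count $\dim X_1 + \dim X_2 = 2n = \dim X$ together with the single transverse intersection $X_1 \cap X_2 = \{eK\}$ from Lemma \ref{lem:approximate-flat} gives (\ref{eqn:dimension}).

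For the embedding and orientability in (\ref{eqn:embedded}), I would invoke Raghunathan's theorem, cited in \S\ref{sec:geocycles} as \cite[Thm.\ E]{schwermer}: the immersions $Y_i(\mf p^k) \to Y(\mf p^k)$ become embeddings once $k$ is sufficiently large, for any prime ideal $\mf p$. Orientability follows from the companion result \cite[Thm.\ F]{schwermer} and \cite[Prop.\ 2.2]{rohlfs-schwermer}; concretely, the obstruction to orientability of $Y_i(\mf p^k)$ is the orientation character on $\Ga_i(\mf p^k)$ arising from the action of $K_i \cong (\Z/2\Z)^n$ on eigenlines, and this character is trivialized by restriction to a sufficiently deep congruence subgroup.

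The main obstacle is condition (\ref{eqn:sign}). Transversality at each intersection point follows by translation from the transverse intersection at $eK$, since the intersection points in $Y_1(\mf p^k) \cap Y_2(\mf p^k)$ are parameterized by double cosets in $\Ga_1(\mf p^k) \bs \Ga(\mf p^k) / \Ga_2(\mf p^k)$ whose representatives $\ga$ conjugate $G_1(\R)$ to be transverse to $G_2(\R)$ at $eK$. The hard part is arranging that all intersection signs agree, and here I would adapt the Millson--Raghunathan sign-cancellation strategy \cite{millson, millson-raghunathan}: the sign at each intersection point is determined by the orientation action of $\ga$ on the splitting $T_{eK}X = T_{eK}X_1 \op T_{eK}X_2$, and one exploits the $\Z/2$-symmetry from the Galois involution of $E/F$ (where $E = F[x]/(\xi)$ from \S\ref{sec:flats-ROS}) to pair intersection points carrying opposite signs and cancel them. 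Realizing this Galois symmetry by an element of $\Ga(\mf p^k)$ is where the depth hypothesis $k \ge \ca K$ enters; carrying out this analysis in the specific arithmetic setting of $R_{F/\Q}(\SL_2)$ is the main technical content of \cite{zschumme}.
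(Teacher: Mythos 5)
Your framework — reduce to verifying conditions (\ref{eqn:compact})--(\ref{eqn:sign}) of a \mrt\ — is correct, and your treatment of (\ref{eqn:compact})--(\ref{eqn:dimension}) and of the embedding half of (\ref{eqn:embedded}) matches the paper. But the discussion of (\ref{eqn:sign}) goes off the rails.

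First, the stated goal of the Millson--Raghunathan strategy is backwards: one does not ``pair intersection points carrying opposite signs and cancel them'' — if the signs cancelled, the class $[Y_1]$ could pair trivially with $[Y_2]$ and the theorem would be vacuous. The whole point is to pass to a deep enough congruence cover so that \emph{all} intersection signs agree, making the intersection number equal to $|Y_1 \cap Y_2| \neq 0$. Second, the ``Galois involution of $E/F$ pairing intersection points'' does not appear anywhere in the argument; you seem to be conflating the role of $E/F$ in constructing the anisotropic torus $G_1$ with the sign analysis, which is a separate matter. Third, and most importantly, you miss the one observation that makes the $R_{F/\Q}(\SL_2)$ case \emph{easier} than the general situation Zschumme handles: the maximal compact subgroups $K_1 \cong K_2 \cong \{\pm 1\}^n$ map into the center $\{(\pm I, \dotsc, \pm I)\}$ of $G(\R) = \SL_2(\R)^n$, so $K_i$ acts trivially on $X$, $X_1$, $X_2$. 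Consequently every element of $G_i(\R)$ (connected component or not) preserves the orientations on $X$ and $X_i$, so both orientability of $Y_i(\mf p^k)$ and the uniformity of signs at all intersection points follow \emph{automatically} from general arguments (\cite[\S2.2 and Prop.\ 6]{tshishiku-geocycles}) once $k$ is large enough that Raghunathan's theorem makes $Y_i(\mf p^k) \to Y(\mf p^k)$ an embedding. The depth hypothesis $k \geq \ca K$ enters only for the embedding, not for ``realizing a Galois symmetry.'' The paper's route therefore avoids the hard arithmetic sign analysis entirely in this case, which is precisely why it can strengthen Zschumme's ``some ideal $\mf r$'' to ``every prime $\mf p$, for $k$ large enough.''
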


Theorem \ref{thm:zschumme} has stricter hypotheses and a slightly stronger conclusion than what is proved by Zschumme \cite{zschumme}.  Zschumme considers more general lattices $R_{F/\Q}(\SL_D)$, where $D$ is a quaternion algebra, and only concludes that there is \emph{some} ideal $\mf r\sbs\ca O_F$ for which the conclusion above holds.

To reach the stronger conclusion in our case, note that a special feature of the $G=R_{F/\Q}(\SL_2)$ case is that $G_1(\R)$ automatically preserves the orientations on $X$ and $X_1$, and similarly for $G_2(\R)$. Therefore, once we choose $k$ large enough so that $Y_1(\mf p^k)$ and $Y_2(\mf p^k)$ are embedded, the conclusions about transversality and signs of the intersections are satisfied by general arguments. See \cite[\S2.2 and Prop.\ 6]{tshishiku-geocycles}.

% \begin{rmk}
% Zschumme \cite{zschumme} does not give the statement of Theorem \ref{thm:zschumme} above; Zschumme considers more general lattices $R_{F/\Q}(\SL_D)$, where $D$ is a quaternion algebra, and only concludes that there is \emph{some} ideal $\mf r\sbs\ca O_F$ for which the conclusion above holds. A special feature of the $G=R_{F/\Q}(\SL_2)$ case is that $G_1(\R)$ automatically preserves the orientations on $X$ and $X_1$, and similarly for $G_2(\R)$. Therefore, once we choose $k$ large enough so that $Y_1(\mf p^k)$ and $Y_2(\mf p^k)$ are embedded, the conclusion about the transversality/signs of the intersection is satisfied by general arguments. See \cite[\S2.2 and Prop.\ 6]{tshishiku-geocycles}. %(For $\ga\in\Ga$ such that $\ga X_2\cap X_1\neq\vn$, we can write $\ga=g_2g_1\in G_2(\C)G_1(\C)$ and then use the fact that $G_1\cap G_2=\{1\}$ (which can be arranged by choosing $G_1,G_2$ carefully) to conclude that $g_2g_1\in G_2(\Q)G_1(\Q)$. Now use that $G_i(\Q)\sbs G_i(\R)$ preserves orientations on $X$ and $X_i$.) 
% \end{rmk}

Combined with the general observations in \S\ref{sec:geocycles}, we conclude the following.
\begin{cor} \label{cor:mrt-ROS}
Let $G,G_1,G_2,K$ be as above. For any prime $p$ there exists $\ca K$ such that if $k\ge\ca K$, then $(G,G_1,G_2,K,p^k)$ is a \mrt. 
\end{cor}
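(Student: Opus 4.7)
The plan is a direct verification of the five axioms (\ref{eqn:compact})--(\ref{eqn:sign}) for the tuple $(G,G_1,G_2,K,p^k)$ assembled in \S\ref{sec:flat-homology-ROS}. The first three axioms are structural and independent of $k$, while the last two are congruence--cover conditions supplied by Theorem \ref{thm:zschumme}. So the proof amounts to checking that the constructions of \S\ref{sec:flats-ROS} and \S\ref{sec:flat-homology-ROS} fit together cleanly and then quoting Zschumme.

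For (\ref{eqn:compact}), Lemma \ref{lem:ROS-centralizer} already asserts that $G_1(\Z)$ is cocompact in $G_1(\R)$. For (\ref{eqn:descends}), the maximal compact subgroup $K_1<G_1(\R)\cong(\R^\times)^n$ maps into the center $\{\pm I\}^n$ of $G(\R)\cong(\SL_2(\R))^n$, hence is contained in every maximal compact subgroup of $G(\R)$; the same holds for $K_2$, since $G_2$ is a $G(\Q)$-conjugate of $G_1$, and conjugation by elements of $G(\Q)\subset G(\R)$ preserves the centrality of $\{\pm I\}^n$. Thus any maximal compact $K<G(\R)$ meets each $G_i(\R)$ in a maximal compact subgroup, as required. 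Axiom (\ref{eqn:dimension}) follows because each flat $X_i=G_i(\R)/K_i$ has dimension $n$ and $\dim X=2n$, while the condition $X_1\cap X_2=\{eK\}$ is secured by the choice of $g\in\SL_2(F)$ in Lemma \ref{lem:approximate-flat} used to define $G_2$.

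Axioms (\ref{eqn:embedded}) and (\ref{eqn:sign}) are exactly the output of Theorem \ref{thm:zschumme}: it supplies an integer $\ca K$ such that for every $k\ge\ca K$, the submanifolds $Y_1(p^k)$ and $Y_2(p^k)$ are oriented and embedded in $Y(p^k)$, meet transversely, and every intersection has the same sign. Taking this $\ca K$ in the statement of the corollary completes the proof.

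The only substantive step is the invocation of Theorem \ref{thm:zschumme} (whose proof in \cite{zschumme}, together with the orientation argument from \cite[\S2.2 and Prop.\ 6]{tshishiku-geocycles} described after the statement of Theorem \ref{thm:zschumme}, does all the heavy lifting); every other verification is a direct consequence of constructions already made. I therefore expect the proof in the paper to be just a short list of pointers to the ingredients above.
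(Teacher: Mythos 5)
Your verification of the five axioms matches exactly what the paper's terse "Combined with the general observations in \S\ref{sec:geocycles}, we conclude the following" is implicitly packaging: (\ref{eqn:compact}) from Lemma \ref{lem:ROS-centralizer}, (\ref{eqn:descends}) from the centrality of $K_1=K_2=\{\pm I\}^n$ in $G(\R)\cong(\SL_2(\R))^n$, (\ref{eqn:dimension}) from the dimension count together with the choice of $g$ in Lemma \ref{lem:approximate-flat}, and (\ref{eqn:embedded})--(\ref{eqn:sign}) from Theorem \ref{thm:zschumme}. This is the same approach the paper takes; you have simply unpacked the pointer.
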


\subsection{Flats and the homology of $\SO(Q_n;\Z)$}\label{sec:flat-homology-SO}

%\paragraph{Defining $G_1$.} 
Continue with the notation of \S\ref{sec:flats-SO}, so that $G = \SO(Q_n)$, and $G_1$ is the centralizer of the matrix $A\in\SO(Q_n;\Z)$ that was constructed in \S\ref{sec:flats-SO} equation \ref{eqn:Ablockdefn}. Then $G_1(\R)\cong(\R^\ti)^n$ and $G_1(\Z)<G_1(\R)$ is cocompact by Lemma \ref{lem:SO-centralizer}.

To define $G_2$, decompose $\R^{2n}=U_1\oplus\cdots\oplus U_n$ into 2-dimensional summands, where each $U_j$ is spanned by two by a $(\la,\frac{1}{\la})$-eigenspace pair for $A$. Fix a rational vector $v\in\R^{2n}$ such that $Q_n(v,v)<0$. Let $L$ be the span of $v$, and set $P=L^\perp$. Assume that $P\cap U_i$ is a positive line for each $i$. Define $G_2$ as the centralizer of $A_2\in\GL_{n+m}(\Q)$, where $A_2$ acts by $-1$ on $P$ and acts trivially on $L$. With this choice of $G_1,G_2$, there is a unique maximal compact subgroup $K\subset G(\R)$ such that $K_i=G_i(\R)\cap K$ is a maximal compact subgroup of $G_i(\R)$ for $i=1,2$ (maximal compact subgroups of $G(\R)$ correspond to maximal  positive-definite subspaces of $\R^{2n}$; here the unique choice for $K$ is the subspace spanned by the lines $P\cap U_i$). 

%The action of $G_1(\Q)$ on $\Q^d$ preserves a decomposition $\Q^d=U\oplus U^\perp$, where $\rest{Q}{U}$ has signature $(n,n)$ and $\rest{Q}{U^\perp}$ is either positive or negative definite.  

%Let $v\in\Q^d$ be a vector such that $v^tQv<0$. 

%We choose $G_2$ to be the centralizer of $A_2\in\SL_{n+1}(\Q)$ with eigenvalues $(-1,\ldots,-1,1)$ and that is diagonalizable over $\Q$. Then $G_2(\Q)$ is conjugate over $\Q$ to 
%\[\left\{\left(\begin{array}{cc}A&0\\0&\det(A)^{-1}\end{array}\right): A\in\GL_n(\Q)\right\}\sbs\SL_{n+1}(\Q).\]

With these choices, $G_1\cap G_2=\{\id\}$ and $X_1,X_2\sbs X$ meet transversely in a single point, as explained in \cite[\S3]{tshishiku-geocycles}.  Furthermore, from \cite{tshishiku-geocycles} we have the following analogue of Theorem \ref{thm:ANP} of Avramidi--Nguyen-Phan \cite{ANP}. %there exists $r$ so that $Y_1(r),Y_2(r)$ are embedded submanifolds $Y(r)$ that intersect transversely and each intersection has the same sign by \cite[Thm.\ 1]{tshishiku-geocycles}. State precise version.

\begin{thm}\label{thm:tshishiku}
Let $G,G_1,G_2,K$ be as above. Let $p$ be prime and assume $p\equiv 3\pmod{4}$. There exists $\ca K$ so that if $k\ge\ca K$, then $Y_1(p^k), Y_2(p^k)$ are orientable embedded submanifolds of $Y(p^k)$ that intersect transversely and so that every intersection has the same sign. 
\end{thm}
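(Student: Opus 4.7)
The plan is to verify that the explicit tuple $(G, G_1, G_2, K)$ constructed above satisfies the hypotheses of the main sign-agreement theorem of \cite{tshishiku-geocycles}, which is the $\SO(Q_n)$ analogue of the Avramidi--Nguyen-Phan Theorem \ref{thm:ANP}. Concretely, one must check for sufficiently large $k$ that (i) $Y_i(p^k) \to Y(p^k)$ is an embedding, (ii) $Y_i(p^k)$ is orientable, (iii) $Y_1(p^k)$ and $Y_2(p^k)$ meet transversely, and (iv) all intersections carry the same sign. The congruence condition $p \equiv 3 \pmod{4}$ enters only in (iv).

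For (i), I would invoke Raghunathan's embedding theorem in the form of \cite[Thm.\ E]{schwermer}: once $k$ is sufficiently large, $Y_i(p^k) \to Y(p^k)$ is an embedding for $i=1,2$. For (ii), I would apply \cite[Thm.\ F]{schwermer} together with \cite[Prop.\ 2.2]{rohlfs-schwermer}, which show that after excluding finitely many primes and taking $k$ large, $\Gamma_i(p^k)$ acts on $X_i$ by orientation-preserving isometries. Since the algebraic groups $G_1, G_2$ are fixed, only finitely many primes must be excluded, so any sufficiently generic $p \equiv 3 \pmod 4$ (for example any prime supplied by Proposition \ref{prop:splitting}) admits a suitable $k$. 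Part (iii) is immediate from the construction in \S\ref{sec:flat-homology-SO}, where we arranged $X_1 \cap X_2 = \{eK\}$ as a single transverse intersection; transversality then descends to each intersection point of $Y_1(p^k)$ and $Y_2(p^k)$ in $Y(p^k)$.

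The main obstacle is (iv), the sign-agreement condition, which is precisely the content lifted from \cite{tshishiku-geocycles}. Following the Millson--Raghunathan strategy as executed there, the sign of intersection at a point corresponding to $g \in G(\Z)$ is determined by whether a particular orientation-reversing involution at that point has a representative lying in $\Gamma(p^k)$. This reduces to a quadratic-residue question involving the eigenvalues of the matrix $A = B \oplus B^2 \oplus \cdots \oplus B^m$ from \eqref{eqn:Ablockdefn}. The hypothesis $p \equiv 3 \pmod 4$ guarantees that $-1$ is a nonsquare modulo $p$; combined with the complete-splitting property supplied by Proposition \ref{prop:splitting}, this yields the required congruence behavior for all intersections to share a common sign. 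The conclusion then follows by directly invoking the corresponding theorem of \cite{tshishiku-geocycles} for our specific $(G, G_1, G_2, K)$.
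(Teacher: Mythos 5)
Your outline of conditions (i)--(iii) matches the paper: Raghunathan's embedding theorem as in \cite[Thm.\ E]{schwermer} for embeddedness, \cite[Thm.\ F]{schwermer} and \cite[Prop.\ 2.2]{rohlfs-schwermer} for orientability, and the explicit construction of $X_1$, $X_2$ meeting transversely in a single point. However, your treatment of (iv), the sign-agreement step, has two genuine problems. First, you claim the conclusion ``follows by directly invoking the corresponding theorem of \cite{tshishiku-geocycles}.'' This is exactly what cannot be done: the paper explicitly notes that Theorem \ref{thm:tshishiku} has a \emph{stronger} conclusion than \cite[Thm.\ 1]{tshishiku-geocycles} (which only guarantees that some level works, not that every $p\equiv 3\pmod 4$ works), so a new argument is required. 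Second, you invoke ``the complete-splitting property supplied by Proposition \ref{prop:splitting}.'' That proposition is not used in the proof of Theorem \ref{thm:tshishiku} at all; it is used later in \S\ref{sec:SO-proof} to estimate $|W_1(r,s)|$. The theorem holds for every prime $p\equiv 3\pmod 4$, with no splitting hypothesis.

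The actual mechanism the paper uses for (iv) is the spinor norm $\theta$, and this is the ingredient missing from your sketch. Writing an intersection point as $[\gamma]$ with $\gamma=g_2g_1$, $g_i\in G_i(\Q)$ (via \cite[Prop.\ 6]{tshishiku-geocycles}), one must show each $g_i$ preserves orientation, which reduces to showing $\theta(g_i)\in\R^\ti/(\R^\ti)^2$ is trivial. The role of $p\equiv 3\pmod 4$ is to make the composite $\SO(Q_n;\Z)\to\SO(Q_n;\bb F_p)\to\bb F_p^\ti/(\bb F_p^\ti)^2$ surjective (because $-1$ is a nonsquare mod $p$), which forces $\theta$ to vanish on $\Gamma(p)$, hence $\theta(\gamma)=1$ and $\theta(g_1)=\theta(g_2)$. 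The separate fact that $\theta$ vanishes on $G_1(\Q)$ comes not from any splitting condition but from the $4\times 4$ block structure of $B$: since $B$ has irreducible characteristic polynomial, any $g\in G_1(\Q)$ that is $\pm1$ on each eigen-line must act by $-1$ on a multiple of four of them, so $\det(D_g)>0$. Your description of (iv) as ``a quadratic-residue question involving the eigenvalues of $A$'' does not capture this; the eigenvalues themselves are not what the quadratic-residue condition is about, and without the spinor-norm computation the argument has a gap.
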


%\begin{rmk}
%This stronger statement can be proved  than the analogous Theorem \ref{thm:ANP}. This is for technical reasons that are related to the spinor norm; see \cite[Prop.\ 14]{tshishiku-geocycles}. To get the ``infinitely many" part of Theorem \ref{thm:tshishiku} from what appears in \cite[\S3.2]{tshishiku-geocycles}, it helps to know that if $x\in\Q$ is a square in $\Q_p$ for all but finitely many $p$, then $x$ is a square in $\Q$.\footnote{\texttt{https://kconrad.math.uconn.edu/blurbs/gradnumthy/localglobal.pdf}}
%\end{rmk}

Theorem \ref{thm:tshishiku} is a special case of what is proved in \cite[Thm.\ 1]{tshishiku-geocycles} and has a stronger conclusion. We explain this below. %the argument below, which is related to an argument of Avramidi--Nguyen-Phan that simplifies the proof of their Theorem \ref{thm:ANP}. For more details, we refer the reader to (fill). 

\begin{proof}[Proof sketch of Theorem \ref{thm:tshishiku}]
Using general theory, we can find $\ca K_0$ so that if $k\ge\ca K_0$, then $Y_1(p^k)$ and $Y_2(p^k)$ are oriented, embedded in $Y(p^k)$, and intersect transversely. Fixing $k\ge\ca K_0$, there is an embedding $\iota:Y_1(p^k)\cap Y_2(p^k)\hra\Ga_2(p^k)\bs\Ga(p^k)/\Ga_1(p^k)$. Fix $z\in Y_1(p^k)\cap Y_2(p^k)$ and write $[\ga]=\iota(z)$. To show that $Y_1(p^k)$ and $Y_2(p^k)$ intersect positively\footnote{Here we have fixed orientations on $X_1,X_2$ so that their intersection is positive.} at $z$ it suffices to show that we can write $\ga=g_2g_1$ where $g_i\in G_i(\R)$ preserves the orientation on $X_i$ and on $X$. As explained in \cite[Prop.\ 6]{tshishiku-geocycles}, it is possible to write $\ga=g_2g_1$, where $g_i\in G_i(\Q)$. To show that the orientations are preserved, we will show that the spinor norm $\theta(g_i)\in\R^\ti/(\R^\ti)^2\cong\{\pm1\}$ is trivial; cf.\ \cite[\S3.2]{tshishiku-geocycles}. (For more on the spinor norm, see e.g.\ \cite[\S7.2]{hahn-omeara}.) On $\SO(Q_n;\Z)$ the spinor norm factors through a surjection $\theta:\SO(Q_n;\Z)\ra\Z^\ti/(\Z^\ti)^2\cong\{\pm1\}$; therefore, $\eta\in\SO(Q_n;\Z)$ has trivial spinor norm if and only if it is in the kernel of the map
\begin{equation}\label{eqn:p-spinor}\SO(Q_n;\Z)\ra\SO(Q_n;\bb F_p)\ra \bb F_p^\ti/(\bb F_p^\ti)^2,\end{equation}
and in particular the spinor norm is trivial on $\Ga(p)$. The preceding statement uses the assumption $p\equiv3\pmod4$, which implies that the map in (\ref{eqn:p-spinor}) is surjective, since then $-1$ is not a square in $\bb F_p^\ti$. 

%(Here it is important that we assume $p\equiv3\pmod{4}$, since these are the primes where $-1$ is not a square in $\bb F_p^\ti$.) In particular, the spinor norm is trivial on $\Ga(p)$.

 Since $\ga\in\Ga(p)$, we know $\theta(\ga)=1$, and so $\theta(g_1)=\theta(g_2)$. To finish the proof, we will show that the spinor norm is trivial on $G_1(\Q)$. Over $\R$, a matrix $g\in G_1(\Q)$ can be diagonalized into the form $\left(\begin{array}{cc}D_g&0\\0&D_g^{-1}\end{array}\right)$, and the spinor norm is given by $\theta(g)=\det(D_g)\mod(\R^\ti)^2$ (as can be easily checked). It is easy to see that $\det(D_g)>0$ for each $g\in G_1(\Q)$ (the main observation here is that for $g\in G_1(\Q)$ the number of eigen-lines on which $g$ acts by $-1$ must be a multiple of 4; this is because the matrix $B$ used to define $A_1$ is a $4\ti 4$ matrix with irreducible characteristic polynomial, so the only diagonal matrix in $G_1(\Q)$ that commutes with $B$ is $-I$). %if instead $g\in G_1(\R)\cong(\R^\ti)^n$, we can only conclude that the number of eigen-lines on which $g$ acts by $-1$ is  even; note also that if $B$ we a $4\ti 4$ matrix with irreducible characteristic polynomial). 
 Therefore $\theta$ vanishes on $G_1(\Q)$. 
\end{proof}

Combined with the general observations in \S\ref{sec:geocycles}, we conclude the following.
\begin{cor} \label{cor:mrt-SO}
Let $G,G_1,G_2,K$ be as above. For any prime $p\equiv 3 \pmod{4}$, there exists $\ca K$ such that if $k\ge\ca K$, then $(G,G_1,G_2,K,p^k)$ is a \mrt. 
\end{cor}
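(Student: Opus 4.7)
The plan is to verify each of the five conditions (A1)--(A5) in the definition of a Millson--Raghunathan tuple for the specific $G,G_1,G_2,K$ built in this section. Most of the work has already been done in Lemmas \ref{lem:SO-centralizer} and Theorem \ref{thm:tshishiku}, so the role of this corollary is essentially to assemble these ingredients and record the uniform choice of $\ca K$.

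First I would check the conditions independent of $p$ and $k$. Condition (A1), cocompactness of $G_1(\Z)$ in $G_1(\R)$, is exactly the conclusion of Lemma \ref{lem:SO-centralizer} applied to the block matrix $A$ in equation~(\ref{eqn:Ablockdefn}). Condition (A2), that $K_i = K \cap G_i(\R)$ is maximal compact in $G_i(\R)$ for $i=1,2$, was built into the definition of $K$ given just after the construction of $G_2$ (using that $K$ corresponds to a maximal positive-definite subspace of $\R^{2n}$ respecting both the eigenspace decomposition of $A$ and the line $L$). Condition (A3) is the content of the statement recalled from \cite[\S3]{tshishiku-geocycles}: $G_1 \cap G_2 = \{\id\}$ and $X_1 \cap X_2 = \{eK\}$ transversely; the dimension equality $\dim X_1 + \dim X_2 = \dim X$ follows since $X_1$ is an $n$-dimensional maximal flat and $X_2 = G_2(\R)/K_2$ is a symmetric space of rank complementary to $n$ in $X$.

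Next I would address the conditions (A4) and (A5), which depend on $p$ and require taking $k$ sufficiently large. By the general results of Raghunathan cited in \S\ref{sec:geocycles} (see \cite[Thm.\ E and Thm.\ F]{schwermer} and \cite[Prop.\ 2.2]{rohlfs-schwermer}), for any fixed prime $p$ there is a threshold $\ca K_0$ such that for $k \ge \ca K_0$ the maps $Y_i(p^k) \to Y(p^k)$ are embeddings and both $Y_i(p^k)$ are orientable; this gives (A4). For (A5), transversality is automatic from (A3) at the basepoint and transfers to $Y(p^k)$ since the covering map is a local diffeomorphism, and the sign condition is precisely the conclusion of Theorem \ref{thm:tshishiku}, which requires the hypothesis $p \equiv 3 \pmod 4$ and yields a (possibly larger) threshold $\ca K_1$. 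Setting $\ca K = \max(\ca K_0, \ca K_1)$ then ensures all five conditions hold simultaneously.

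The only nontrivial input is Theorem \ref{thm:tshishiku}, which has already been established (and whose proof sketch given above invokes the spinor norm calculation, the surjectivity of the mod-$p$ spinor norm when $p \equiv 3 \pmod 4$, and the triviality of the spinor norm on $G_1(\Q)$). So the corollary itself is not an obstacle; it is essentially a packaging statement, and I expect the proof to occupy just a few lines referencing Lemma \ref{lem:SO-centralizer}, Theorem \ref{thm:tshishiku}, and the general discussion in \S\ref{sec:geocycles}.
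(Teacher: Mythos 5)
Your proof is correct and follows the same route the paper takes: the paper gives no separate argument for this corollary, simply stating that it follows from Theorem~\ref{thm:tshishiku} together with the general discussion in \S\ref{sec:geocycles}, and your proposal fills in exactly that bookkeeping by checking (A1)--(A5) against Lemma~\ref{lem:SO-centralizer}, the construction of $K$ and $G_2$, the cited transversality fact from \cite[\S3]{tshishiku-geocycles}, and Theorem~\ref{thm:tshishiku}. One small redundancy: Theorem~\ref{thm:tshishiku} already asserts orientability and embeddedness of $Y_i(p^k)$ for $k$ large, so you do not separately need the Raghunathan/Rohlfs--Schwermer input to secure (A4); it is subsumed in the threshold $\ca K$ coming from that theorem.
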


\section{Size of Lie groups over finite rings}\label{sec:finite-lie}

The Lie groups of finite type that are of interest in this paper fall into two groups: (1) $\SL_d(\ca O_F/\mf p^j)$, where $F$ is a number field and $\mf p\sbs\ca O_F$ is a prime ideal, and (2) the orthogonal groups $O(Q;\Z/p^j\Z)$ for a prime number $p\neq2$. In this section we will provide estimates of the size of these groups as functions of $j$, considering all other variables fixed.

Given two functions $f,g$ of $j$, we write $f\sim g$ if there is a constant $C$ (independent of $j$) such that $\frac{1}{C}\cdot g(j)\le f(j)\le C\cdot g(j)$ for all $j\geq 1$.  %group of isometries of a quadratic form on the $R$ module $R^d$ when $R=\Z/p^j\Z$. 

%\paragraph{$\SL_{n+1}(\Z/t\Z)$ case.} 
%
%It is well-known that $|\SL_{d}(\Z/t\Z)|\sim t^{d^2-1}$. 

\subsection{Size of $\SL_d(\ca O_F/\mf p^j)$} 

Fix a number field $F/\Q$. Throughout this section we consider an ideal $\mf t\sbs\ca O_F$ of the form $\mf t=\mf p^j$ for some prime ideal $\mf p$. Set $\bb F=\ca O_F/\mf p$. 
%We are interested in the size of $\SL_d(\ca O_F/\mf t)$ as a function of $\mf t$, or more specifically of its norm $N(\mf t):=|\ca O_F/\mf t|$. In particular, we will not worry about the size of expressions that are constants independent of $\mf t$, such as $|\bb F|$, $|\bb F^\ti|$, etc. 

% Given two functions $f,g$ of $\mf t$, we write $f\sim g$ if there is a constant $C$ (independent of $\mf t$) so that $\frac{1}{C}\cdot g(\mf t)\le f(\mf t)\le C\cdot g(\mf t)$ for all $\mf t=\mf p^j$. 

%\begin{prop}[Size of $\SL_d(\ca O_F/\mf t)$]
%Fix a number field $F/\Q$, and an ideal $\mf t$ in the ring of integers $\ca O_F$. For $d\ge2$, 
%\[
%|\SL_d(\ca O_F/\mf t)|\sim N(\mf t)^{d^2-1},
%\]
%where $N(\mf t)=|\ca O_F/\mf t|$ denotes the ideal norm. 
%\end{prop}

%\begin{prop}
%Fix a number field $F/\Q$, and a prime ideal $\mf p$ in the ring of integers $\ca O_F$. Let $\mf p$ be a prime ideal. Fix $i\ge1$ and denote $\mf t=\mf p^j$. For $d\ge2$, 
%\[|\SL_d(\ca O_F/\mf t)|\sim N(\mf t)^{d^2-1},\]
%where $N(\cdot)$ denotes the ideal norm (in particular $N(\mf p^j)=N(\mf p)^i=|\ca O_F/\mf p|^i$). 
%\end{prop}

\begin{prop}\label{prop:SL-size}
Fix $j\ge1$, and let $\mf t=\mf p^j$, where $\mf p\sbs\ca O_F$ is a prime ideal. For $d\ge2$, 
\[|\SL_d(\ca O_F/\mf t)|\sim N(\mf t)^{d^2-1},\]
where $N(\mf t)=|\ca O_F/\mf t|$ is the ideal norm. 
%\[|\SL_d(\ca O_F/\mf t)|=N(\mf t)^{d^2-1}\cdot
%\fr{\prod_{j=0}^{d-2}\big(N(\mf p)^d-N(\mf p)^j\big)}{N(\mf p)^{d(d-1)}}.\]In particular, as a function of $\mf t$, we have $|\SL_d(\ca O_F/\mf t)|\sim N(\mf t)^{d^2-1}$.
\end{prop}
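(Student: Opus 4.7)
The plan is to reduce modulo successive powers of $\mf p$ and analyze the kernels one layer at a time. Write $q=N(\mf p)$, so that $\bb F=\ca O_F/\mf p$ is a finite field of size $q$ and $N(\mf t)=q^j$. I will establish the two standard facts: (i) the reduction $\pi_j:\SL_d(\ca O_F/\mf p^{j+1})\onto \SL_d(\ca O_F/\mf p^j)$ is surjective for all $j\ge1$, and (ii) its kernel has size $q^{d^2-1}$. Combined with the base case $|\SL_d(\bb F)|\sim q^{d^2-1}$, induction on $j$ gives
\[|\SL_d(\ca O_F/\mf p^j)|=|\SL_d(\bb F)|\cdot q^{(j-1)(d^2-1)}\sim q^{j(d^2-1)}=N(\mf t)^{d^2-1}.\]

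For the base case, $|\SL_d(\bb F)|=\tfrac{1}{q-1}\prod_{i=0}^{d-1}(q^d-q^i)$ is a polynomial in $q$ of degree $d^2-1$ with leading coefficient $1$, so the implied constant is uniform over $q\ge 2$. For the kernel computation in (ii), I would identify the kernel of $\pi_j$ with the additive group $\{X\in\Mat_d(\bb F):\Tr(X)=0\}$ via $g=I+\varpi^j X$, where $\varpi$ is a uniformizer and $X$ is taken modulo $\mf p$. Since $j\ge1$, the expansion $\det(I+\varpi^j X)\equiv 1+\varpi^j\Tr(X)\pmod{\mf p^{2j}}$ has no higher order terms mod $\mf p^{j+1}$, so the condition $\det=1$ is equivalent to $\Tr(X)=0$ in $\bb F$; this yields exactly $q^{d^2-1}$ elements.

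For the surjectivity in (i), I would first invoke the easy surjectivity of $\GL_d(\ca O_F/\mf p^{j+1})\to\GL_d(\ca O_F/\mf p^j)$ (lift entries arbitrarily; the lift is invertible since its determinant reduces to a unit). Given $g\in\SL_d(\ca O_F/\mf p^j)$ and any $\GL_d$-lift $\til g\in\GL_d(\ca O_F/\mf p^{j+1})$, the element $\det(\til g)\in (\ca O_F/\mf p^{j+1})^\times$ reduces to $1$ mod $\mf p^j$, so it lies in the kernel of $(\ca O_F/\mf p^{j+1})^\times\to(\ca O_F/\mf p^j)^\times$. This kernel consists of units of the form $1+\varpi^j u$, and multiplying $\til g$ by $\diag(1,\ldots,1,(1+\varpi^j u)^{-1})$ produces an $\SL_d$-lift of $g$.

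The main obstacle is mostly bookkeeping: verifying (i) requires only a standard Hensel-style lifting, and (ii) hinges on the observation that $2j\ge j+1$ eliminates the quadratic-and-higher terms in the determinant expansion, which is where the hypothesis $j\ge 1$ is used. Nothing in the argument depends on $F$ beyond the fact that $\ca O_F/\mf p^j$ is a local ring with residue field of size $q$, so the same proof works uniformly in $\mf p$ once $q$ (and hence the implied constant) is fixed.
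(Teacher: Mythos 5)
Your proof is correct, but it takes a somewhat different route from the paper's. The paper does not pass layer-by-layer through the tower $\ca O_F/\mf p^{j+1}\to\ca O_F/\mf p^j$; instead it reduces mod $\mf p$ in one jump, establishing the two short exact sequences
$1\ra I+\Mat_d(M)\ra\GL_d(R)\ra\GL_d(\bb F)\ra 1$ and $1\ra 1+M\ra R^\ti\ra\bb F^\ti\ra 1$
(where $R=\ca O_F/\mf t$ and $M$ is its maximal ideal), then divides $|\GL_d(R)|$ by $|R^\ti|$ via the determinant sequence $1\ra\SL_d(R)\ra\GL_d(R)\ra R^\ti\ra 1$. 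This sidesteps entirely the determinant expansion $\det(I+\varpi^j X)\equiv 1+\varpi^j\Tr(X)$ that your argument needs, because the $\SL_d$-count is obtained as a quotient rather than directly; it also needs no uniformizer, only that $M$ is nilpotent. Your layer-by-layer approach buys something too: it produces the exact kernel size $q^{d^2-1}$ at each step and so gives the explicit formula $|\SL_d(\ca O_F/\mf p^j)|=|\SL_d(\bb F)|\cdot q^{(j-1)(d^2-1)}$ directly, and your inductive surjectivity argument is a clean Hensel-style lifting at the $\SL_d$ level (the paper proves surjectivity only for $\GL_d$ and $R^\ti$ and never needs surjectivity of the $\SL_d$ reduction). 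Both proofs are standard and correct; the one technical point your version relies on — that $2j\ge j+1$ kills the higher-order terms — is handled carefully, and the identification of the kernel with trace-zero matrices over the residue field is the right picture.
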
 

Proposition \ref{prop:SL-size} is surely well known. We include a proof for completeness and also because it is a good warm-up for the orthogonal case in the next section.

\begin{proof}[Proof of Proposition \ref{prop:SL-size}]
Set $R=\ca O_F/\mf t$. Given the short exact sequence
\[1\ra\SL_d(R)\ra\GL_d(R)\ra R^\ti\ra1,\]
we may break up our computation into computing $|\GL_d(R)|$ and $|R^\ti|$. 

\paragraph{Computing $|R^\ti|$.} Let $M\sbs R$ be the unique maximal ideal, which is the image of $\mf p\sbs\ca O_F$ under $\ca O_F\ra\ca O_F/\mf p^j\equiv R$. Note that $R/M\cong\bb F$. 

{\it Claim $1$.} The natural map $R^\ti\ra (R/M)^\ti=\bb F^\ti$ gives a short exact sequence
\begin{equation}\label{eqn:ring-units}1\ra(1+M)\ra R^\ti\ra \bb F^\ti\ra1.\end{equation}
Given the claim, we conclude that 
\begin{equation}\label{eqn:unit-size}|R^\ti|=|M|\cdot|\bb F^\ti|=N(\mf t)\>\fr{|\bb F^\ti|}{|\bb F|}.\end{equation}%=N(\mf t)\left(1-\frac{1}{N(\mf p)}\right).\]
Here we use that $|M|=\fr{|R|}{|R/M|}$ and $R/M=\bb F$. 

{\it Proof of Claim $1$.} We need to show that $R^\ti\ra (R/M)^\ti$ is surjective and has kernel $1+M$.

The kernel of $R^\ti\ra(R/M)^\ti$ is equal to $(1+M)\cap R^\ti$. In fact, $1+M$ is contained in $R^\ti$: for each $m\in M$, the element $1+m$ is invertible in $R$ because the ideal $M\sbs R$ is nilpotent ($M^j=0$). Therefore, $1+M$ is the kernel of $R^\ti\ra(R/M)^\ti$. 

To show that $R^\ti\ra(R/M)^\ti$ is surjective, fix $x\in R$ whose image in $R/M$ is invertible. To prove surjectivity, it suffices to show that $x$ is invertible. By assumption there exists $y\in R$ such that $xy=1+m$ for some $m\in M$. Since $1+m$ is invertible, this implies $xy$ is invertible; hence $x$ is also invertible, as desired.

\paragraph{Computing $|\GL_d(R)|$.} Similar to the preceding computation, we reduce to the following claim.

{\it Claim $2$.} The natural map $\GL_d(R)\ra\GL_d(R/M)=\GL_d(\bb F)$ gives a short exact sequence
\[1\ra \big[I+M_d(M)\big]\ra\GL_d(R)\ra\GL_d(\bb F)\ra1,\]
where $I$ is the $2\ti2$ identity matrix and $M_d(M)$ is the group of $d\ti d$ matrices with entries in $M$. 

The claim implies that
\begin{equation}\label{eqn:gl-size}|\GL_d(R)|=|M_d(M)|\cdot|\GL_d(\bb F)|=N(\mf t)^{d^2}\>\fr{|\GL_d(\bb F)|}{|\bb F|^{d^2}}. 
\end{equation}
%\begin{equation}\label{eqn:gl-size}|\GL_d(R)|=|M_d(M)|\cdot|\GL_d(R/M)|=\left(\fr{N(\mf t)}{N(\mf p)}\right)^{d^2}\cdot\prod_{j=0}^{d-1}\big(N(\mf p)^d-N(\mf p)^j\big).\end{equation}
Combining Equations (\ref{eqn:unit-size}) and (\ref{eqn:gl-size}), we conclude that 
\[|\SL_d(R)|=\fr{|\GL_d(R)|}{|R^\ti|}=N(\mf t)^{d^2-1}\cdot\fr{|\GL_d(\bb F)|}{|\bb F^\ti|\cdot|\bb F|^{d^2-1}},\]
which proves the proposition. Therefore, it remains only to prove Claim 2.

{\it Proof of Claim $2$.} The kernel of $\GL_d(R)\ra\GL_d(R/M)$ is the intersection of $I+M_d(M)$ with $\GL_d(R)$. But in fact $I+M_d(M)$ is contained in $\GL_d(R)$ because each $\mu\in M_d(M)$ is nilpotent ($\mu^j=0$). Furthermore, $\GL_d(R)\ra\GL_d(R/M)$ is surjective: given $A\in M_d(R)$ whose image in $M_d(R/M)$ is invertible, there exists $B$ so that $AB=I+\mu$ for some $\mu\in M_d(M)$. Since $1+\mu$ is invertible, this implies $AB$ and hence $A$ is invertible. 

This finishes the proof of Claim 2 and the proof of the proposition. 
\end{proof}

\subsection{Size of finite orthogonal groups}

Fix a prime $p\neq2$ and $d\ge2$. Let $Q\in M_d(\Z)$ be the matrix of a nondegenerate symmetric bilinear form $\pair{\cdot,\cdot}:\Z^d\ti\Z^d\ra\Z$, and assume that $Q$ remains nondegenerate after reducing mod $p$, i.e.\ $\det(Q)\not\equiv0\pmod{p}$. Define the orthogonal group
\[O(Q;\Z/p^j\Z)=\{g\in\GL_d(\Z/p^j\Z): g^t Q g=Q\}.\]
The reduction map 
\[\rho_j:\Z/p^j\Z\ra\Z/p^{j-1}\Z\] induces a homomorphism 
\[\pi_j:O(Q;\Z/p^j\Z)\ra O(Q;\Z/p^{j-1}\Z),\]
which we will use to inductively compute $|O(Q;\Z/p^j\Z)|$. For each $i\ge2$, set $M_i:=\ker(\rho_i)$, and define
\[S_i(Q)=\{\mu\in M_d(M_i): \mu^tQ+Q\mu=0\}.\]

\begin{prop}\label{prop:orthogonal-size}
With the preceding setup, 
\[|O(Q;\Z/p^j\Z)|=|O(Q;\Z/p\Z)|\cdot|S_2(Q)|\cdots|S_j(Q)|.\]
\end{prop}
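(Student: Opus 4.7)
The plan is to proceed by induction on $j$, using the reduction map $\pi_j : O(Q;\Z/p^j\Z) \to O(Q;\Z/p^{j-1}\Z)$. The base case $j=1$ is tautological, and the inductive step will follow from a short exact sequence
\[
1 \to \ker(\pi_j) \to O(Q;\Z/p^j\Z) \xrightarrow{\pi_j} O(Q;\Z/p^{j-1}\Z) \to 1
\]
once we identify $|\ker(\pi_j)| = |S_j(Q)|$ and prove that $\pi_j$ is surjective.

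For the kernel, an element of $\ker(\pi_j)$ has the form $I+\mu$ with $\mu \in M_d(M_j)$. Expanding gives
\[
(I+\mu)^t Q (I+\mu) = Q + \mu^t Q + Q\mu + \mu^t Q \mu.
\]
Since entries of $\mu$ lie in $M_j = p^{j-1}(\Z/p^j\Z)$, the entries of $\mu^t Q \mu$ lie in $p^{2(j-1)}(\Z/p^j\Z)$, which is zero for $j \ge 2$. Hence $I+\mu$ lies in $O(Q;\Z/p^j\Z)$ iff $\mu^t Q + Q\mu = 0$, i.e.\ iff $\mu \in S_j(Q)$. This gives the needed bijection $\ker(\pi_j) \leftrightarrow S_j(Q)$.

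The main obstacle is the surjectivity of $\pi_j$, which is a Hensel-style lifting argument that uses both $p \neq 2$ and $\det Q \not\equiv 0 \pmod p$. Given $g \in O(Q;\Z/p^{j-1}\Z)$, lift it to any $\tilde g \in M_d(\Z/p^j\Z)$. The defect $\tilde g^t Q \tilde g - Q$ is symmetric and vanishes mod $p^{j-1}$, so it equals $p^{j-1} E$ for a symmetric $E$ (well-defined mod $p$). I will look for a correction of the form $\tilde g(I+\nu)$ with $\nu \in M_d(M_j)$, so that
\[
(I+\nu)^t(Q + p^{j-1}E)(I+\nu) \equiv Q \pmod{p^j}.
\]
All cross-terms involving two factors from $M_j$ vanish mod $p^j$, so this reduces to $\nu^t Q + Q\nu \equiv -p^{j-1}E \pmod{p^j}$. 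Writing $\nu = p^{j-1}\mu$ with $\mu \in M_d(\bb F_p)$ and substituting $N = Q\mu$ (using that $Q$ is symmetric and invertible mod $p$) transforms the equation into $N^t + N = -E$ over $\bb F_p$. Since $p \neq 2$, the choice $N = -E/2$ works, proving surjectivity.

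Combining the two steps, $|O(Q;\Z/p^j\Z)| = |S_j(Q)| \cdot |O(Q;\Z/p^{j-1}\Z)|$, and iterating gives the claim. The only real difficulty is the Hensel lift, which hinges crucially on $p$ being odd; the kernel computation and induction are essentially formal.
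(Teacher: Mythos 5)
Your proof is correct, and the kernel computation and inductive setup match the paper exactly. The genuine difference is in how you establish surjectivity of $\pi_j$. The paper invokes the theorem (from Milnor--Husemoller) that $O(Q;R)$ over a local ring $R$ in which $2$ is a unit is generated by reflections $r_x$ with $\pair{x,x}$ a unit, and then observes that each such reflection lifts along $\pi_j$. You instead run an explicit Hensel/Newton correction: lift $g$ arbitrarily to $\tilde g$, note the defect $\tilde g^t Q \tilde g - Q = p^{j-1}E$ with $E$ symmetric mod $p$, and solve the linearized equation $N^t + N = -E$ by $N = -E/2$, using $p\neq 2$ to divide by $2$ and $\det Q \not\equiv 0 \pmod p$ to invert $Q$ and recover $\mu = Q^{-1}N$. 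Both arguments use the same two hypotheses ($p$ odd, $Q$ nondegenerate mod $p$). Your version is more self-contained and makes the role of those hypotheses transparent in the linear algebra; the paper's version is shorter given the citation and also yields the (unused here) structural fact that the orthogonal groups over these rings are reflection-generated. One small point worth stating explicitly in your write-up: once $(\tilde g(I+\nu))^tQ(\tilde g(I+\nu)) = Q$ holds exactly in $\Z/p^j\Z$, invertibility of $\tilde g(I+\nu)$ is automatic because $\det Q$ is a unit, so $\det(\tilde g(I+\nu))^2 = 1$.
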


\begin{proof}
First we show that $\pi_j$ is surjective for each $j\ge2$. For this, we use that $O(Q;\Z/p^j\Z)$ is generated by reflections 
\[r_x: v\mapsto v-2\frac{\pair{v,x}}{\pair{x,x}}\>x,\] where $x\in(\Z/p^j\Z)^d$ is such that $\pair{x,x}$ is a unit in $\Z/p^j\Z$; see \cite[Ch.\ I, Cor.\ 4.3]{milnor-husemoller}. Given this, it is straightforward to check that if $x\in(\Z/p^{j-1}\Z)^d$ is such that $\pair{x,x}$ is a unit and $\hat x\in (\Z/p^j\Z)^d$ is a lift of $x$, then $\pair{\hat x,\hat x}$ is a unit in $\Z/p^j\Z$, and $\pi_j(r_{\hat x})=r_x$. Thus $\pi_j$ is surjective. 

It remains to understand $\ker(\pi_j)$. Similar to the proof of Proposition \ref{prop:SL-size}, there is a short exact sequence 
\[1\ra\big[I+M_d(M_j)\big]\ra\GL_d(\Z/p^j\Z)\ra\GL_d(\Z/p^{j-1}\Z)\ra1,\]
so $\ker(\pi_j)$ is the intersection of $I+M_d(M_j)$ with $O(Q;\Z/p^j\Z)$. If $I+\mu$ belongs to $I+M_d(M_j)$ and $O(Q;\Z/p^j\Z)$, then $(1+\mu)^tQ(1+\mu)=Q$, which implies that $\mu^tQ+Q\mu=0$ (note that $\mu^tQ\mu$ is 0 because $M_j^2=0$ in $\Z/p^j\Z$). Therefore $|\ker(\pi_j)|=|S_j(Q)|$, and the Proposition follows by induction on $j$. 
\end{proof}

\paragraph{Example: $Q=Q_n$.} When $Q=Q_n$ as in \S\ref{sec:flats-SO}, one computes 
\[|S_i(Q_n)|=|M_i|^{n^2}+2|M_i|^{n(n-1)/2} = p^{n^2} + 2 p^{n(n-1)/2}.\]
for each $i\ge2$. 
Then Proposition \ref{prop:orthogonal-size} implies that if $p\neq2$ and we set $t=p^j$, then $|O(Q_n;\Z/t\Z)|\sim ( p^{n^2} + 2 p^{n(n-1)/2})^j$. 

For a ring $R$, define $\Om(Q_n;R)$ as the kernel of the spinor homomorphism
\begin{equation}\label{eqn:omega}\Om(Q_n;R):=\ker\big[\SO(Q_n;R)\xra{\theta} R^\ti/(R^\ti)^2\big].\end{equation}
When $R=\Z/p^j\Z$, the subgroups $\Om(Q_n;R)\sbs\SO(Q_n;R)$ and $\SO(Q_n;R)\sbs O(q_n;R)$ each have index at most 2, so we conclude the following corollary. %Since the sizes of $\SO(Q_n;R)$ and $O(Q_n;R)$ and of $\R^\ti$ and $\R^\ti/(\R^\ti)^2$ differ by a factor of 2, we conclude the following corollary. 

\begin{cor}\label{cor:omega}
Fix a prime $p\neq2$, and set $t=p^j$ for $j\ge1$. Then
\[|\Om(Q_n;\Z/t\Z)|\sim \left( p^{n^2} + 2 p^{n(n-1)/2}\right)^j.\]
\end{cor}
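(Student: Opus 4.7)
The plan is to reduce to the computation of $|O(Q_n;\Z/p^j\Z)|$ already carried out in the example immediately preceding the corollary, and then argue that passing from $O$ to $\Om$ only changes the count by a bounded multiplicative constant. Explicitly, the example computes $|S_i(Q_n)|=p^{n^2}+2p^{n(n-1)/2}$ for each $i\ge2$, so Proposition \ref{prop:orthogonal-size} gives
\[|O(Q_n;\Z/p^j\Z)|=|O(Q_n;\Z/p\Z)|\cdot\prod_{i=2}^{j}|S_i(Q_n)|\sim (p^{n^2}+2p^{n(n-1)/2})^j,\]
where the constants hidden in $\sim$ depend only on $|O(Q_n;\Z/p\Z)|$ and on the quotient between the $j=1$ factor and a single copy of $p^{n^2}+2p^{n(n-1)/2}$, both of which are independent of $j$.

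Next I would control the indices $[O(Q_n;R):\SO(Q_n;R)]$ and $[\SO(Q_n;R):\Om(Q_n;R)]$ for $R=\Z/p^j\Z$. The first index is at most $2$, since it is the image of the determinant homomorphism $O(Q_n;R)\ra\{\pm1\}$. For the second, recall that $\Om(Q_n;R)$ is by definition the kernel of the spinor norm $\theta:\SO(Q_n;R)\ra R^\ti/(R^\ti)^2$, so it suffices to observe that $|R^\ti/(R^\ti)^2|\le 2$. Since $p$ is odd, $(\Z/p^j\Z)^\ti$ is cyclic of order $(p-1)p^{j-1}$, and squaring is a homomorphism whose kernel has size $\gcd(2,(p-1)p^{j-1})=2$; this forces $|(\Z/p^j\Z)^\ti/((\Z/p^j\Z)^\ti)^2|=2$.

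Combining these two index bounds gives
\[\tfrac{1}{4}\,|O(Q_n;\Z/p^j\Z)|\le |\Om(Q_n;\Z/p^j\Z)|\le |O(Q_n;\Z/p^j\Z)|,\]
and then substituting the estimate for $|O(Q_n;\Z/p^j\Z)|$ yields $|\Om(Q_n;\Z/p^j\Z)|\sim (p^{n^2}+2p^{n(n-1)/2})^j$, as desired. There is no real obstacle here — the only point requiring any care is confirming that the spinor norm takes values in a group of order at most $2$ for $R=\Z/p^j\Z$ with $p$ odd, which follows from the elementary structure of the unit group.
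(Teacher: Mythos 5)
Your argument is correct and follows the paper's own approach exactly: the paper likewise deduces the corollary from Proposition \ref{prop:orthogonal-size} together with the observation that $\Om(Q_n;R)\le\SO(Q_n;R)\le O(Q_n;R)$ each have index at most $2$ when $R=\Z/p^j\Z$. You simply spell out the index bounds (via the determinant and via $|R^\ti/(R^\ti)^2|=2$ for $p$ odd), which the paper asserts without elaboration.
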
 

\paragraph{Example: $Q=\wtil Q_n$.} Define 
\begin{equation}\label{eqn:form2}\wtil Q_n=\left(\begin{array}{cc}Q_n&0\\0&1\end{array}\right)\in M_{2n+1}(\Z).\end{equation}
Here one computes that $|S_i(\wtil Q_n)|=|S_i(Q_n)|+|M_i|^n = p^{n^2} + 2 p^{n(n-1)/2} + p^n$ for each $i\ge 2$.

\begin{cor}\label{cor:omega2}
Fix a prime $p\neq2$, and set $t=p^j$ for $j\ge 1$. Then
\[|\Om(\wtil Q_n;\Z/t\Z)|\sim \left( p^{n^2} + 2 p^{n(n-1)/2} + p^n \right)^j.\]
\end{cor}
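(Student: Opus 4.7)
The plan is to derive Corollary \ref{cor:omega2} in close analogy with the derivation of Corollary \ref{cor:omega}: first I would apply Proposition \ref{prop:orthogonal-size} to compute $|O(\wtil Q_n;\Z/p^j\Z)|$ up to a $j$-independent constant, and then pass from $O$ to $\Om$ using a uniform bound on the index $[O:\Om]$ that is valid for every $j$.

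For the first step, invoking Proposition \ref{prop:orthogonal-size} with $Q=\wtil Q_n$ together with the identity $|S_i(\wtil Q_n)| = p^{n^2}+2p^{n(n-1)/2}+p^n$ (stated in the example immediately preceding the corollary, and independent of $i$) yields
\[
|O(\wtil Q_n;\Z/p^j\Z)| \;=\; |O(\wtil Q_n;\Z/p\Z)| \cdot \bigl(p^{n^2}+2p^{n(n-1)/2}+p^n\bigr)^{j-1}.
\]
The factor $|O(\wtil Q_n;\Z/p\Z)|$ depends only on $p$ and $n$, so this already gives $|O(\wtil Q_n;\Z/p^j\Z)| \sim \bigl(p^{n^2}+2p^{n(n-1)/2}+p^n\bigr)^j$ in the sense of the relation $\sim$ defined at the start of \S\ref{sec:finite-lie}.

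For the second step, I would bound $[O(\wtil Q_n;\Z/p^j\Z):\Om(\wtil Q_n;\Z/p^j\Z)]$ uniformly in $j$. The determinant gives $[O:\SO]\le 2$. For the spinor quotient, the target of $\theta$ is $(\Z/p^j\Z)^\times/((\Z/p^j\Z)^\times)^2$, which is cyclic of order at most $2$ because $(\Z/p^j\Z)^\times$ is cyclic of even order $(p-1)p^{j-1}$ (using $p\ne 2$). Hence $[\SO:\Om]\le 2$, and combined this gives $[O:\Om]\le 4$ uniformly in $j$. Therefore $|\Om(\wtil Q_n;\Z/p^j\Z)|\sim |O(\wtil Q_n;\Z/p^j\Z)|$, and the corollary follows.

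There is essentially no obstacle here: the argument is a mechanical adaptation of the proof of Corollary \ref{cor:omega}. The only step that is not purely formal is the computation $|S_i(\wtil Q_n)| = p^{n^2}+2p^{n(n-1)/2}+p^n$ itself, but it is already recorded in the example preceding the corollary; writing $\mu\in M_{2n+1}(M_i)$ in block form adapted to the decomposition $\wtil Q_n = Q_n\oplus (1)$, the condition $\mu^tQ+Q\mu=0$ separates into the defining condition for $S_i(Q_n)$ on the $2n\times 2n$ block, vanishing of the bottom-right scalar entry, and a free $n$-dimensional choice relating the two off-diagonal blocks to one another, which accounts for the additional $|M_i|^n = p^n$ factor.
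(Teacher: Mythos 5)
Your overall strategy---invoke Proposition \ref{prop:orthogonal-size} to peel off the $j$-independent factor $|O(\wtil Q_n;\Z/p\Z)|$, then bound $[O:\Om]$ uniformly by $4$---is exactly the paper's (implicit) argument; the authors give no separate proof because it is the same as for Corollary \ref{cor:omega}. Your index bound is correct: $[O:\SO]\le 2$ from the determinant, and since $(\Z/p^j\Z)^\times$ is cyclic of even order $(p-1)p^{j-1}$ for odd $p$, the quotient $(\Z/p^j\Z)^\times/((\Z/p^j\Z)^\times)^2$ has order $2$, so $[\SO:\Om]\le 2$.

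The gap is in your final paragraph. Decompose $\mu\in M_{2n+1}(M_i)$ adapted to $\wtil Q_n = Q_n\oplus(1)$ as $\mu=\left(\begin{smallmatrix} A & b \\ c^t & d\end{smallmatrix}\right)$ with $A\in M_{2n}(M_i)$, $b,c$ columns of length $2n$, and $d$ a scalar. Then $\mu^t\wtil Q_n+\wtil Q_n\mu=0$ unpacks to $A\in S_i(Q_n)$, $c=-Q_n b$, and $2d=0$ (so $d=0$ since $p\ne 2$). The column $b$ has $2n$ free entries, not $n$, so the extra factor beyond $|S_i(Q_n)|$ is $|M_i|^{2n}=p^{2n}$, not $|M_i|^n=p^n$ as you assert. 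This is consistent with the Lie-algebra dimension count: $\dim\mathfrak{so}(2n+1)-\dim\mathfrak{so}(2n)=n(2n+1)-n(2n-1)=2n$. So the block computation you sketch does not establish the stated formula for $|S_i(\wtil Q_n)|$, and in fact it contradicts it. (You should also be suspicious of the paper's displayed expressions: as written, $|S_i(Q_n)|=p^{n^2}+2p^{n(n-1)/2}$ and $|S_i(\wtil Q_n)|=p^{n^2}+2p^{n(n-1)/2}+p^n$ are sums, which cannot be the order of an $\mathbb{F}_p$-vector space; reading the terms as contributing exponents gives $|S_i(Q_n)|=p^{2n^2-n}$, which is correct, but then $|S_i(\wtil Q_n)|$ would read as $p^{2n^2}$, still off from the true value $p^{2n^2+n}$.) You should recompute $|S_i(\wtil Q_n)|$ directly rather than quoting the formula, since the issue propagates into the statement of Corollary \ref{cor:omega2} itself.
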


\subsection{Finite tori}

%\begin{lem}
%Let $F/\Q$ be a number field. Assume $A\in\SL_d(\ca O_F)$ has irreducible characteristic polynomial. There exists a finite set $\ca S$ of prime ideals of $\ca O_F$ so that if $\mf p\sbs\ca O_F$ is a prime ideal not in $\ca S$, then the image of $A$ in $\SL_d(\ca O_F/\mf p^j)$ is diagonalizable for each $i\ge1$. 
%\end{lem}

For the proof of the main theorems, we will need to know the size of the centralizer of certain elements in certain finite groups of Lie type. 

\begin{lem}\label{lem:diagonalizable}
Let $F/\Q$ be a number field. Fix $A\in\SL_d(\ca O_F)$ and assume that its characteristic polynomial $\chi$ is irreducible in $F[x]$. There are infinitely many prime ideals $\mf p\sbs\ca O_F$ such that for each $j\ge1$, 
\begin{enumerate}
\item[(i)] $\chi$ is square-free and splits completely in $(\ca O_F/\mf p^j)[x]$, and 
\item[(ii)] $A$ is diagonalizable in $\GL_{d}(\ca O_F/\mf p^j)$.
\end{enumerate} 
Consequently, for these primes, the centralizer of $A$ in $\GL_{d}(\ca O_F/\mf p^j)$ has size $\ca C^{d}$, where $\ca C=N(\mf p^j)\>\fr{N(\mf p)-1}{N(\mf p)}$ and $N(\mf t)=|\ca O_F/\mf t|$ is the ideal norm.
\end{lem}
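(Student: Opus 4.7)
Proof plan for Lemma \ref{lem:diagonalizable}.

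The plan is to produce primes $\mf p \sbs \ca O_F$ at which $\chi$ already splits completely with distinct roots in the residue field, and then lift this factorization to $\ca O_F/\mf p^j$ via Hensel's lemma. Let $E/F$ be the splitting field of $\chi$ over $F$, which is a finite Galois extension since $\chi$ is separable (its roots are distinct because $\chi$ is irreducible over the characteristic-zero field $F$). Let $\Delta \in \ca O_F$ be the discriminant of $\chi$. By the Chebotarev density theorem applied to $E/F$ (see e.g.\ \cite[Ch.\ VII, \S13]{neukirch}), the set of prime ideals $\mf p \sbs \ca O_F$ that are unramified in $E$ and split completely in $E$ has positive density, and in particular is infinite. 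Discarding the finitely many $\mf p$ dividing $\Delta$, for each such $\mf p$ the polynomial $\chi$ factors into $d$ distinct linear factors over the residue field $\bb F = \ca O_F/\mf p$; say $\chi \equiv \prod_{i=1}^d (x - \bar\la_i) \pmod{\mf p}$ with $\bar\la_i \neq \bar\la_k$ for $i \neq k$.

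Next I would apply Hensel's lemma to lift each simple root $\bar\la_i$ to a root $\la_i^{(j)} \in \ca O_F/\mf p^j$, for each $j \ge 1$. Because the lifts are uniquely determined by their reductions mod $\mf p$, they remain pairwise distinct; moreover $\la_i^{(j)} - \la_k^{(j)}$ is a unit in $R_j := \ca O_F/\mf p^j$ whenever $i \neq k$, since it is nonzero in $\bb F$ and $\mf p$ is the unique maximal ideal of $R_j$ (cf.\ the proof of Proposition \ref{prop:SL-size}). So $\chi$ is square-free and splits completely in $R_j[x]$, giving (i).

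For (ii), set
\[ e_i \;:=\; \prod_{k \neq i} \frac{A - \la_k^{(j)} I}{\la_i^{(j)} - \la_k^{(j)}} \;\in\; M_d(R_j). \]
Because $A$ satisfies $\chi(A) = 0$ by Cayley--Hamilton, the standard Lagrange-interpolation identities show that the $e_i$ are orthogonal idempotents summing to $I$ and satisfying $A e_i = \la_i^{(j)} e_i$. Reducing mod $\mf p$ shows that each $e_i$ has rank $1$ image over $\bb F$, and Nakayama's lemma then implies $e_i(R_j^d)$ is a free rank-$1$ direct summand of $R_j^d$. Thus $R_j^d = \bigoplus_i e_i(R_j^d)$ is a decomposition into free rank-$1$ eigenspaces for $A$, so $A$ is conjugate in $\GL_d(R_j)$ to the diagonal matrix $\diag(\la_1^{(j)}, \ldots, \la_d^{(j)})$.

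Finally, since $A$ is conjugate to a diagonal matrix with pairwise distinct entries whose differences are units, its centralizer in $M_d(R_j)$ consists exactly of matrices that are diagonal in the eigenbasis, so the centralizer in $\GL_d(R_j)$ is isomorphic to $(R_j^\ti)^d$. By (\ref{eqn:unit-size}) we have $|R_j^\ti| = N(\mf p^j) \cdot \frac{N(\mf p)-1}{N(\mf p)} = \ca C$, so the centralizer has size $\ca C^d$, completing the proof.

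The main obstacle I anticipate is the ring-theoretic diagonalization step: over a field, simple eigenvalues trivially give diagonalizability, but over $R_j$ (which has nilpotents) one must invoke Hensel plus the unit-difference condition to produce the interpolation idempotents and then use Nakayama to conclude the $e_i(R_j^d)$ are free summands rather than merely projective modules.
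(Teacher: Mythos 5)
Your proof is correct. For part (i), your argument is essentially the same as the paper's: both rest on Chebotarev plus Hensel. You phrase it via primes splitting completely in the splitting field and avoiding the discriminant of $\chi$, whereas the paper works with the simple extension $E = F(\lambda)$ for a single root $\lambda$ and avoids the conductor of $\ca O_F[\lambda]$ (citing Neukirch Ch.\ I, Prop.\ 8.3). These are interchangeable: a prime splits completely in $F(\lambda)$ exactly when it splits completely in the Galois closure, and discarding divisors of the discriminant is only a slightly coarser condition than discarding conductor divisors. Either way one gets infinitely many admissible primes.

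For part (ii), you take a genuinely different route. The paper cites Richter--Wardlaw to produce a unimodular eigenvector in each eigenspace $\{v : Av = \lambda_i v\}$, then shows these $d$ vectors are linearly independent by the usual Vandermonde-style argument (using that $\lambda_i - \lambda_k$ is a unit), and finally argues they span by a cardinality count. You instead use the Chinese Remainder Theorem decomposition $R_j[x]/(\chi) \cong \prod_i R_j[x]/(x - \lambda_i^{(j)})$, push forward the Lagrange-interpolation idempotents $e_i$ through the map $x \mapsto A$ (using Cayley--Hamilton), and then invoke that projective modules over the local ring $R_j$ are free, with rank computed by reduction mod $\mf p$. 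Your approach is more self-contained (no reference to Richter--Wardlaw needed), makes the algebra transparent via the idempotent splitting, and cleanly explains why the eigenmodules are free rank-one summands rather than merely submodules. The paper's approach is more elementary and hands-on. Both give the same conclusion, including the $(R_j^\ti)^d$ centralizer via the unit-difference argument. No gaps.
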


%\begin{lem}\label{lem:diagonalizable}
%Let $F/\Q$ be a number field with ring of integers $\ca O_F$. Fix $A\in\SL_{n+1}(\ca O_F)$ and let $\chi$ be its characteristic polynomial. Assume that $\chi$ is square-free and splits completely over $\ca O_F/\mf p$. Then $A$ is diagonalizable in $\GL_{n+1}(\ca O_F/\mf p^k)$ for each $k\ge1$. Consequently, the centralizer of $A$ in $\GL_{n+1}(\ca O_F/\mf p^k)$ has size $N(\mf p^k)^{n+1}$.
%\end{lem}

For the last statement, compare with (\ref{eqn:unit-size}). As we will explain, (i) is well-known, and (i) implies (ii). 

\begin{rmk}There is reason to be cautious when trying to diagonalize a matrix $A\in \GL_d(R)$ when $R$ is a commutative ring that is not a field because in this generality, (1) a given eigenspace $\{v\in R^d: A v=\la v\}$ need not have a basis, (2) the sum of eigenspaces need not be a direct sum, and (3) the sum of eigenspaces need not span $R^d$. These phenomena are illustrated in \cite{richter-wardlaw}. These issues can be avoided if the characteristic polynomial of $A$ is square-free and splits completely in $\big(\ca O_F/\mf p\big)[x]$, as will be explained in the proof of Lemma \ref{lem:diagonalizable}. \end{rmk}

\begin{proof}[Proof of Lemma \ref{lem:diagonalizable}]

We begin with (i). Fix a root $\la$ of $\chi$, and let $E=F(\la)\cong F[x]/(\chi)$. 

First we recall that there are infinitely many primes $\mf p\sbs\ca O_F$ so that $\chi$ is square-free and splits completely in $\big(\ca O_F/\mf p\big)[x]$. By Chebotarev's density theorem \cite[Ch.\ VII, Cor.\ 13.6 and Ch.\ I, \S8, Exer.\ 4]{neukirch}, there are infinitely many primes $\mf p$ that split completely $\mf p\ca O_E=\mf q_1\cdots\mf q_d$. If $\mf p$ is such a prime, and if we assume further that $\mf p$ is not one of the (finitely many) divisors of the conductor ideal $\mf c\sbs\ca O_F$ of the ring $\ca O_F[\la]$, then $\chi$ is square-free and splits completely in $\big(\ca O_F/\mf p\big)[x]$; see \cite[Ch.\ I, Prop.\ 8.3]{neukirch}. 

%Let $\mf c\sbs\ca O_F$ be the conductor of $\ca O_F[\la]$, which is defined as the largest ideal of $\ca O_F$ that is contained in $\ca O_F[\la]$ \cite[Ch.\ I, \S8]{neukirch}. Assume further that $\mf p\sbs\ca O_F$ is not one of the finitely many prime ideals that divide $\mf c$. 

If $\chi$ is square-free and splits completely in $\big(\ca O_F/\mf p\big)[x]$, then the same also holds in $\big(\ca O_F/\mf p^j\big)[x]$ for each $j\ge1$ by Hensel's lemma \cite[Ch.\ II, Lem.\ 4.6]{neukirch}. Furthermore, the uniqueness part of Hensel's lemma implies that $\chi$ has exactly $n$ roots in $\ca O_F/\mf p^j$, one lying over each root in $\ca O_F/\mf p$. This proves (i). 

Next we prove (ii). Fixing $j\ge 1$, we argue that $A$ is diagonalizable in $\GL_d(\ca O_F/\mf p^j)$ by showing $A$ has a basis of eigenvectors. Let $\la_1,\ldots,\la_d\in \ca O_F/\mf p^j$ be the roots of $\chi$, and let $E_1,\ldots,E_d\sbs (\ca O_F/\mf p^j)^n$ be the corresponding eigenspaces. Each $E_i$ contains a linearly independent vector $v_i$ \cite[Lem.\ 2]{richter-wardlaw} (here it is important that $\chi(\la_i)$ is zero and not merely a zero-divisor). Furthermore, the standard proof from linear algebra shows that $v_1,\ldots,v_n$ are linearly independent; cf.\ \cite[Lem.\ 3]{richter-wardlaw} (here it is important that for $i\neq j$, $\la_i-\la_j$ is not a zero-divisor in $\ca O_F/\mf p^j$, which holds because $\la_i$ and $\la_j$ map to distinct elements in $\ca O_F/\mf p$). Finally, $v_1,\ldots,v_n$ span $(\ca O_F/\mf p^j)^n$, as can be seen by counting since the span of $v_1,\ldots,v_n$ has the same cardinality as $(\ca O_F/\mf p^j)^n$. This shows that $v_1,\ldots,v_n$ are an eigenbasis for $A$, so $A$ is diagonalizable in $\GL_d(\ca O_F/\mf p^j)$. 
\end{proof}

We prove a similar theorem for the orthogonal case. 

\begin{lem}\label{lem:diagonalizable-SO}
Fix $A\in\SO(Q_n;\Z)$ and assume that its characteristic polynomial factors $\chi=\chi_1\cdots\chi_m$, where the $\chi_i$ are distinct and irreducible in $\Q[x]$. There exist infinitely many primes $p$ such that for each $j\ge1$, 
\begin{enumerate}
\item[(i)] $\chi$ is square-free and splits completely in $(\Z/p^j\Z)[x]$, and 
\item[(ii)] $A$ is diagonalizable in $\SO(Q_n;\Z/p^j\Z)$.
\end{enumerate} 
Consequently, for these primes, the centralizer of $A$ in $\SO(Q_n;\Z/p^j\Z)$ has size $\ca C^n$, where $\ca C=p^j\left(\fr{p-1}{p}\right)$.
\end{lem}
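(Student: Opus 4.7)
The plan is to mirror the proof of Lemma \ref{lem:diagonalizable} for parts (i) and (ii), and then exploit the orthogonal pairing structure to compute the size of the centralizer. For (i), I would let $E/\Q$ denote the splitting field of $\chi$, which is the compositum of the fields $\Q[x]/(\chi_i)$ for $i=1,\ldots,m$. Chebotarev's density theorem, applied exactly as in the proof of Lemma \ref{lem:diagonalizable}, yields infinitely many primes $p$ that split completely in $E$, so that each $\chi_i$ splits into distinct linear factors in $(\Z/p\Z)[x]$. After excluding the finitely many primes dividing the discriminant of $\chi$ (to ensure that the roots of distinct $\chi_i$ remain distinct mod $p$) and the relevant conductor ideals, $\chi$ itself is square-free and splits completely mod $p$, and Hensel's lemma lifts this to $(\Z/p^j\Z)[x]$ for every $j\ge 1$. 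With (i) in hand, the diagonalization argument in the proof of Lemma \ref{lem:diagonalizable} applies verbatim and produces an eigenbasis for $A$ in $(\Z/p^j\Z)^{2n}$, giving (ii).

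To compute the centralizer, I would exploit the fact that $A\in\SO(Q_n)$ implies $A^{-1}=Q_n^{-1}A^t Q_n$, so $\chi$ is stable under $\la\mapsto 1/\la$ and its $2n$ eigenvalues in $\Z/p^j\Z$ pair as $\{\la_i,1/\la_i\}_{i=1}^n$ (all distinct mod $p$ by (i)). Labeling eigenvectors so that $Au_i=\la_i u_i$ and $Av_i=\la_i^{-1}v_i$, the standard argument using $Q_n(Ax,Ay)=Q_n(x,y)$, together with the fact that the relevant eigenvalue products and ratios are units distinct from $1$ in $\Z/p^j\Z$, forces $Q_n(u_i,u_j)=Q_n(v_i,v_j)=0$ for all $i,j$ and $Q_n(u_i,v_j)=0$ for $i\neq j$. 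Non-degeneracy of $Q_n$ modulo $p^j$ (which holds because $\det Q_n=\pm 1$) then forces each $Q_n(u_i,v_i)$ to be a unit in $\Z/p^j\Z$, so $\{u_i,v_i\}$ is a hyperbolic basis for $Q_n$ over $\Z/p^j\Z$.

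Any element $g$ centralizing $A$ in $\GL_{2n}(\Z/p^j\Z)$ must preserve each one-dimensional eigenspace, so $gu_i=c_i u_i$ and $gv_i=d_i v_i$ for some $c_i,d_i\in\Z/p^j\Z$. Preservation of $Q_n$ reduces to the single relation $c_i d_i=1$, which both makes each $c_i$ a unit and automatically forces $\det g=\prod_i c_i d_i=1$, so $g$ lies in $\SO(Q_n;\Z/p^j\Z)$. Therefore the centralizer is isomorphic to $\big((\Z/p^j\Z)^\times\big)^n$, of size $\ca C^n$ with $\ca C=p^j(p-1)/p$. The main technical point is verifying that $Q_n(u_i,v_i)$ is a unit modulo $p^j$, which is what guarantees the hyperbolic pairing structure lifts to $\Z/p^j\Z$ and gives the centralizer its expected size; once this is in place, the rest is a direct transposition of the argument in Lemma \ref{lem:diagonalizable}.
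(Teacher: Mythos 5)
Your proposal is correct and follows essentially the same route as the paper: Chebotarev plus Hensel for (i), reducing to the $\GL$-diagonalization of Lemma~\ref{lem:diagonalizable} and then assembling the eigenvectors into a hyperbolic basis for (ii), and the diagonal-matrix description of the centralizer for the size count. The only presentational differences are that you invoke the discriminant of $\chi$ rather than the paper's B\'ezout argument to keep the roots of the distinct $\chi_i$ separated mod $p$, and that you deduce isotropy of the eigenvectors from the genuine $\{\lambda,1/\lambda\}$ pairing forced by square-freeness, whereas the paper separately excludes the finitely many primes dividing $\chi(\pm 1)$; both are valid (and in fact equivalent, since a root $\equiv\pm1\pmod p$ would make $\mu$ and $1/\mu$ collide and violate square-freeness, given that $\pm1$ are not roots of $\chi$ over $\Q$ — an assumption both arguments implicitly rely on).
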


The proof is similar to the proof of Lemma \ref{lem:diagonalizable}, but with a few key differences. Specifically we need to deal with the fact that $\chi$ is not assumed to be irreducible and we need to show that $A$ is diagonalizable in $\SO(Q_n;\Z/p^j\Z)$ rather than just $\GL_{2n}(\Z/p^j\Z)$. 

\begin{proof}[Proof of Lemma \ref{lem:diagonalizable-SO}]

\boxed{(i)} Let $\la_i$ be a root of $\chi_i$, and set $E_i=F(\la_i)\cong F[x]/(\chi_i)$ for $1\le i\le m$. Set $\wtil E=F(\la_1,\ldots,\la_m)$. By Chebotarev's theorem, there are infinitely many primes that split completely in $\wtil E$, and hence also in each $E_i$ (see \cite[\S13]{neukirch}). %(a prime in $\wtil E$ lies over a unique prime of $E_i$ for each $i$). 
As in the proof of Lemma \ref{lem:diagonalizable}, for such a prime $p$, if $p$ does not divide the conductor $c_i\in\Z$ of $\Z[\la_i]$, then $\chi_i$ splits completely in $(\Z/p^j\Z)[x]$ for each $j\ge1$. To finish the proof of (i), we argue that after imposing finitely many more constraints on $p$, we may assume that $\chi$ is squarefree in $(\Z/p^j\Z)[x]$. By assumption, for each $i<i'$, $\chi_{i}$ and $\chi_{i'}$ are relatively prime in $\Q[x]$, so 
\begin{equation}\label{eqn:rel-prime}a_{i,i'}\chi_i+b_{i,i'}\chi_{i'}=1\end{equation} for some $a_{i,i'},b_{i,i'}\in\Q[x]$. If $p$ does not divide the numerator or denominator of any coefficient of $a_{i,i'}$ or $b_{i,i'}$ (for $1\le i<i'\le m$), then the relation (\ref{eqn:rel-prime}) holds in $(\Z/p^j\Z)[x]$  (with both $a_{i,i'}$ and $b_{i,i'}$ nonzero), so $\chi_i,\chi_{i'}$ are relatively prime in $(\Z/p^j\Z)[x]$ (for each $i<i'$), which implies that they have no common roots. Hence $\chi$ is square-free. 

\boxed{(ii)} Fix a prime $p$ as in the preceding paragraph and fix $j\ge1$. By Lemma \ref{lem:diagonalizable}, $A$ is diagonalizable in $\GL_{2n}(\Z/p^j\Z)$. Let $v_1,\ldots,v_{2n}$ be an eigenbasis for $A$. 

We choose $p$ so that each eigenvector is isotropic. If $v$ is not isotropic, then its eigenvalue is $\pm1\pmod{p}$ (since $\pair{v,v}=\pair{Av,Av}=\la^2\pair{v,v}$). Therefore, if we assume $p$ does not divide $\chi(1)$ or $\chi(-1)$, then $\pm1$ are not roots of $\chi$ in $\Z/p^j\Z$, which implies $v_1,\ldots,v_{2n}$ are isotropic in $(\Z/p^j\Z)^{2n}$. 
%Assume further that $p$ does not divide $\chi(1)$ or $\chi(-1)$. Then $\pm1$ are not roots of $\chi$ in $\Z/p^j\Z$. This implies that $v_1,\ldots,v_{2n}$ are isotropic vectors in $(\ca O_F/\mf p^j)^{2n}$. 

Vectors in our eigenbasis come in pairs whose eigenvalues are inverses, so we write instead $v_1,\ldots,v_n,u_1,\ldots,u_n$, where $v_i,u_i$ have eigenvalues $\la_i,\frac{1}{\la_i}$. For each $i$, the vectors $v_i,u_i$ span a hyperbolic summand; furthermore, up to scaling $v_i$, we can assume that the intersection matrix of $v_i,u_i$ is 
\[\left(\begin{array}{cc}0&1\\1&0\end{array}\right).\]
Therefore there is an isometry of $(\Z/p^j\Z)^{2n}$ taking the standard basis $\{e_i,f_i\}$ to  $\{v_i,u_i\}$. This shows that $A$ is diagonalizable in $\SO(Q_n;\Z/p^j\Z)$. 
\end{proof}

%
%\section{Arithmetic lattices in orthogonal groups}
%
%Commensurability classification of lattices. $\Q$-forms of $\SO(p,q)$ are determined by the $\Q$-rank and a $\Q$-anisotropic form. 
%
%

%\section{Strong approximation} 

\section{Counting flat cycles}\label{sec:counting}

In this section we complete the proofs of Theorem \ref{thm:SL}--\ref{thm:ROS}.
% These proofs are short, given the groundwork laid in previous sections. 

\subsection{$\SL_{n+1}$ case (Theorem \ref{thm:SL})} \label{sec:SL}

Fix $G,G_1,G_2$ as in \S\ref{sec:flat-homology-SL}. For any prime $p$, let $r=p^k$ for some fixed $k\gg0$  large enough that 
%Theorem \ref{thm:ANP}
Corollary \ref{cor:mrt-SL} applies. For any $\ell>k$, let $s=p^\ell$. The main remaining task is to estimate the sizes of the groups $W(r,s)=\Ga(r)/\Ga(s)$ and $W_i(r,s)=\Ga_i(r)/\Ga_i(s)$ for $i=1,2$ as functions of $\ell$ for infinitely many primes $p$.

\paragraph{Size of $W(r,s)$.} We show that $|W(r,s)|\sim (p^{n^2+2n} )^\ell$.
%\left(\frac{s}{r}\right)^{n^2+2n}$. 
Recall the following short exact sequence (\ref{eqn:congruence-SES}) from \S\ref{sec:geocycles}.
\[1\ra W(r,s)\ra G(\Z)/\Ga(s)\ra G(\Z)/\Ga(r)\ra1,\]
where $G(\Z)/\Ga(t)$ is isomorphic to the image of 
\[\SL_{n+1}(\Z)\ra\SL_{n+1}(\Z/t\Z).\] 
Because $r$ is fixed, it follows that $|W(r,s)| \sim |\SL_{n+1}(\Z/s\Z)|$.

It is well-known that $\SL_{n+1}(\Z)\ra\SL_{n+1}(\Z/t\Z)$ is surjective for each $t$, e.g.\ because these groups are generated by elementary matrices \cite[Thm.\ 4.3.9]{hahn-omeara}. Since $|\SL_{n+1}(\Z/t\Z)|\sim t^{(n+1)^2-1}=t^{n^2+2n}$, we conclude that $|W(r,s)|\sim \left(p^{n^2+2n}\right)^\ell$. %\left(\frac{s}{r}\right)^{n^2+2n}$. 

\paragraph{Size of $W_2(r,s)$.} We show that $|W_2(r,s)|\sim \left(p^{n^2-1}\right)^\ell$ 
%\left(\frac{s}{r}\right)^{n^2-1}$ 
for all but finitely many $p$. %(recall that $r$ and $s$ are powers of a prime $p$). 

Recall that $G_2(\Q)$ is conjugate to the block diagonal subgroup $\GL_n(\Q)\subset\SL_{n+1}(\Q)$. Fix $\ep\in\SL_{n+1}(\Q)$ so that 
\[\ep\> G_2(\Q)\>\ep^{-1}=\GL_n(\Q).\] 
Observe that there is a subgroup $\Lam<G_2(\Z)$ whose index in $G_2(\Z)$ is at most 2 and such that the Zariski closure of $\ep\Lam\ep^{-1}$ is
\[\SL_n=\left\{\left(\begin{array}{cc}A&0\\0&1\end{array}\right)\right\}\sbs\SL_{n+1}(\Q).\]
Specifically, if $\Q^n=P\oplus L$ is the decomposition preserved by $G_2(\Q)$, then $G_2(\Z)$ preserves $L\cap\Z^n\cong\Z$, and we can take $\Lam$ to be the kernel of the homomorphism $G_2(\Z)\ra\Aut(L\cap\Z^n)$. The Zariski closure of $\ep\Lam\ep^{-1}$ is certainly contained in $\SL_n$, and it is equal because $\ep\Lam\ep^{-1}$ contains a finite-index subgroup of $\SL_n(\Z)$.

Let $G_2'$ be the Zariski closure of $\Lambda$. To estimate the size of $W_2(r,s)$, it suffices to compute the image of $G_2'(\Z)\ra G_2'(\Z/t\Z)$ for $t=p^j$. In fact, we will show the following. 

\paragraph{Claim.} For all but finitely many $p$, the map $G_2'(\Z)\ra G_2'(\Z/t\Z)$ is surjective and the group $G_2'(\Z/t\Z)$ is isomorphic to $\SL_n(\Z/t\Z)$. 

From the claim we deduce (similar to the computation for $W(r,s)$) that 
\[|W_2(r,s)|\sim \left(p^{n^2-1}\right)^\ell.\]

\begin{proof}[Proof of Claim]First assume that $p$ does not divide the denominator of any entry of $\ep$ (this excludes finitely many $p$). Then $\ep$ descends to an invertible matrix in $\SL_{n+1}(\Z/t\Z)$. Observe that there is a finite index subgroup $\Lambda'<G_2'(\Z)$ so that $\ep\Lam'\ep^{-1}$ is contained in $\SL_n(\Z)$. Then the images of the maps 
\[\Lambda' \hra G_2'(\Z)\ra G_2'(\Z/t\Z)\>\>\>\text{ and }\>\>\>\ep\Lambda'\ep^{-1} \hra \SL_n(\Z)\ra \SL_n(\Z/t\Z)\] are conjugate in $\SL_{n+1}(\Z/t\Z)$. This proves the claim, since the map on the right is surjective for almost every $p$ by strong approximation \cite[Window 9, Cor.\ 3]{lubotzky-segal}.\footnote{More precisely, strong approximation implies that the inclusion $\ep\Lam\ep^{-1}\hra\SL_n(\Z)\hra\SL_n(\Z_p)$ has dense image. Therefore $\ep\Lam\ep^{-1}$ surjects onto the image of $\SL_n(\Z_p)\ra\SL_n(\Z/t\Z)$, and this latter map is surjective because $\SL_n(\Z)\ra\SL_n(\Z/t\Z)$ is surjective, as observed above.}
%To prove the claim, it remains to show that the map on the right is surjective for almost every prime. For all but finitely many $p$, the inclusion \[\ep\Lam'\ep^{-1}\hra H(\Z)\hra H(\Z_p)\] has dense image by strong approximation \cite[Window 9, Cor.\ 3]{lubotzky-segal}. Therefore, the image of $\ep\Lam\ep^{-1}\ra H(\Z/t\Z)$ agrees with the image of $H(\Z_p)\ra H(\Z/t\Z)$. The latter map is surjective because $\SL_n(\Z)\ra\SL_n(\Z/t\Z)$ is surjective, as observed above.%\footnote{Alternatively, we could deduce that $H'(\Z)\ra H'(\Z/t\Z)$ is surjective apply strong approximation directly to $H'$, and use an appropriate form of Hensel's lemma to deduce that $H'(\Z)\ra H'(\Z/t\Z)$ is surjective.}
%For all but finitely many primes, the latter map is surjective by strong approximation (which implies that the map $\ep\Lam\ep^{-1}\hra H(\Z)\ra H(\Z_p)$ has dense image \cite[Window 9, Cor.\ 3]{lubotzky-segal}) and the fact that $\SL_n(\Z)\ra\SL_n(\Z/t\Z)$ Here $H=\SL_n$, and $H(\Z)\ra H(\Z/t\Z)\cong\SL_n(\Z/t\Z)$ is surjective, as observed above. 
\end{proof}

\paragraph{Size of $W_1(r,s)$.} We show that $|W_1(r,s)|\sim \left(p^{n}\right)^\ell$. 
%\left(\frac{s}{r}\right)^{n}$. 

By the definition of $G_1$, the $R$-points $G_1(R)$ is the centralizer of $A_1$ in $\SL_{n+1}(R)$, for any ring $R$. Let $\chi\in\Z[x]$ be the characteristic polynomial of $A_1$. 

%\begin{lem}\label{lem:polynomials}
%Let $\xi\in\Z[x]$ be a polynomial that is irreducible over $\Q$. Then there exist infinitely many primes $p$ such that $\xi$ is square-free and splits completely over $\Z/p^k\Z$ for each $k\ge1$. 
%\end{lem}

%\begin{proof}[Proof of Lemma \ref{lem:diagonalizable}] 
%To show that $\tau$ is diagonalizable in $\GL_n(\Z/p^k\Z)$, it suffices to show that $\tau$ has a basis of eigenvectors by \cite[Cor.\ 1.3]{richter-wardlaw}. 
%
%Fix $k\ge1$. First we observe that $\chi$ has $n$ distinct eigenvalues in $\Z/p^k\Z$. Indeed, since $\chi$ is square-free and splits completely in $\Z/p\Z$, the same is true in $\Z/p^k\Z$ by Hensel's lemma. Note also that $\chi$ has exactly $n$ roots in $\Z/p^k\Z$, one over each root in $\Z/p\Z$. 
%
%Set $R=\Z/p^k\Z$. Let $\la_1,\ldots,\la_n\in\Z/p^k\Z$ be the roots of $\chi$, and let $E_1,\ldots,E_n\sbs R^n$ be the corresponding eigenspaces. The assumption that $\chi(\la_i)=0$ (and is not merely a zero-divisor) implies that each $E_i$ contains a linearly independent vector $v_i$; see \cite[Lem.\ 2]{richter-wardlaw}. 
%
%Finally, we observe that $v_1,\ldots,v_n$ are linearly independent. First note that if $i\neq j$, then $\la_i-\la_j$ is not a zero-divisor because $\la_i$ and $\la_j$ map to distinct elements in $\Z/p\Z$. Then the standard proof from linear algebra shows that $v_1,\ldots,v_n$ are linearly independent; cf.\ \cite[Lem.\ 3]{richter-wardlaw}. This shows $\tau$ has a basis of eigenvectors, which completes the proof. 
%\end{proof}

By Lemma \ref{lem:diagonalizable}, there are infinitely many primes $p$ so that $\chi$ splits completely in $(\Z/p^j\Z)[x]$ and $A_1$ is diagonalizable in $\GL_{n+1}(\Z/p^j\Z)$ for each $j\ge1$. Fix such a prime $p$, and set $R=\Z/p^j\Z$.
%, where $p$ is one of the infinitely many primes in the statement of Lemma \ref{lem:diagonalizable} applied to $A_1$. 
Since $A_1$ is diagonalizable with distinct eigenvalues, the centralizer of $A_1$ in $\GL_{n+1}(R)$ is conjugate to the group of diagonal matrices; hence this centralizer has size 
\[|(\Z/p^j\Z)^\ti|^{n+1}\sim (p^j)^{n+1}.\]
%\[|G_1(\Z/p^j\Z)|=|(\Z/p^j\Z)^\ti|^n=(p^j-p^{j-1})^n\sim p^{kn}\]

Next we show that $G_1(\Z)$ surjects to the centralizer of $A_1$ in $\SL_{n+1}(\Z/p^j\Z)$. The centralizer of $A_1$ in $M_{n+1}(\Z)$ contains a subring isomorphic to $\Z[t]/(\chi)$ (see \S\ref{sec:flats}). The same statement holds with $\Z$ replaced by $R$. Since $\chi$ splits completely in $R$ by our choice of $p$, there is a ring isomorphism $R[x]/(\chi)\cong R^{n+1}$. Up to conjugation the map $R[x]/(\chi)\ra M_{n+1}(R)$ has image equal to the diagonal matrices. From this we deduce that the invertible elements of $R[x]/(\chi)$ surject onto the diagonal subgroup of $\GL_{n+1}(R)$ and that $G_1(\Z)$ surjects onto the centralizer of $A_1$ in $\SL_{n+1}(\Z/p^j\Z)$. From this we conclude that $|W_1(r,s)|\sim \left(p^{n}\right)^\ell$.
%\left(\frac{s}{r}\right)^n$. 

\paragraph{An expression in terms of volume.} So far, we have shown that 
\[\frac{|W(r,s)|}{|W_1(r,s)|\cdot|W_2(r,s)|}\sim \left(p^{n^2+2n-(n^2-1)-n}\right)^\ell = \left(p^{n+1} \right)^\ell\]
for infinitely many primes $p$.
%\[\frac{|W(r,s)|}{|W_1(r,s)|\cdot|W_2(r,s)|}\sim \left(\frac{s}{r}\right)^{n^2+2n-(n^2-1)-n}=\left(\frac{s}{r}\right)^{n+1}.\]

Since the volume satisfies $\vol(Y(s))=|W(r,s)|\cdot\vol(Y(r))$, we can write $\vol(Y(s)) \sim \left( p^{n^2+2n} \right)^\ell$. Therefore, 
\[\frac{|W(r,s)|}{|W_1(r,s)|\cdot|W_2(r,s)|}
\sim \left(p^{n+1} \right)^\ell
\sim \vol(Y(s))^{\fr{n+1}{n^2+2n}}.\]
Combined with Lemma \ref{lem:mrtbound} and Corollary \ref{cor:mrt-SL}, this proves Theorem \ref{thm:SL}. \qed

\subsection{$R_{F/\Q}(\SL_2)$ case (Theorem \ref{thm:ROS})}

In this section we prove Theorem \ref{thm:ROS} following the same outline as the preceding section. 

Fix $F\subset\R$ a totally real number field. Write $\ca H=\SL_2$, viewed as an algebraic group over $F$, and consider the restriction of scalars $G=R_{F/\Q}(\ca H)$. 
%and set $G=R_{F/\Q}(\SL_2)$. Let $n$ denote the degree of $F$ over $\Q$.

Given a prime ideal $\mf p\sbs\ca O_F$, let $\mf r=\mf p^k$ for some fixed $k\gg0$ large enough that 
%Theorem \ref{thm:zschumme}
Corollary \ref{cor:mrt-ROS} applies. For any $\ell>k$, let $\mf s=\mf p^\ell$. Next we estimate the size of the groups $W(\mf r,\mf s)=\Ga(\mf r)/\Ga(\mf s)$ and $W_i(\mf r,\mf s)=\Ga_i(\mf r)/\Ga_i(\mf s)$ for $i=1,2$ as functions of $\ell$ for infinitely many primes $\mf p$.

\paragraph{Size of $W(\mf r,\mf s)$.} 
First we show that $|W(\mf r,\mf s)|\sim N(\mf s)^3$,
%\left(\frac{N(\mf s)}{N(\mf r)}\right)^3$, 
where $N(\mf t)=|\ca O_F/\mf t|$ is the ideal norm. % (in particular $N(\mf p^j)=N(\mf p)^i=|\ca O_F/\mf p|^i$). 
Similar to (\ref{eqn:congruence-SES}), we have a short exact sequence 
\[1\ra W(\mf r,\mf s)\ra\SL_2(\ca O_F)/\Ga(\mf s)\ra\SL_2(\ca O_F)/\Ga(\mf r)\ra1.\]

By \cite[Thm.\ 4.3.9]{hahn-omeara}, the reduction map $\SL_2(\ca O_F)\ra\SL_2(\ca O_F/\mf t)$ is surjective, so there is a short exact sequence
\begin{equation}\label{eqn:SES-ROS}1\ra\Ga(\mf t)\ra\SL_2(\ca O_F)\ra\SL_2(\ca O_F/\mf t)\ra1.\end{equation}

%\begin{lem}\label{lem:SES-ROS}
%For $\mf t=\mf p^j$, there is a short exact sequence
%\begin{equation}\label{eqn:SES-ROS}1\ra\Ga(\mf t)\ra\SL_2(\ca O_F)\ra\SL_2(\ca O_F/\mf t)\ra1.\end{equation}
%\end{lem}

%The main part of Lemma \ref{lem:SES-ROS} that requires explanation is that the reduction map $\SL_2(\ca O_F)\ra\SL_2(\ca O_F/\mf t)$ is surjective. This holds because $\SL_2(\ca O_F)$ is generated by elementary matrices; this fact is not generally true, but is true for local rings like $\ca O_F/\mf p^j$ reference}\footnote{See e.g.\ discussion of Suslin's theorem in ``Linear algebra over commutative rings" by McDonald. 

Using (\ref{eqn:SES-ROS}) and Proposition \ref{prop:SL-size}, we conclude that 
\[W(\mf r,\mf s)=\fr{|\SL_2(\ca O_F/\mf s)|}{|\SL_2(\ca O_F/\mf r)|}
\sim N(\mf s)^3.
% \sim \left(\frac{N(\mf s)}{N(\mf r)}\right)^3.
\]

\paragraph{Size of $W_i(\mf r,\mf s)$.} 

Recall that $\ca H_i$ is the centralizer of $A_i\in\SL_2(F)$, and we have assumed that $A_1,A_2$ are conjugate in $\SL_2(F)$. As such, the computation of $|W_i(\mf r,\mf s)|$ is essentially the same for $i=1,2$. To simplify exposition, we present the proof for $i=1$ and show that $|W_1(\mf r,\mf s)| \sim N(\mf s)$.

We start by assuming $\mf p$ is not one of the finitely many factors of denominators of entries of $A_1$. This allows us to consider $A_1$ as an element of $\SL_2(\ca O_F/\mf p^k)$ for each $k\ge1$.

%\frac{N(\mf s)}{N(\mf r)}$. 

%Let $\chi\in\ca O_F[x]$ be the characteristic polynomial of $A_1$. Let $E=F[x]/(\chi)$. By Lemma \ref{lem:polynomials}, we may fix a prime $\mf p\sbs\ca O_F$ so that the image $\bar \chi$ of $\chi$ in $(\ca O_F/\mf p)[x]$ is square-free and splits completely. 

By Lemma \ref{lem:diagonalizable}, there are infinitely many prime ideals $\mf p\sbs\ca O_F$ such that the characteristic polynomial of $A_1$ splits completely in $(\ca O_F/\mf p^j)[x]$ and $A_1$ is diagonalizable in $\GL_2(\ca O_F/\mf p^j)$ for each $j\ge1$. Fixing such a prime, the centralizer of $A_1$ in $\GL_{2}(\ca O_F/\mf p^j)$ is conjugate to the group of diagonal matrices, so the centralizer of $A_1$ in $\SL_2(\ca O_F/\mf p^j)$ is isomorphic to $(\ca O_F/\mf p^j)^\ti$. Denoting $\mf t=\mf p^j$, this group has order $N(\mf t)\cdot\fr{N(\mf p)-1}{N(\mf p)}$, cf.\ Equation (\ref{eqn:unit-size}). 

Next we show that $\ca H_1(\ca O_F)$ surjects to the centralizer of $A_1$ in $\SL_2(\ca O_F/\mf p^j)$. The centralizer of $A_1$ in $M_{2}(\ca O_F)$ contains a subring isomorphic to $\ca O_F[x]/(\chi)$, where $\chi$ is the characteristic polynomial. The same statement holds with $\ca O_F$ replaced by $R:=\ca O_F/\mf p^j$. Since $\chi$ splits completely in $R$, there is a ring isomorphism $R[x]/(\chi)\cong R^2$. Up to conjugation the map $R[t]/(\chi)\ra M_2(R)$ has image equal to the diagonal matrices. From this we deduce that the invertible elements of $R[t]/(\chi)$ surject onto the diagonal subgroup of $\GL_2(R)$ and that $\ca H_1(\ca O_F)$ surjects onto the centralizer of $A_1$ in $\SL_2(R)$. 

% From this we conclude that $|W_1(\mf r,\mf s)|\sim \frac{N(\mf s)}{N(\mf r)}$, and also $|W_2(\mf r,\mf s)|\sim \frac{N(\mf s)}{N(\mf r)}$ by the same argument. 

From this we conclude that $|W_1(\mf r,\mf s)|\sim N(\mf s)$. By the same argument, we have $|W_2(\mf r,\mf s)|\sim N(\mf s)$.

\paragraph{An expression in terms of volume.} So far, we have shown that  
\[\frac{|W(\mf r,\mf s)|}{|W_1(\mf r,\mf s)|\cdot|W_2(\mf r,\mf s)|}\sim\fr{N(\mf s)^3}{N(\mf s)\cdot N(\mf s)} = N(\mf s)\]
for infinitely many primes $p$.
%\[\frac{|W(\mf r,\mf s)|}{|W_1(\mf r,\mf s)|\cdot|W_2(\mf r,\mf s)|}\sim\fr{[N(\mf s)/N(\mf r)]^3}{[N(\mf s)/N(\mf r)]\cdot [N(\mf s)/N(\mf r)]}= \frac{N(\mf s)}{N(\mf r)}.\]

Since the volume satisfies $\vol(Y(\mf s))=|W(\mf r,\mf s)|\cdot\vol(Y(\mf r))$, we can write $\vol(Y(s))\sim N(\mf s)^{3}$. Therefore, $\frac{|W(\mf r,\mf s)|}{|W_1(\mf r,\mf s)|\cdot|W_2(\mf r,\mf s)|}\sim\vol(Y(\mf s))^{1/3}$. 
%\[\frac{|W(r,s)|}{|W_1(r,s)|\cdot|W_2(r,s)|}\sim \left(\frac{s}{r}\right)^{n}\sim\vol(Y(s))^{1/n}.\]
Combined with Lemma \ref{lem:mrtbound} and Corollary \ref{cor:mrt-ROS}, this proves Theorem \ref{thm:ROS}. \qed

\subsection{Orthogonal case (Theorem \ref{thm:SO})}\label{sec:SO-proof}

 In this section we prove Theorem \ref{thm:SO}. %following the same outline as the proof of Theorem \ref{thm:SL} in \S\ref{sec:SL}. 
Fix $n\ge2$ and let $G$ denote the algebraic group
%Fix $1\le n\le m$ and a symmetric integer matrix $Q$ such that the bilinear form defined by $Q$ has signature $(n,m)$, i.e.\ $Q$ is equivalent over $\R$ to the form
%\[\left(\begin{array}{cc}I_n&\\&-I_m\end{array}\right).\] Let $G$ denote the algebraic group
\[\SO(Q_n)=\{g\in\SL_{2n}: g^tQ_ng=Q_n\},\]
where $Q_n$ is the matrix defined in (\ref{eqn:hyperbolic-form}). 

Take $G_1,G_2<G$ as in \S\ref{sec:flat-homology-SO}. For any prime $p\equiv 3\pmod{4}$, let $r=p^k$ for some fixed $k\gg 0$ large enough that 
%Theorem \ref{thm:tshishiku} 
Corollary \ref{cor:mrt-SO} applies. For any $\ell>k$, let $s=p^\ell$. Below we estimate the size of the groups $W(r,s)=\Ga(r)/\Ga(s)$ and $W_i(r,s)=\Ga_i(r)/\Ga_i(s)$ for $i=1,2$ as functions of $\ell$ for infinitely many primes $p$.

\paragraph{Size of $W(r,s)$.} 
First show that $|W(r,s)|\sim \left(p^{n^2}\right)^\ell$. Recall the following short exact sequence (\ref{eqn:congruence-SES}) from \S\ref{sec:geocycles}: 
\[1\ra W(r,s)\ra G(\Z)/\Ga(s)\ra G(\Z)/\Ga(r)\ra1.\]
For $t=p^j$, the group $G(\Z)/\Ga(t)$ is isomorphic to the image of 
\[\SO(Q_n;\Z)\ra\SO(Q_n;\Z/t\Z).\] 
By strong approximation and Hensel's lemma, the image of this map contains the group $\Om(Q_n;\Z/t\Z)$ defined in (\ref{eqn:omega}) for almost every prime $p$. This implies that $|W(r,s)|\sim \left( p^{n^2} + 2 p^{n(n-1)/2}\right)^\ell$ by Corollary \ref{cor:omega}. 

Our application of these results is routine, so we only give a brief explanation. Similar to \S\ref{sec:SL}, strong approximation is used to conclude that the image of the inclusion $G(\Z)\hra G(\Z_p)$ is dense. One difference is that the algebraic group $\SO(Q_n)$ is not simply connected, so in order to apply strong approximation, one needs to first pass to the 2-fold spin cover; see \cite[Window 9, \S1 and \S4]{lubotzky-segal} and \cite{pink}. Hensel's lemma in algebraic geometry implies that the reduction map $G(\Z_p)\ra G(\Z/p^j\Z)$ is surjective for each $j\ge1$; see \cite[\S18.5]{grothendieck} and \cite[Thm.\ 4.17]{stulemeijer}. In order to apply Hensel, one needs $G$ to be a smooth $\Z_p$-scheme. The smoothness of $G=\SO(Q_n)$ as a group scheme is discussed in \cite[C.1.5]{conrad}.

\paragraph{Size of $W_2(r,s)$.} For all but finitely many primes $p$ (depending on the choice of the negative vector), the group $G_2(\Z/t\Z)$ is isomorphic to $\SO(\wtil Q_{n-1},\Z/t\Z)$, where $\wtil Q_n$ is defined in (\ref{eqn:form2}). Again by strong approximation, the image of $G_2(\Z)\ra G_2(\Z/t\Z)$ is isomorphic to $\Om(\wtil Q_{n-1};\Z/t\Z)$ for almost every prime $p$. This implies that $|W_2(r,s)|\sim \left( p^{(n-1)^2} + 2 p^{(n-1)(n-2)/2 + p^{n-1}}\right)^\ell$ by Corollary \ref{cor:omega2}.

\paragraph{Size of $W_1(r,s)$.} We show that $|W_1(r,s)|\sim \left(p^n\right)^{\ell}$. 

By the definition of $G_1$, the group $G_1(R)$ is the centralizer of $A_1$ in $\SO(Q_n;R)$, for any ring $R$. Let $\chi\in\Z[x]$ be the characteristic polynomial of $A_1$. 

By Proposition \ref{prop:splitting} and Lemma \ref{lem:diagonalizable-SO}, there are infinitely many primes $p\equiv3\pmod{4}$ so that $\chi$ splits completely in $(\Z/p^j\Z)[x]$ and $A_1$ is diagonalizable in $\SO(Q_n;\Z/p^j\Z)$ for each $j\ge1$. Fix such a prime $p$, and set $R=\Z/p^j\Z$. Since $A_1$ is diagonalizable with distinct eigenvalues, the centralizer of $A_1$ in $\SO(Q_n;R)$ is conjugate to the group of diagonal matrices in $\SO(Q_n;R)$; hence this centralizer has size 
\[|(\Z/p^j\Z)^\ti|^{n}\sim (p^j)^{n}.\]

It remains to show that $G_1(\Z)$ surjects to the centralizer of $A_1$ in $\SO(Q_n;\Z/p^j\Z)$. The centralizer of $A_1$ in $M_{2n}(\Z)$ contains a subring isomorphic to $\Z[t]/(\chi)$ (see \S\ref{sec:flats}). The same statement holds with $\Z$ replaced by $R$. Since $\chi$ splits completely in $R$ by our choice of $p$, there is a ring isomorphism $R[x]/(\chi)\cong R^{n+1}$. Up to conjugation the map $R[x]/(\chi)\ra M_{2n}(R)$ has image equal to the diagonal matrices. From this we deduce that the invertible elements of $R[x]/(\chi)$ surject onto the diagonal subgroup of $\GL_{2n}(R)$ and that $G_1(\Z)$ surjects onto the centralizer of $A_1$ in $\SO(Q_n;\Z/p^j\Z)$. From this we conclude that $|W_1(r,s)|\sim \left(p^n\right)^{\ell}$ (for an isometry, the diagonal entries occur in $(\la,\frac{1}{\la})$ pairs).

\paragraph{An expression in terms of volume.} Combining the above estimates, we have 
\[
\frac{|W(r,s)|}{|W_1(r,s)|\cdot|W_2(r,s)|}
\sim \left(\frac{p^{n^2}+p^{n(n-1)/2}}{p^{n^2-n+1} + 2 p^{(n^2-n+2)/2} + p^{2n-1}}\right)^\ell
% =\left(\frac{s}{r}\right)^{n-1}.
\]
for infinitely many primes $p \equiv 3 \pmod{4}$.

Since the volume satisfies $\vol(Y(s))=|W(r,s)|\cdot\vol(Y(r))$, we can write $\vol(Y(s))\sim \left(p^{n^2}+p^{n(n-1)/2} \right)^{\ell}$
Therefore, 
\[\frac{|W(r,s)|}{|W_1(r,s)|\cdot|W_2(r,s)|}
%\sim \left(\frac{s}{r}\right)^{n-1}\sim\vol(Y(s))^{\fr{1}{n+1}}
\sim \vol(Y(s))^\kappa,\]
where 
\[
\kappa = 1 - \frac{\log(p^{n^2-n+1} + 2 p^{(n^2-n+2)/2} + p^{2n-1})}{\log(p^{n^2}+p^{n(n-1)/2})}.
\]
For given $p$ and $n$, the numerator of the above fraction is bounded above by $\log(3 p^{n^2-n+1})$ while the denominator is bounded below by $\log(p^{n^2})$, so there is an inequality 
\[
\kappa \geq \frac{n-1 - \log_p(3)}{n^2}.
\] 
Combined with Lemma \ref{lem:mrtbound} and Corollary \ref{cor:mrt-SO}, this proves Theorem \ref{thm:SO}. \qed

%\section{The $\SL_n(\R)$ case} 
%
%Let $G=\SL_n$, viewed as an algebraic group over $\Q$. Fix an irreducible matrix $\tau\in\SL_n(\Z)$ with real eigenvalues, and define $G_1<G$ to be its centralizer. Fix $\rho\in\GL_n(\Z)$ with eigenvalues $(-1,\ldots,-1,1)$, and define $G_2<G$ to be its centralizer. 
%
%{\bf Example.} Take 
%\[\rho=\left(\begin{array}{cc}-I&\\&1\end{array}\right).\]
%and take $\tau$ to be a irreducible matrix that's also symmetric. For example, when $n\not\equiv 1\pmod3$, it seems one can take $\tau=A^tA$, where 
%\[A=\left(\begin{array}{ccccc}
%1&1&\cdots&1\\
%0&1&\cdots&1\\
%\vdots&&\ddots&\vdots\\
%0&0&\cdots&1
%\end{array}\right)\]
%The first few examples are
%\[\tau=\left(\begin{array}{cc}2&1\\1&1\end{array}\right)\>\>\>\tau=\left(\begin{array}{ccc}3&2&1\\2&2&1\\1&1&1\end{array}\right)
%%\>\>\>\tau=\left(\begin{array}{cccc}7&4&3&2\\
%%4&3&2&1\\
%%3&2&2&1\\
%%2&1&1&1\end{array}\right)
%\>\>\>\tau=\left(\begin{array}{ccccc}5&4&3&2&1\\
%4&4&3&2&1\\
%3&3&3&2&1\\
%2&2&2&2&1\\
%1&1&1&1&1\end{array}\right)\]
%

\bibliographystyle{amsalpha}
\bibliography{refs}

Daniel Studenmund\\
Department of Mathematical Sciences, Binghamton University\\
\texttt{daniel@math.binghamton.edu}

Bena Tshishiku\\
Department of Mathematics, Brown University\\ 
\texttt{bena\_tshishiku@brown.edu}

\end{document}